\newcommand{\ud}[0]{\,\mathrm{d}}
\newcommand{\abs}[1]{|#1|}
\newcommand{\Babs}[1]{\Big|#1\Big|}
\newcommand{\Norm}[2]{\|#1\|_{#2}}
\newcommand{\BNorm}[2]{\Big\|#1\Big\|_{#2}}
\newcommand{\ave}[1]{\langle #1\rangle}
\newcommand{\loc}[0]{\operatorname{loc}}
\newcommand{\R}{\mathbb{R}}
\newcommand{\C}{\mathbb{C}}
\newcommand{\N}{\mathbb{N}}
\newcommand{\Z}{\mathbb{Z}}
\newcommand{\eps}[0]{\varepsilon}
\newcommand{\Lip}[0]{\operatorname{Lip}}
\newcommand{\lip}[0]{\operatorname{lip}}
\swapnumbers \numberwithin{equation}{section}
\theoremstyle{plain}
\newtheorem{theorem}[equation]{Theorem}
\newtheorem{proposition}[equation]{Proposition}
\newtheorem{corollary}[equation]{Corollary}
\newtheorem{lemma}[equation]{Lemma}
\theoremstyle{definition}
\newtheorem{definition}[equation]{Definition}
\theoremstyle{remark}
\newtheorem{remark}[equation]{Remark}
\newtheorem{example}[equation]{Example}
\begin{document}

%\title[Equivalent Sobolev norms]{Equivalent Sobolev norms on metric spaces}
\title[Characterisations of Sobolev spaces]{Characterisations of Sobolev spaces and constant functions over metric spaces}

% Schatten properties of commutators on metric spaces
% Commutators and Sobolev norms on metric spaces

\author[Tuomas Hyt\"onen \and Riikka Korte]{Tuomas Hyt\"onen \and Riikka Korte}
\address{Aalto University,
Department of Mathematics and Systems Analysis,
P.O. Box 11100, FI-00076 Aalto, Finland}
\email{tuomas.hytonen@aalto.fi}
\email{riikka.korte@aalto.fi}

%\thanks{* Corresponding author}
\thanks{The authors were supported by the Research Council of Finland through projects 346314 (T.H.), 360184 (R.K.), 364208 (T.H.), and 371637 (T.H.).}
%\date{\today}

\keywords{Besov space, fractional Sobolev space, Haj\l{}asz--Sobolev space, metric measure space, Poincar\'e inequality}
\subjclass[2020]{46E36, 42B35}

% 42B20 Singular and oscillatory integrals (Calder\'on-Zygmund, etc.) 
% 42B35 Function spaces arising in harmonic analysis 
% 46E36 Sobolev (and similar kinds of) spaces of functions on metric spaces; analysis on metric spaces
% 47B10 Linear operators belonging to operator ideals (nuclear, p-summing, in the Schatten-von Neumann classes, etc.)
% 47B47 Commutators, derivations, elementary operators, etc.

% 42B25 Maximal functions, Littlewood-Paley theory
% 46B09 Probabilistic methods in Banach space theory
% 46E40 Spaces of vector- and operator-valued functions
% 47A60 Functional calculus
% 47F05 Partial differential operators
% 60G46 Martingales and classical analysis

%\vspace{-0.8cm}

\begin{abstract}
In a doubling metric measure space $(X,\rho,\mu)$ supporting a Poincar\'e inequality, we give a new characterisation of first-order Sobolev spaces by mean oscillations, and extend previous characterisations of constant functions in terms of the finiteness of certain integrals through a new approach. As a key tool of independent potential, we introduce a novel ``macroscopic'' Poincar\'e inequality, whose right-hand side has oscillations of the same form as the left-hand side, but at a smaller macroscopic scale $r\in(0,R)$.

Besides intrinsic interest, these results are motivated by applications to quantitative compactness properties of commutators $[f,T]$ of pointwise multipliers and singular integrals. With pivotal use of the present results, a characterisation of commutator mapping properties, over the same class of general domains $(X,\rho,\mu)$, is obtained in a companion paper.
\end{abstract}

\maketitle

%\vspace{-0.8cm}

\tableofcontents

\section{Introduction}

This investigation arose from a desire to obtain a common framework for an array of recent results dealing with Schatten class $S^p$ properties of commutators
\begin{equation*}
  \phi\mapsto [f,T]\phi:=f T(\phi)-T(f\phi)
\end{equation*}
of pointwise multipliers $f$ and singular integral operators $T$ in a variety of different settings \cite{FLL:23,FLLX,FLMSZ,GLW:23,LXY} beyond the unweighted Euclidean space $\R^d$, where such results go back to \cite{JW:82,RS:NWO}, as well as \cite[Appendix]{CST} in a critical borderline case.

The proofs of such results naturally split into two parts of different spirit:
\begin{enumerate}[\rm(a)]
  \item\label{it:Sp=Osc} An interplay of operators and functions, where
  the Schatten norms of the commutator $[f,T]$ are connected to abstract oscillatory norms of the function $f$.
  \item\label{it:Osc=concrete} Analysis of functions only, where these abstract norms are further characterised in terms of more concrete and familiar function space properties. %This is pure analysis of functions, with no more reference to the singular integral operator $T$.
\end{enumerate}
The aim of this paper is to provide the relevant results of type \eqref{it:Osc=concrete} that, combined with results of type \eqref{it:Sp=Osc} in the companion paper \cite{Hyt:Sp}, give a general characterisation of the Schatten class membership of commutators stated in \cite[Theorem 1.1]{Hyt:Sp}.

More specifically, in a general setting of a metric measure space $(X,\rho,\mu)$, our goal is to provide the essential function space theory underlying the following key phenomena in the Schatten class properties of commutators:
\begin{enumerate}[\rm(1)]
  \item\label{it:cut-off} The {\em cut-off phenomenon} of \cite{JW:82} says that, for a threshold value $d$, we have $[f,T]\in S^p$ for $p\leq d$ if and only if $f$ is constant (and hence $[f,T]=0$).
  \item\label{it:CST,RS} At the critical index $d$, the weak-type Schatten property $[f,T]\in S^{d,\infty}$ is characterised (combining results of \cite{CST,RS:NWO}) by a first-order Sobolev norm of $f$.
\end{enumerate}

Aside from this initial motivation and application, we believe that our results also have independent interest in the context of the following two parallel themes that have been extensively studied both in $\R^d$ and beyond:
\begin{enumerate}[\rm(1)]
  \item\label{it:recConst} Characterisation of constant functions by the finiteness of suitable integral expressions---a theme going back to \cite{Brezis:02} in $\R^d$ and subsequently extended to various settings in \cite{BS:18,BP:03,HKT,KSS,PP:04,RM:12}.
  \item\label{it:DfreeW} Derivative-free characterisations of Sobolev spaces, several versions of which have been explored even in $\R^d$ \cite{AMV:12,BBM:01,BN:06,BSVY,BVY,CST,Frank,LMSZ,Nguyen,Nguyen:08,Nguyen:25}, while extensions and variants over other spaces appear in \cite{BS:18,DLYY,DMS:19,HXZ,LPZ:JGA,LPZ:NA,LXY,Shimizu:25}.
\end{enumerate}

However, rather than just extending these lists, our present characterisations are specifically tailored for the needs of the commutator questions discussed above, and their usefulness is confirmed by their successful application in the companion work \cite{Hyt:Sp}. Our main theorem is the following generalisation of a recent Euclidean result of \cite{Frank} and its Carnot group extension \cite{LXY}; we will say more about the relations to earlier literature after the statement:

\begin{theorem}\label{thm:Rupert}
Let $(X,\rho,\mu)$ be a complete doubling metric measure space supporting a $(1,p)$-Poincar\'e inequality (Definition \ref{def:Poincare}) for some $p\in(1,\infty)$. Let $f\in L^1_{\loc}(\mu)$. Then $f$ belongs to the homogeneous Haj\l{}asz--Sobolev space $\dot M^{1,p}(\mu)$ (Definition \ref{def:Hajlasz}) if and only if $m_f\in L^{p,\infty}(\nu_p)$, where $m_f$ and $\nu_p$ are the function and the measure on $X\times(0,\infty)$ defined by
\begin{equation}\label{eq:mb-nu}
   m_f(x,t):=\fint_{B(x,t)}\abs{f-\ave{f}_{B(x,t)}}\ud\mu,\qquad
  \ud\nu_p(x,t):=\frac{\ud\mu(x)\ud t}{t^{p+1}}.
\end{equation}
In this case, we have
\begin{align}
  \Norm{f}{\dot M^{1,p}(\mu)} \label{eq:CST}
  &\sim\Norm{m_f}{L^{p,\infty}(\nu_p)}:=\sup_{\kappa>0}\kappa\cdot\nu_p(\{m_f>\kappa\})^{\frac 1p} \\
  &\sim\liminf_{\kappa\to0}\kappa\cdot\nu_p(\{m_f>\kappa\})^{\frac 1p}. \label{eq:Rupert}
\end{align}
\end{theorem}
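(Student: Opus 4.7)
I will start with the easy direction
$\Norm{m_f}{L^{p,\infty}(\nu_p)}\lesssim\Norm{f}{\dot M^{1,p}(\mu)}$:
given any Haj\l{}asz gradient $g\in L^p(\mu)$ of $f$, applying the defining inequality $|f(y)-f(z)|\leq\rho(y,z)(g(y)+g(z))$ to $y,z\in B(x,t)$ and averaging twice yields the pointwise bound $m_f(x,t)\lesssim t\,(Mg)(x)$, where $M$ is the uncentred Hardy--Littlewood maximal operator. Slicing $\nu_p(\{m_f>\kappa\})$ in $t$ and substituting $s=\kappa/(Ct)$ turns the $dt/t^{p+1}$ integral into a layer cake for $Mg$; combined with the $L^p(\mu)$-boundedness of $M$ (which uses $p>1$), this gives $\kappa\,\nu_p(\{m_f>\kappa\})^{1/p}\lesssim\Norm{g}{p}$, and infimising over $g$ yields the desired inequality. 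The inequality $\liminf_{\kappa\to 0}\kappa\,\nu_p(\{m_f>\kappa\})^{1/p}\leq\Norm{m_f}{L^{p,\infty}(\nu_p)}$ is immediate from the definitions.

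For the substantive direction $\Norm{f}{\dot M^{1,p}(\mu)}\lesssim A$, where $A:=\liminf_{\kappa\to 0}\kappa\,\nu_p(\{m_f>\kappa\})^{1/p}$, I would begin with the standard telescoping at Lebesgue points $x\neq y$, setting $r:=\rho(x,y)$. Iterating the doubling bound $|\ave{f}_{B(x,2^{-k}r)}-\ave{f}_{B(x,2^{-k-1}r)}|\lesssim m_f(x,2^{-k}r)$, and handling the crossover $|\ave{f}_{B(x,r)}-\ave{f}_{B(y,r)}|$ via inclusion in $B(x,2r)$, produces
\begin{equation*}
  |f(x)-f(y)|\lesssim\sum_{k=-1}^\infty\big(m_f(x,2^{-k}r)+m_f(y,2^{-k}r)\big).
\end{equation*}
A pointwise Haj\l{}asz inequality of the form $|f(x)-f(y)|\lesssim\rho(x,y)(g(x)+g(y))$ would follow if each sum were dominated by $r\,g(\cdot)$ for some $g\in L^p(\mu)$ with $\Norm{g}{p}\lesssim A$. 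However, the naive choice $g(x)=\sup_{t>0}m_f(x,t)/t$ only gives $g\in L^{p,\infty}(\mu)$, since the weak-type information on $m_f$ with respect to $\nu_p$ transfers only to a weak-type bound on the supremum; a term-by-term estimate in the displayed sum then incurs a logarithmic divergence in $k$.

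The device for closing this gap is the \emph{macroscopic Poincar\'e inequality} announced in the abstract. I would use it to replace each term $m_f(x,2^{-k}r)$ by an average, over a slightly enlarged ball of radius $\sim 2^{-k}r$, of $m_f(\cdot,r')/r'$ at a strictly smaller macroscopic scale $r'<2^{-k}r$. Summed across $k$, this produces a candidate gradient of the form $g(x)\approx(M\Phi)(x)^{1/p}$, where $\Phi$ is a scale-integral built from $(m_f/t)^p$, whose $L^p(\mu)$-norm is controlled via the identity $\int_X\Phi\,\ud\mu=\int(m_f/t)^p\,\ud\nu_p$ combined with a Kolmogorov-type truncation exploiting the sequence $\kappa_n\to 0$ realising the liminf. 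The main obstacle is precisely this last step: converting the weak-$L^{p,\infty}(\nu_p)$ datum on $m_f$, which is available only along the chosen sequence $\kappa_n$, into a genuine strong $L^p(\mu)$ estimate for $g$. The macroscopic Poincar\'e inequality is what dampens the scale sum enough to absorb the potential logarithm, and the $L^p$-boundedness of the maximal operator (requiring $p>1$, as assumed) converts the resulting weak-type data into a strong-type bound on the candidate gradient.
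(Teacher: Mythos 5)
Your sketch of the easy direction matches the paper (Proposition~\ref{prop:MpLowerBd}): apply the Haj\l{}asz inequality on $B(x,t)$, average twice to get $m_f(x,t)\lesssim t\,Mh(x)$, and slice in $t$. The observation $\liminf\le\sup$ is also the paper's reduction of~\eqref{eq:Rupert} to one inequality.

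The substantive direction, however, has a genuine gap, and the sketch misreads what the macroscopic Poincar\'e inequality does. The factor in Theorem~\ref{thm:macroPoincare} is $s/r$, which \emph{blows up} as the inner scale $r\to 0$; it does not ``dampen the scale sum.'' More fundamentally, replacing each $m_f(x,2^{-k}r)$ by an averaged quantity at a smaller scale and then summing in $k$ produces an $L^p(\ud\nu_p)$-type scale integral~$\Phi=\int(m_f/t)^p\,\ud\nu_p(\cdot,t)$, whose $\mu$-integral equals $\Norm{m_f}{L^p(\nu_p)}^p$ --- a \emph{strong} $L^p(\nu_p)$ quantity, which is not controlled by the weak $L^{p,\infty}(\nu_p)$ datum. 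No Kolmogorov truncation closes this gap, because Kolmogorov gives $L^q$ control for $q<p$ only, while the Haj\l{}asz gradient must lie in~$L^p$. Incidentally, the intermediate claim that $g(x)=\sup_{t>0}m_f(x,t)/t$ lies in $L^{p,\infty}(\mu)$ does not follow from $m_f\in L^{p,\infty}(\nu_p)$ either: a spike of $m_f(x,t)/t$ at a single scale contributes negligibly to the $\nu_p$-measure of a superlevel set.

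The paper resolves the weak-to-strong passage in a different way, and the key mechanism is completely absent from your sketch. First, one works with the \emph{pointwise liminf} $g_f(x)=\liminf_{t\to0}m_f(x,t)/t$ rather than a sup or a scale integral; Lemma~\ref{lem:gfp<} is a Fatou-type computation showing $\int_X g_f^p\,\ud\mu\le p\liminf_\kappa\kappa^p\nu_p(\{m_f>\kappa\})$, converting weak $L^{p,\infty}(\nu_p)$ into strong $L^p(\mu)$ via the fact that, near a point where $g_f(x)$ is large, $m_f(x,t)>\kappa$ for \emph{all} small $t$, which weights heavily under $\ud t/t^{p+1}$. Second, $g_f$ by itself is not obviously a Haj\l{}asz gradient; the paper proves $\lip f\lesssim g_f$ a.e.\ (Proposition~\ref{prop:Lip-gf}) \emph{only for locally Lipschitz} $f$ --- the Poincar\'e inequality in Definition~\ref{def:Poincare} has $\lip f$ on its right-hand side and cannot even be applied to a raw $L^1_{\loc}$ function. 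Third, to lift the Lipschitz restriction, the paper mollifies: $\Phi_t f$ is locally Lipschitz, and the crucial Lemma~\ref{lem:mPhitf-mf} (which is where the macroscopic Poincar\'e inequality actually enters, via Lemma~\ref{lem:mPhiVsmf}) shows $\Norm{m_{\Phi_t f}}{L^{p,\infty}(\nu_p)}\lesssim\Norm{m_f}{L^{p,\infty}(\nu_p)}$. Finally, Keith--Zhong self-improvement to $(1,p-\eps)$-Poincar\'e is used to run a weak-compactness argument on $(\lip\Phi_{t_j}f)^{p-\eps}$ in $L^{p/(p-\eps)}(\mu)$, extracting a genuine $L^p$ gradient in the limit. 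None of these four steps --- the liminf rather than a scale integral, the a priori Lipschitz restriction, the mollification with the exotic-norm stability estimate, and the self-improvement/weak-compactness step --- appears in your proposal, and the first two are exactly what your approach would need and does not have.
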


We have formulated Theorem \ref{thm:Rupert} in terms of the Haj\l{}asz--Sobolev space $\dot M^{1,p}(\mu)$ of \cite{Hajlasz:96}, since this turns out to be the most convenient definition to work with for the present purposes. However, under the assumptions of Theorem \ref{thm:Rupert}, this space is known to be equivalent so several other established notions of Sobolev spaces over a metric measure space; see Remark \ref{rem:SobolevSpaces} for details.

The norm comparison \eqref{eq:CST} is an exact analogue of the corresponding results in $\R^d$ \cite[Theorem 1.1]{Frank} and Carnot groups \cite[Theorem A.3]{LXY}, but its proof will require substantial departures from \cite{Frank,LXY} in the lack of both group structure and higher-order smoothness. See Examples \ref{ex:HajlaszRd} and \ref{ex:Carnot} for the coincidence of $\dot M^{1,p}(\mu)$ with the usual Sobolev spaces in the respective settings. On the other hand, our limit relation \eqref{eq:Rupert} is slightly weaker than the corresponding identity
\begin{equation*}
  \Norm{f}{\dot W^{1,p}(\R^d)}=c_{d,p}\lim_{\kappa\to 0}\kappa\cdot\nu_p(\{ m_f>\kappa\})^{\frac1p}
\end{equation*}
proved in \cite[Eq.\ (1.2)]{Frank}, where $c_{d,p}$ is an explicit constant, and the existence of the limit is part of the result. In the lack of the symmetries available in $\R^d$, we believe that \eqref{eq:Rupert} is the best substitute that one can expect in our generality.

Another previous result in the spirit of Theorem \ref{thm:Rupert} is \cite[Theorem 1.4]{BS:18}: in addition to our assumptions, it requires the space to be compact and Ahlfors $d$-regular, and only deals with integrability $p=d$ matching the homogeneous dimension of the space---in the spirit of \cite{CST}, which imposed the same assumption on $\R^d$.

In contrast to the recent results of \cite{DLYY,HXZ} (which extend a different characterisation of the Sobolev space from \cite{BSVY,BVY}), we would like to stress that Theorem \ref{thm:Rupert} characterises the {\em whole space} $\dot M^{1,p}(\mu)$, rather than just giving an {\em equivalent norm of test functions} that are already known to be in the space. Indeed, some of the key challenges in the proof are about showing that the finiteness of the oscillatory norm $\Norm{m_f}{L^{p,\infty}(\nu_p)}$ ensures that $f\in\dot M^{1,p}(\mu)$, without  {\em a priori} knowing whether $f$ belongs to this space.

Concerning the characterisation of constants, our second main result is the following, where we denote $V(x,y):=\mu(B(x,\rho(x,y)))$ for all $x,y\in X$.
%\begin{equation*}
%  V(x,r):=\mu(B(x,r)),\qquad V(x,y):=V(x,\rho(x,y))\sim V(y,x)
%\end{equation*}
%for all $x,y\in X$ and $r>0$, and the last equivalence is immediate from the doubling condition.

\begin{theorem}\label{thm:const-intro}
Let $(X,\rho,\mu)$ be a doubling metric measure space. For $p,s\in(0,\infty)$, consider the following two variants of homogeneous Besov spaces:
\begin{equation*}
\begin{split}
  \dot B^s_{p,p}(\mu) &:=\Big\{ f\in L^p_{\loc}(\mu): \iint_{X\times X}\Big(\frac{\abs{f(x)-f(y)}}{\rho(x,y)^s}\Big)^p\frac{\ud\mu(x)\ud\mu(y)}{V(x,y)}<\infty\Big\}, \\
  \dot B^p(\mu) &:=\Big\{ f\in L^p_{\loc}(\mu): \iint_{X\times X}\abs{f(x)-f(y)}^p\frac{\ud\mu(x)\ud\mu(y)}{V(x,y)^2}<\infty\Big\}.
\end{split}
\end{equation*}
Let $d\in[1,\infty)$, and suppose that $(X,\rho,\mu)$ satisfies a $(1,d)$-Poincar\'e inequality.
\begin{enumerate}[\rm(1)]
  \item\label{it:B1dd=const} Then $\dot B^1_{d,d}(\mu)=\{\operatorname{constants}\}$.
  \item\label{it:Bp=const} If, in addition, $(X,\rho,\mu)$ has lower dimension $d$ (Definition \ref{def:dims}), then
\begin{equation*}
   \dot B^p(\mu)=\{\operatorname{constants}\}\qquad\forall\ p\in(0,d].
\end{equation*}
\end{enumerate}
\end{theorem}

We would like to stress that Theorem \ref{thm:const-intro}, in contrast to Theorem \ref{thm:Rupert}, does not assume the completeness of $X$. This reflects the relatively simpler considerations that enter into its proof.

It seems that $\dot B^s_{p,p}(\mu)$ is the most relevant version of a Besov (or fractional Sobolev) space over a metric measure space for many purposes \cite{BB:23,BBS:22,GKS:24,GKS:10}, but the variant $\dot B^p(\mu)$ is one that naturally arises in the context of commutators \cite{FLLX,Hyt:Sp}. If $X$ is Ahlfors $d$-regular, thus $V(x,y)\sim\rho(x,y)^d$, then $\dot B^p(\mu)=\dot B^{d/p}_{p,p}(\mu)$---consistent with the role of $\dot B^{d/p}_{p,p}(\R^d)$ in classical commutator results of \cite{JW:82}. In this case, the condition $p\leq d$ means that $s=d/p\geq 1$, reflecting the fact that $\dot B^s_{p,p}$ above is not the right definition of the Besov space on $\R^d$ for smoothness $s\geq 1$.

 Case \eqref{it:B1dd=const} of Theorem \ref{thm:const-intro} is recently due to \cite[Corollary 1.8]{KSS}, but we provide a different approach of potentially independent interest, being relatively direct in using only the Besov spaces as defined above, while the proof of \cite[Corollary 1.8]{KSS}---quoting \cite[Theorem 5.1]{Baudoin:24}, \cite[Theorem 10.5.2]{HKST}, and \cite[Theorem 1.6]{KSS}---passes through the theory of Korevaar--Schoen spaces $KS^{1,p}$. In fact, we prove a slightly stronger local version of case \eqref{it:B1dd=const} in Proposition~\ref{prop:constBesov}.

On the other hand, case \eqref{it:Bp=const} is a new result to our knowledge, and it is this variant that is relevant for commutator applications, both in the companion paper \cite{Hyt:Sp} and in the recent \cite{FLLX}: In the specific ``Bessel setting'' $(X,\rho,\mu)=(\R_+^{n+1},\abs{x-y},x_{n+1}^{2\lambda}\ud x)$ with $d=n+1$, \cite[Proposition 6.4]{FLLX} obtain a version of case \eqref{it:Bp=const} in the weaker form
\begin{equation*}
  C^2(\R_+^{n+1})\cap\dot B^p(x_{n+1}^{2\lambda}\ud x)=\{\operatorname{constants}\}\qquad\forall\ p\in(0,n+1].
\end{equation*}
The fact that our Theorem \ref{thm:const-intro} leads to an improvement of this previous result, even when specialised to such a concrete situation, illustrates the power of our abstract methods. See Remark \ref{rem:Bessel} below for further elaboration on this. Again, we also obtain a slightly stronger local variant of case \eqref{it:Bp=const} in Corollary \ref{cor:constBesov}.

Both Theorems \ref{thm:Rupert} and \ref{thm:const-intro} depend on the (by now usual, cf.~\cite{Heinonen:book,HKST})  assumption of a Poincar\'e inequality, which we recall in Definition \ref{def:Poincare}. Roughly speaking, this inequality says that oscillations of a function on a macroscopic scale are controlled by ones on the infinitesimal scale. As a key intermediate step between this assumption and Theorems \ref{thm:Rupert} and \ref{thm:const-intro}, we establish a new ``macroscopic Poincar\'e inequality'': oscillations on a larger macroscopic scale are controlled by ones on a smaller macroscopic scale, uniformly over the two scales. Formulated in Theorem \ref{thm:macroPoincare}, this is our third main result, which should have independent interest and other applications elsewhere.

%While related ideas in connection with other equivalent norms of the fractional Sobolev space can be traced much further back (cf.\ \cite{Stein:61,Str:67}), in the explicit form as above, the two lines of study \eqref{it:DfreeW} and \eqref{it:recConst} go back to \cite{BBM:01} (which is also the source of the two short quotes above) and \cite{Brezis:02}, respectively.

\subsection{Plan of the paper and an overview of the argument}

In the following Section \ref{sec:prelim}, we collect some preliminaries, including the definitions omitted in this Introduction.
In Section \ref{sec:macroPoincare}, we present our key new tool of the ``macroscopic'' Poincar\'e inequality in Theorem \ref{thm:macroPoincare}. With this result, we obtain a rather quick proof of Theorem \ref{thm:const-intro} on the characterisation of constant functions in Section \ref{sec:constants}.

The rest of the paper is dedicated to the proof of our Main Theorem \ref{thm:Rupert} on the characterisation of the Sobolev space. The rough outline of our proof is adapted from the Euclidean argument of \cite{Frank}. In particular, the lower bound of the Sobolev norm (i.e., ``$\gtrsim$'' in \eqref{eq:CST}) is obtained in Section~\ref{sec:MpLowerBd} by a straightforward adaptation of \cite{Frank}. However, the other direction is much more challenging, and we need to develop substantial new methods.
Besides the macroscopic Poincar\'e inequality, which also features in this proof, we establish some delicate comparison of maximal and mean oscillations at the infinitesimal scale in Section \ref{sec:Lip-gf}, to substitute tools like Taylor expansions, which are not available in our setting. However, we only obtain this comparison for Lipschitz functions. To lift this {\em a priori} assumption, we need to be able to approximate general functions by Lipschitz functions with respect to the ``exotic'' norm $f\mapsto\Norm{m_f}{L^{p,\infty}(\nu_p)}$ featuring in the desired characterisation. (This approximation follows easily from Theorem \ref{thm:Rupert} {\em a posteriori}, once we know that the exotic norm is equivalent to $\Norm{f}{\dot M^{1,p}(\mu)}$; but we need it {\em a priori}, in order to prove this equivalence!) This is accomplished in Section \ref{sec:LipApprox} with the help of the macroscopic Poincar\'e inequality, this time serving to substitute some convolution tricks, which are easy in~$\R^d$ \cite{Frank}, and also on Carnot groups \cite{LXY}, but unavailable in our setting. After these preparations, we can complete the proof of the upper bound of the Sobolev norm (``$\lesssim$'' in \eqref{eq:CST}) in Section~\ref{sec:MpUpperBd} and the limit relation \eqref{eq:Rupert} in Section~\ref{sec:limit}.

In the final Section \ref{sec:final}, we make some complementary remarks on different Poincar\'e inequalities. While not essential for our main results, these arise as natural by-products, and may have some independent interest.

\section{Preliminaries}\label{sec:prelim}

We will work over a metric measure space $(X,\rho,\mu)$. As usual, we write $A\lesssim B$, if $A\leq c\cdot B$ for some constant $c$ that only depends on the space $(X,\rho,\mu)$ and some related fixed parameters, but never on individual points $x\in X$, functions $f$, or similar variable quantities. We write $A\sim B$ if both $A\lesssim B$ and $B\lesssim A$.

We always assume the doubling condition
\begin{equation*}
   V(x,2r)\leq c_\mu V(x,r),\qquad V(x,r):=\mu(B(x,r)),
\end{equation*}
for all $x\in X$ and $r>0$. For $x,y\in X$, we also denote
\begin{equation*}
  V(x,y):=V(x,\rho(x,y))\sim V(y,x),
\end{equation*}
where the last equivalence is immediate from the doubling condition.

\begin{definition}\label{def:dims}
A metric measure space $(X,\rho,\mu)$ is said to have {\em lower dimension $d$} and {\em upper dimension $D$} if
\begin{equation*}%\label{eq:dD}
   \Big(\frac{R}{r}\Big)^d\lesssim \frac{V(x,R)}{V(x,r)}\lesssim\Big(\frac{R}{r}\Big)^D
\end{equation*}
for all $x\in X$ and $0<r\leq R<2\operatorname{diam}(X)$. The space is said to be {\em Ahlfors $d$-regular} if it has lower and upper dimension $d=D$.
\end{definition}

Every doubling space has upper dimension $D\leq\log_2 c_\mu$, but possibly much lower than this simple upper bound.
While not a property of every doubling space, a lower dimension $d>0$ readily follows in the presence of a Poincar\'e inequality (Definition \ref{def:Poincare}), which we assume in our main results: a space supporting a Poincar\'e inequality is connected by \cite[Proposition 4.2]{BB:book}, and a connected doubling space has lower dimension $d>0$ by \cite[Corollary 3.8]{BB:book}.

\begin{remark}\label{rem:Ahlfors}
The above definition of Ahlfors $d$-regular is equivalent to the usual one: a space if Ahlfors $d$-regular if and only if $V(x,r)\sim r^d$ for all $x\in X$ and $0<r<2\operatorname{diam}(X)$.
\end{remark}

\begin{proof}
``If'' is clear. For ``only if'', suppose that $V(x,R)/V(x,r)\sim (R/r)^d$ for all $x\in X$ and $0<r\leq R<2\operatorname{diam}(X)$. It is immediate that this implies doubling, and hence $V(x,R)\sim V(y,R)$ for $R\geq\rho(x,y)$. Given $x,y\in X$ and $0<r<2\operatorname{diam}(X)$, we can choose $R\in[r,2\operatorname{diam}(X))$ such that $R\sim r+\rho(x,y)$ to obtain
\begin{equation*}
  V(x,r)\sim\Big(\frac{r}{R}\Big)^d V(x,R)
  \sim\Big(\frac{r}{R}\Big)^d V(y,R)\sim V(y,r).
\end{equation*}
Fixing some $x_0\in X$, it follows that
\begin{equation*}
  V(x,r)\sim V(x_0,r)\sim\Big(\frac{r}{1}\Big)^d V(x_0,1)\sim r^d.
\end{equation*}
\end{proof}

When speaking of {\em local} properties of functions on a metric measure space $(X,\rho,\mu)$, we mean properties of their restrictions on any ball $B=B(x,r)$ with $x\in X$ and $r\in(0,\infty)$. In particular, $L^1_{\loc}(\mu)$ consists of all measurable $f$ such that $1_B f\in L^1(\mu)$ for every ball $B$, and $\Lip_{\loc}$ consists of all functions $f$ whose restriction $f|_B$ is a Lipschitz function (as defined right below) for every ball~$B$.

\subsection{Lipschitz functions and the Poincar\'e inequality}

For a function $f$ on $X$, its Lipschitz (semi-)norm is the quantity
\begin{equation*}
  \Norm{f}{\Lip}:=\sup_{\substack{x,y\in X\\ x\neq y}} \frac{\abs{f(x)-f(y)}}{\rho(x,y)},
\end{equation*}
and $f$ is said to be Lipschitz continuous, or a Lipschitz function, if $\Norm{f}{\Lip}<\infty$.

The pointwise Lipschitz constant of $f$ at $x\in X$ is defined by
\begin{equation}\label{eq:lipf}
  \operatorname{lip}f(x):=\liminf_{t\to 0}\sup_{\rho(x,y)\leq t}\frac{\abs{f(x)-f(y)}}{t}.
\end{equation}

A key assumption in our main results is the following:

\begin{definition}\label{def:Poincare}
For $p\in[1,\infty)$, we say that $(X,\rho,\mu)$ supports a $(1,p)$-Poincar\'e inequality if, for some constants $\lambda\in[1,\infty)$ and $c_P\in(0,\infty)$, we have
\begin{equation}\label{eq:Poincare}
   \fint_{B(z,r)}\abs{f-\ave{f}_{B(z,r)}}\ud\mu
   \leq c_P\cdot r\cdot\Big(\fint_{B(z,\lambda r)}(\operatorname{lip}f)^p\ud\mu\Big)^{\frac1p},
\end{equation}
for all $z\in X$ and $r>0$, and all Lipschitz functions $f$ on $X$.
\end{definition}

For the sake of comparison, we relate the notion of Poincar\'e inequality given in Definition \ref{def:Poincare} to some other variants appearing in the literature.

\begin{lemma}\label{lem:Poincares}
Let $(X,\rho,\mu)$ be a complete doubling metric measure space and $p\in[1,\infty)$. Consider the Poincar\'e inequality
\begin{equation}\label{eq:uPoincare}
  \fint_{B(x,r)}\abs{f-\ave{f}_{B(x,r)}}\ud\mu\lesssim r\Big(\fint_{B(x,\lambda r)}g^p\ud\mu\Big)^{\frac1p}
\end{equation}
for pairs of functions $(f,g)$. Then the following statements---each to hold for every Lipschitz function $f$ on $X$, and $g$ as specified below---are equivalent:
\begin{enumerate}[\rm(1)]
  \item\label{it:lipf} \eqref{eq:uPoincare} holds for $g(x)=\operatorname{lip}f(x)$ as defined in \eqref{eq:lipf}.
  \item\label{it:Lipf} \eqref{eq:uPoincare} holds for 
$\displaystyle  g(x)=\operatorname{Lip}f(x):=\limsup_{r\to 0}\sup_{0<\rho(x,y)\leq r}\frac{\abs{f(x)-f(y)}}{\rho(x,y)}.$
  \item\label{it:itLipf} \eqref{eq:uPoincare} holds for 
$\displaystyle  g(x)=\operatorname{\it Lip}f(x):=\liminf_{r\to 0}\sup_{\rho(x,y)=r}\frac{\abs{f(x)-f(y)}}{r}.$
  \item\label{it:uppergrad} \eqref{eq:uPoincare} holds whenever $g$ is an upper gradient of $f$.
\end{enumerate}
\end{lemma}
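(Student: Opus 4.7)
\emph{Proof plan.} The plan is to first record the trivial pointwise comparisons between the three Lipschitz-type quantities and then reduce the substantive direction of the equivalence to Keith's theorem for complete doubling metric measure spaces.

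The pointwise bounds
\[
   \itLip f(x)\leq \lip f(x)\leq \Lip f(x),\qquad x\in X,
\]
hold for every function $f$. The first inequality follows from the inclusion $\{y:\rho(x,y)=r\}\subseteq\{y:\rho(x,y)\leq r\}$ with the common denominator $r$; the second from $\sup_{\rho(x,y)\leq t}\abs{f(x)-f(y)}/t\leq\sup_{0<\rho(x,y)\leq t}\abs{f(x)-f(y)}/\rho(x,y)$ combined with $\liminf\leq\limsup$. Since the right-hand side of \eqref{eq:uPoincare} is monotone in $g$ (a smaller $g$ yields a stronger statement), I immediately obtain the chain of implications $(3)\Rightarrow(1)\Rightarrow(2)$.

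For the implication $(4)\Rightarrow(2)$, I would use that under any of the standing Poincar\'e hypotheses the space $(X,\rho)$ is quasiconvex (cf.~\cite[Chapter~8]{HKST}); in a quasiconvex space $\Lip f$ is an upper gradient of any Lipschitz $f$, obtained by integrating $\Lip f$ along rectifiable curves joining pairs of points. Specialising $g=\Lip f$ in $(4)$ then yields $(2)$. For the converse direction $(2)\Rightarrow(4)$, the central ingredient is Keith's self-improvement theorem for complete doubling metric measure spaces: validity of the $(1,p)$-Poincar\'e inequality for Lipschitz functions with $\Lip f$ on the right-hand side upgrades to the same inequality with the minimal $p$-weak upper gradient $g_f$ on the right, and hence with every upper gradient $g\geq g_f$. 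Combined with the a.e.\ identification $\lip f=\Lip f$ for Lipschitz $f$ valid in PI spaces (Cheeger--Keith differentiability theory), one recovers $(1)$; a variant of the argument exploiting quasiconvexity and the presence of points on small spheres around $\mu$-a.e.~$x$ then upgrades this to $\itLip f=\Lip f$ $\mu$-a.e., giving $(3)$.

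The main obstacle is precisely the Keith step: passing from a Poincar\'e inequality controlled by the pointwise liminf/limsup quantity $\Lip f$ to one controlled by the full geometric minimal upper gradient requires a delicate telescoping-and-maximal-function argument exploiting both completeness and doubling, since the a priori asymmetry between liminf/limsup notions of pointwise gradients cannot be broken by pointwise comparison alone. In writing the paper I would simply cite the corresponding theorem from \cite{HKST} and indicate the trivial pointwise inequalities above for the easy directions.
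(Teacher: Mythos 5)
Your easy chain $(3)\Rightarrow(1)\Rightarrow(2)$ via $\itLip f\leq\lip f\leq\Lip f$ and the step $(2)\Rightarrow(4)$ via Keith's theorem are exactly what the paper does. The problem is in how you close the cycle back to $(3)$.

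After establishing $(3)\Rightarrow(1)\Rightarrow(2)\Rightarrow(4)$, what you need is a single implication into $(3)$. The paper's route is very short: by \cite[Proposition 1.11]{Cheeger}, $\itLip f$ is itself an upper gradient of the Lipschitz function $f$, so $(4)$ applied to $g=\itLip f$ immediately gives $(3)$. Your proposal instead invokes an a.e.\ identity $\itLip f=\Lip f$ as a consequence of ``quasiconvexity and the presence of points on small spheres around $\mu$-a.e.\ $x$''. This is where the gap is: in a general doubling metric space, the sphere $\{y:\rho(x,y)=r\}$ can be empty for many $r$ and many $x$, so it is not clear that such an identity holds, and you give no reference or argument for it. The a.e.\ identity $\lip f=\Lip f$ from Cheeger--Keith differentiability is a real theorem in PI spaces, but the corresponding statement for $\itLip f$ (defined over spheres rather than balls or punctured balls) is a different matter, and you would have to justify it from scratch. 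Note also that since Cheeger's result already makes $\itLip f$ an upper gradient, this identity --- even if true --- is a far heavier tool than needed.

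A couple of secondary remarks: your step $(4)\Rightarrow(2)$ is logically redundant once the cycle $(3)\Rightarrow(1)\Rightarrow(2)\Rightarrow(4)\Rightarrow(3)$ is closed, so it is extra work; and the appeal to quasiconvexity there is a red herring, since $\Lip f$ is an upper gradient of a Lipschitz function $f$ in \emph{any} metric space (integrate $|(f\circ\gamma)'|\leq\Lip f\circ\gamma$ along an arbitrary rectifiable curve $\gamma$ --- no curve-richness hypothesis is involved in the \emph{definition} of an upper gradient).
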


\begin{proof}
Since $\operatorname{\it Lip}f\leq\operatorname{lip}f\leq\operatorname{Lip}f$, the implications \eqref{it:itLipf}$\Rightarrow$\eqref{it:lipf}$\Rightarrow$\eqref{it:Lipf} are obvious. The implication \eqref{it:Lipf}$\Rightarrow$\eqref{it:uppergrad} is part of \cite[Theorem 2]{Keith:03}.
By \cite[Proposition 1.11]{Cheeger}, the function $\operatorname{\it Lip}f$ is an upper gradient of a Lipschitz function $f$. Hence \eqref{it:uppergrad}$\Rightarrow$\eqref{it:itLipf}, and this closes the circle of implications.
\end{proof}

We also make use of the following fundamental result.

\begin{theorem}[\cite{KZ:08}, Theorem 1.0.1]\label{thm:KZ}
Let $(X,\rho,\mu)$ be a complete doubling metric measure space and $p\in(1,\infty)$.
Suppose that $(X,\rho,\mu)$ satisfies the $(1,p)$-Poincar\'e inequality in the sense of Definition \ref{def:Poincare}. Then it satisfies the $(1,p-\eps)$-Poincar\'e inequality for some $\eps>0$, in the same sense.
\end{theorem}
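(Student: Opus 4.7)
The strategy I would pursue is to reduce the self-improvement of the Poincar\'e inequality to a Gehring-type self-improvement of a maximal-function inequality, via a pointwise Haj\l{}asz-type characterisation. By Lemma~\ref{lem:Poincares} I may take $g=\lip f$ in~\eqref{eq:uPoincare}. Telescoping the $(1,p)$-Poincar\'e inequality along a chain of balls $B(x,2^{-k}r)$ shrinking to a Lebesgue point $x$---the standard chaining argument in analysis on metric spaces---would give the pointwise estimate
\begin{equation*}
  \abs{f(x)-f(y)}\lesssim\rho(x,y)\bigl[(\mathcal{M}g^p)(x)^{1/p}+(\mathcal{M}g^p)(y)^{1/p}\bigr]
\end{equation*}
for $\mu$-a.e.\ $x,y$, where $\mathcal{M}$ is the Hardy--Littlewood maximal operator on $(X,\rho,\mu)$. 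Conversely, averaging such an estimate over $B(z,r)\times B(z,r)$ recovers a Poincar\'e inequality (with possibly enlarged dilation constant $\lambda$). Thus it suffices to improve the pointwise inequality from the exponent $p$ down to some $p-\eps$.

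The key tool enabling any gain in exponent is the truncation property of upper gradients: for each $\alpha>0$, the function $g\mathbf{1}_{\{g\le\alpha\}}+\alpha\mathbf{1}_{\{g>\alpha\}}$ is an upper gradient of the Lipschitz truncation of $f$ at level $\alpha$. This metric-space analogue of a Calder\'on--Zygmund decomposition applied at the level of the gradient, rather than of the function itself, is the decisive ingredient absent from simpler approaches.

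I would then perform a Whitney-type decomposition of the level set $\{\mathcal{M}g^p>\lambda\}$ and, on each Whitney ball, apply the Poincar\'e inequality with the gradient truncated at a threshold synchronised to the Whitney scale. Assembling these local estimates, I would aim for a good-$\lambda$ inequality
\begin{equation*}
  \mu(\{\mathcal{M}g^p>\beta\lambda\})\le C\beta^{-(1+\delta)}\mu(\{\mathcal{M}g^p>\lambda\})+\text{error}
\end{equation*}
with a strict power gain $\delta>0$ depending only on $p$ and on the doubling and Poincar\'e constants. Integrating such an estimate in the spirit of Gehring's lemma would produce a reverse-H\"older-type improvement of the integrability of $\mathcal{M}g^p$, which, on unwinding the pointwise reformulation, yields the $(1,p-\eps)$-Poincar\'e inequality.

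The hard part is extracting the strictly positive gain $\delta$. This demands a tightly balanced interplay: the Whitney geometry must be compatible (through doubling) with the level sets of $\mathcal{M}g^p$; the truncation threshold must be matched to the Whitney scale; and the Poincar\'e inequality must be invoked at precisely the correct radius on each Whitney ball. This intricate iteration---which is why the naive strategies (applying Gehring directly to $g$, or arguing only with untruncated gradients) fail to yield any improvement---is the technical heart of~\cite{KZ:08}, and I would expect my attempt to mirror their delicate combinatorial scheme in its essentials rather than to bypass it.
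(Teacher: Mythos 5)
The paper does not re-prove the Keith--Zhong theorem: the statement is explicitly attributed to \cite{KZ:08}, and the paper's proof consists of a single short paragraph whose entire content is definitional bookkeeping. Specifically, \cite{KZ:08} phrases the $(1,p)$-Poincar\'e inequality with the upper gradient substitute $\operatorname{Lip}f$ (Lemma~\ref{lem:Poincares}\eqref{it:Lipf}), whereas Definition~\ref{def:Poincare} uses $\operatorname{lip}f$ (Lemma~\ref{lem:Poincares}\eqref{it:lipf}). The paper's proof simply invokes Lemma~\ref{lem:Poincares}, which shows these formulations (together with the $\operatorname{\textit{Lip}}f$ and upper-gradient versions) are all equivalent, to bridge the two, and then cites \cite[Theorem~1.0.1]{KZ:08} as a black box. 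Your proposal addresses none of this and instead attempts to sketch the internal mechanism of Keith--Zhong itself.

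As a high-level description of \cite{KZ:08}, your outline captures some genuine ingredients (truncation of upper gradients, a Whitney/stopping-time decomposition, a good-$\lambda$/Gehring-type self-improvement), and you candidly acknowledge you would end up reproducing the intricacies of that paper. But there are also imprecisions: the step ``averaging the pointwise estimate recovers a Poincar\'e inequality'' does not in fact return the $(1,p)$-inequality, since the $L^1$-average of $(\mathcal{M}g^p)^{1/p}$ is not controlled by $(\fint g^p)^{1/p}$ without extra integrability (the maximal operator is only weak-type $(1,1)$); Keith--Zhong's argument does not pass through this na\"ive Haj\l{}asz reformulation precisely because of this obstruction. More fundamentally, re-proving Keith--Zhong was not the task the paper's proof set itself: the proof you should have matched is a two-sentence translation between equivalent formulations of the hypothesis. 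Including that translation (Lemma~\ref{lem:Poincares} plus the citation) would have been both sufficient and what the paper actually does.
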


\begin{proof}
As indicated, this was proved in \cite[Theorem 1.0.1]{KZ:08}, but under a definition of the Poincar\'e inequality as in Lemma \ref{lem:Poincares}\eqref{it:Lipf} (see \cite[Definition 2.2.1]{KZ:08}). By Lemma \ref{lem:Poincares}, this is equivalent to any of the other definitions listed in the lemma, and in particular to the version of Lemma \ref{lem:Poincares}\eqref{it:lipf} that was used in our Definition \ref{def:Poincare}.
\end{proof}

Various examples of spaces supporting a Poincar\'e inequality can be found in \cite{BBG}, \cite[Section 14.2]{HKST}, and \cite[page 274]{Keith:04}. 

\begin{remark}\label{rem:KZ}
It is only through the application of Theorem \ref{thm:KZ} that the assumption of completeness enters into our considerations. An interested reader can verify that each occurrence of the assumption of ``a complete space with a $(1,p)$-Poincar\'e inequality'' could be replaced by assuming ``a $(1,p-\eps)$-Poincar\'e inequality for some $\eps>0$'' throughout the paper. Note that this only concerns Theorem \ref{thm:Rupert} on the characterisation of the Sobolev space, while Theorem \ref{thm:const-intro} on the characterisation of constants is already independent of completeness as stated.
\end{remark}

\subsection{Haj\l{}asz gradients and the space $\dot M^{1,p}(\mu)$}

While several meaningful notions of a first-order Sobolev space over a metric measure space are known by now, the most straightforward definition is probably that of Haj\l{}asz \cite{Hajlasz:96}:

\begin{definition}\label{def:Hajlasz}
A measurable function $h$ is a {\em Haj\l{}asz upper gradient} of a measurable function $f$ if
\begin{equation}\label{eq:HajlaszUpper}
   \abs{f(x)-f(y)}\leq(h(x)+h(y))\rho(x,y),\qquad\text{for $\mu$-a.e. }x,y\in X.
\end{equation}
The homogeneous Haj\l{}asz--Sobolev norm of $f$ is then
\begin{equation*}
   \Norm{f}{\dot M^{1,p}(\mu)}
  :=\inf\Big\{\Norm{h}{L^p(\mu)}: h\text{ satisfies \eqref{eq:HajlaszUpper}}\Big\},
\end{equation*}
and $\dot M^{1,p}(\mu)$ is the class of all measurable $f$ with $\Norm{f}{\dot M^{1,p}(\mu)}<\infty$.
\end{definition}

\begin{example}\label{ex:HajlaszRd}
By \cite[Theorem 1]{Hajlasz:96}, we have
\begin{equation*}
    \dot M^{1,p}(\R^d)=\dot W^{1,p}(\R^d),\qquad p\in(1,\infty),
\end{equation*}
where $\dot W^{1,p}(\R^d)$ is the usual homogeneous Sobolev space
\begin{equation*}
  \dot W^{1,p}(\R^d):=\Big\{f\in L^1_{\loc}(\R^d):\Norm{f}{\dot W^{1,p}(\R^d)}:=\Norm{\nabla f}{L^p(\R^d)}<\infty\Big\},
\end{equation*}
and $\nabla f$ is the distributional gradient. This shows that \cite[Theorem~1]{Frank} falls under the scope of Theorem \ref{thm:Rupert}---except for details of the limit relation \eqref{eq:Rupert}, which takes a sharper form in $\R^d$.
\end{example}

\begin{remark}\label{rem:SobolevSpaces}
Under the assumptions of Theorem \ref{thm:Rupert}, the Haj\l{}asz--Sobolev space $M^{1,p}(\mu)$ is equivalent so several other established notions of Sobolev spaces over a metric measure space, including the Cheeger Sobolev space $\textit{Ch}^{1,p}$ of \cite{Cheeger}, the Korevaar--Schoen Sobolev space $\textit{KS}^{1,p}$ of \cite{KS:93}, the Newtonian Sobolev space $N^{1,p}$ of \cite{Nages:00}, and the Poincar\'e Sobolev space $P^{1,p}$ of \cite{HK:00}, as well as various Sobolev spaces $H^{1,p}$ associated with given systems of vector fields, which is often the preferred definition in concrete situations. See  \cite[Theorem 10.5.3]{HKST} for $M^{1,p}=P^{1,p}=\textit{KS}^{1,p}=N^{1,p}=\textit{Ch}^{1,p}$, and \cite[Theorem 10]{FHK:99} for $H^{1,p}=P^{1,p}$ under the assumptions of Theorem \ref{thm:Rupert} on $(X,\rho,\mu)$, and quite general assumptions on the vector fields defining $H^{1,p}$ in the last equality. The said results are formulated for the inhomogeneous versions of these spaces (involving the size of both the function and its ``gradient''), but they can be readily adapted to the homogeneous versions relevant to Theorem \ref{thm:Rupert}. (Note also that, by Theorem \ref{thm:KZ}, the assumptions of Theorem \ref{thm:Rupert} imply those of  \cite[Theorem 10.5.3]{HKST}.)
\end{remark}

\begin{example}\label{ex:Carnot}
Any Heisenberg group $\mathbb H^n$, and more generally any Carnot group $\mathbb G$ equipped with the Carnot--Carath\'eodory metric $d_{cc}$ and Haar measure $\mu$, is a doubling metric measure space supporting the $(1,1)$-Poincar\'e inequality. Moreover, the Newtonian Sobolev space $N^{1,p}(\mathbb G)$ (and thus the Haj\l{}asz Sobolev space $M^{1,p}(\mathbb G)$ by Remark \ref{rem:SobolevSpaces}) is identified with the {\em horizontal Sobolev space} $W^{1,p}_H(\mathbb G)$; see \cite[Section 14.2]{HKST} for details and additional pointers to the literature. This shows that \cite[Theorem A.3]{LXY} falls under the scope of Theorem \ref{thm:Rupert}.
\end{example}

\section{The macroscopic Poincar\'e inequality}\label{sec:macroPoincare}

In this section, we establish our key new tool, the ``macroscopic'' Poincar\'e inequality of Theorem \ref{thm:macroPoincare}, which plays a role in the characterisation of both Sobolev spaces and constant functions, and is likely to find other applications elsewhere. We first recall some well known facts about the construction of approximate identities on metric spaces.

\begin{lemma}\label{lem:partUnity}
Let $(X,\rho,\mu)$ be a doubling metric measure space, and fix a parameter $\gamma>0$.
Then, for every $t>0$, there exist a countable family of functions $\phi_i$ and points $x_i$ such that the balls $B(x_i,\frac12 t)$ are disjoint, and
\begin{equation*}
  0\leq \phi_i\leq 1_{B(x_i,(1+\gamma)t)},\qquad
  \sum_i\phi_i\equiv 1,\qquad\Norm{\phi_i}{\Lip}\lesssim t^{-1}
\end{equation*}
\end{lemma}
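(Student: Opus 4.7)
The approach is the standard Whitney-type partition of unity adapted to a doubling metric space. The plan splits naturally into three stages: choosing the centers, constructing raw bumps, and normalising into a partition of unity while controlling the Lipschitz constant.

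First I would pick $\{x_i\}$ as a maximal $t$-separated set in $X$, i.e.\ a maximal collection of points with $\rho(x_i,x_j)\geq t$ whenever $i\neq j$. Zorn's lemma supplies one, and separability (which follows from doubling) makes it countable. Separation immediately gives that the balls $B(x_i,\tfrac12 t)$ are disjoint. Maximality gives $X=\bigcup_i B(x_i,t)$, since any point at distance $\geq t$ from all $x_i$ could be added to the set. A doubling argument then bounds the overlap: for any $x\in X$, the balls $B(x_j,\tfrac12 t)$ with $x\in B(x_j,(1+\gamma)t)$ are disjoint and all contained in $B(x,(\tfrac32+\gamma)t)$, so by iterated doubling their number is at most some constant $N=N(c_\mu,\gamma)$.

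Next I would define raw Lipschitz bumps by the explicit formula
\begin{equation*}
  \psi_i(x):=\min\Big(1,\frac{((1+\gamma)t-\rho(x,x_i))_+}{\gamma t}\Big),
\end{equation*}
so that $\psi_i\equiv 1$ on $B(x_i,t)$, $\supp\psi_i\subset B(x_i,(1+\gamma)t)$, and $\Norm{\psi_i}{\Lip}\leq (\gamma t)^{-1}$. The covering property above then gives $\Psi(x):=\sum_i\psi_i(x)\geq 1$ for every $x\in X$, while the bounded-overlap estimate gives $\Psi(x)\leq N$ everywhere. Setting
\begin{equation*}
  \phi_i:=\frac{\psi_i}{\Psi}
\end{equation*}
produces a nonnegative partition of unity with $\supp\phi_i\subset B(x_i,(1+\gamma)t)$, as required.

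The only step needing genuine care is the Lipschitz estimate on $\phi_i$, since a priori $\Psi$ is a sum of potentially many Lipschitz functions. The point is that, for any fixed $x_0\in X$, in a neighbourhood $B(x_0,\tfrac12 t)$ only those $\psi_j$ with $x_j\in B(x_0,(2+\gamma)t)$ are not identically zero, and again by doubling their count is bounded by some $N'=N'(c_\mu,\gamma)$. Therefore $\Psi$ is locally a sum of at most $N'$ functions, each with Lipschitz constant $(\gamma t)^{-1}$, so $\Norm{\Psi}{\Lip(B(x_0,t/2))}\lesssim t^{-1}$. Combined with $1\leq\Psi\leq N$ and the quotient rule
\begin{equation*}
  \abs{\phi_i(x)-\phi_i(y)}\leq\frac{\abs{\psi_i(x)-\psi_i(y)}}{\Psi(x)}+\psi_i(y)\cdot\frac{\abs{\Psi(y)-\Psi(x)}}{\Psi(x)\Psi(y)},
\end{equation*}
this yields $\Norm{\phi_i}{\Lip}\lesssim t^{-1}$, where the implicit constant depends only on $c_\mu$ and $\gamma$. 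The main (mild) obstacle is exactly this overlap bookkeeping; everything else is essentially bookkeeping of the explicit formulas.
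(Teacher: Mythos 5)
Your proposal is correct and follows essentially the same route as the paper: choose a maximal $t$-separated net, build explicit cut-off functions $\psi_i$ with $1_{B(x_i,t)}\leq\psi_i\leq 1_{B(x_i,(1+\gamma)t)}$ and $\Norm{\psi_i}{\Lip}\leq(\gamma t)^{-1}$, use doubling to get $1\leq\Psi=\sum_i\psi_i\leq N$, and normalise $\phi_i=\psi_i/\Psi$. The paper leaves the final Lipschitz bound for $\phi_i$ as ``satisfies the required properties,'' whereas you spell out the bounded-overlap bookkeeping and the quotient estimate --- a welcome expansion of the same argument, not a different one.
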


\begin{proof}
We choose a $t$-net of points $x_i\in X$. Since $(X,\rho,\mu)$ is doubling, this net is locally finite. For each $i$, let $\tilde\phi_i$ be a function with $1_{B(x_i,t)}\leq\tilde\phi_i\leq 1_{B(x_i,(1+\gamma)t)}$ and $\Norm{\tilde\phi_i}{\Lip}\leq (\gamma t)^{-1}$. Then $1\leq \sum_i\tilde\phi_i\leq c$, where $c$ depends only on the doubling constant and $\gamma$. Then $\phi_i:=\tilde\phi_i/\sum_j\tilde\phi_j$ is well-defined, and satisfies the required properties.
\end{proof}

For every $t>0$, we define an approximate identity $\Phi_t:L^1_{\loc}(\mu)\to\Lip_{\loc}(X)$ with the help of a fixed partition of unity $\phi_i$ at scale $t$ as provided by Lemma \ref{lem:partUnity}:
\begin{equation}\label{eq:Phi-tf}
  \Phi_t f:=\sum_i \phi_i(x)\ave{f}_{B(x_i,t)}.
\end{equation}
The local Lipschitz properties of $\Phi_t f$ are quantified in the following:

\begin{lemma}\label{lem:PhitDiff}
Let $(X,\rho,\mu)$ be a doubling metric measure space. For any fixed parameter $a>0$, there exists another parameter $b>0$ such that the following estimates hold for every $f\in L^1_{\loc}(\mu)$ and $t>0$:
If $\rho(x,y)\leq a t$, the approximations $\Phi_t f$ from \eqref{eq:Phi-tf} satisfy the estimate
\begin{equation*}
  \abs{\Phi_t f(x)-\Phi_t f(y)}\lesssim\frac{\rho(x,y)}{t} m_f(x,bt)
\end{equation*}
and in particular
\begin{equation*}
  \Lip \Phi_t f(x)\lesssim \frac{1}{t}m_f(x,bt).
\end{equation*}
\end{lemma}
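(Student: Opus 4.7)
The key algebraic trick exploits the partition of unity: since $\sum_i \phi_i \equiv 1$, the differences $\sum_i(\phi_i(x)-\phi_i(y)) = 0$, so for \emph{any} constant $c$ we can write
\begin{equation*}
  \Phi_t f(x)-\Phi_t f(y) = \sum_i \bigl(\phi_i(x)-\phi_i(y)\bigr)\bigl(\ave{f}_{B(x_i,t)}-c\bigr).
\end{equation*}
The plan is to pick $c=\ave{f}_{B(x,bt)}$ for a suitable $b$, and bound the three ingredients on the right separately: the Lipschitz-type factor $|\phi_i(x)-\phi_i(y)|$, the oscillation factor $|\ave{f}_{B(x_i,t)}-c|$, and the cardinality of the effective index set.

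First, only indices $i$ with $\phi_i(x)\neq 0$ or $\phi_i(y)\neq 0$ contribute; from Lemma \ref{lem:partUnity} these satisfy $x_i\in B(x,(1+\gamma)t)\cup B(y,(1+\gamma)t)$. Using $\rho(x,y)\leq at$, all such $x_i$ lie in $B(x,(a+1+\gamma)t)$. Choosing $b:=a+\gamma+2$ we then have $B(x_i,t)\subset B(x,bt)$, and by doubling $\mu(B(x,bt))\lesssim \mu(B(x_i,t))$. Hence
\begin{equation*}
  \Bigl|\ave{f}_{B(x_i,t)}-\ave{f}_{B(x,bt)}\Bigr|
  \leq\fint_{B(x_i,t)}\bigl|f-\ave{f}_{B(x,bt)}\bigr|\ud\mu
  \lesssim \fint_{B(x,bt)}\bigl|f-\ave{f}_{B(x,bt)}\bigr|\ud\mu
  = m_f(x,bt).
\end{equation*}
Next, Lemma \ref{lem:partUnity} gives $|\phi_i(x)-\phi_i(y)|\lesssim \rho(x,y)/t$. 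Finally, the disjointness of the balls $B(x_i,\tfrac12 t)$ together with the doubling condition bounds the number of relevant indices by a constant depending only on $c_\mu$, $\gamma$, and $a$. Combining these three estimates yields the first claim.

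For the pointwise Lipschitz constant, applying the first estimate with, say, $a=1$ gives
\begin{equation*}
  \sup_{0<\rho(x,y)\leq s}\frac{|\Phi_t f(x)-\Phi_t f(y)|}{s}
  \lesssim \frac{1}{t}m_f(x,bt)
\end{equation*}
for every $s\leq t$, and taking $\liminf_{s\to 0}$ yields $\lip \Phi_t f(x)\lesssim t^{-1}m_f(x,bt)$ with the same $b$.

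I do not anticipate serious obstacles here: the argument is a standard partition-of-unity computation, and the only thing to get right is the careful bookkeeping of the radii so that the single ball $B(x,bt)$ swallows all the local averages $B(x_i,t)$ that actually enter the sum, and so that doubling gives measure comparability at the cost of a constant depending only on $a,\gamma,c_\mu$.
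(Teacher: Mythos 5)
Your proof is correct and follows essentially the same approach as the paper: exploit $\sum_i\phi_i\equiv 1$ to insert a free constant, bound $|\phi_i(x)-\phi_i(y)|\lesssim\rho(x,y)/t$, use doubling to pass from $B(x_i,t)$-averages to a single $B(x,bt)$-average, and invoke bounded overlap. The only cosmetic difference is that you use the single-sum identity $\Phi_t f(x)-\Phi_t f(y)=\sum_i(\phi_i(x)-\phi_i(y))(c_i-c)$ (anchored at a fixed constant $c$), whereas the paper passes through the symmetrized double-sum $\tfrac12\sum_{i,j}(\phi_i(x)\phi_j(y)-\phi_j(x)\phi_i(y))(c_i-c_j)$; your version is a slight streamlining of the same idea.
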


\begin{proof}
Denoting $c_i:=\ave{f}_{B(x_i,t)}$, we first observe the identity
\begin{equation*}
\begin{split}
  \Phi_t f(x) & -\Phi_t f(y) \\
  &=\sum_i\phi_i(x)c_i-\sum_j\phi_j(y)c_j \\
  &=\sum_{i,j}\phi_i(x)\phi_j(y)(c_i-c_j)\qquad\text{since }\sum_i\phi_i\equiv 1 \\
  &=\sum_{i,j}\phi_j(x)\phi_i(y)(c_j-c_i)\qquad\text{swapping the names of the indices} \\
  &=\frac12\sum_{i,j}\Big(\phi_i(x)\phi_j(y)-\phi_j(x)\phi_i(y)\Big)(c_i-c_j)
\end{split}
\end{equation*}
by taking the average of the previous two equal expressions in the last step.

In order that
\begin{equation*}
  \phi_i(x)\phi_j(y)-\phi_j(x)\phi_i(y)\neq 0
\end{equation*}
it is necessary that $\phi_i(x)\neq 0$ or $\phi_i(y)\neq 0$, thus $x_i\in B(x,(1+\gamma)t)$ or $x_i\in B(y,(1+\gamma)t)\subset B(x,(1+\gamma+a) t)$; hence in any case $x_i\in B(x,(1+\gamma+a) t)$. Similarly, we deduce that $x_j\in B(x,(1+\gamma+a) t)$. In particular, the relevant summations over both $i$ and $j$ have only boundedly many nonzero terms.

Moreover, it follows $B(x_i,t)\subset B(x,(1+\gamma+2a) t)\subset B(x_i,2(1+\gamma+a) t)$, and hence
\begin{equation*}
\begin{split}
  &\abs{c_i-c_j}
  =\abs{\ave{f}_{B(x_i,t)}-\ave{f}_{B(x_j,t)}} \\
  &\quad=\Babs{\fint_{B(x_i,t)}(f-\ave{f}_{B(x,(1+\gamma+2a) t)})\ud\mu-\fint_{B(x_j,t)}(f-\ave{f}_{B(x,(1+\gamma+2a) t)})\ud\mu} \\
  &\quad\lesssim \fint_{B(x,b t)}\abs{f-\ave{f}_{B(x,b t)}}\ud\mu,\qquad b:=1+\gamma+2a.
\end{split}
\end{equation*}
On the other hand, we have
\begin{equation*}
\begin{split}
  \abs{\phi_i(x)\phi_j(y)-\phi_j(x)\phi_i(y)}
  &=\abs{(\phi_i(x)-\phi_i(y))\phi_j(y)+\phi_i(y)(\phi_j(y)-\phi_j(x))} \\
  &\leq \rho(x,y)\Norm{\phi_i}{\Lip}\Norm{\phi_j}{\infty}+\Norm{\phi_i}{\infty}\rho(x,y)\Norm{\phi_j}{\Lip} \\
  &\lesssim\frac{\rho(x,y)}{t}.
\end{split}
\end{equation*}
Altogether, we have
\begin{equation*}
  \abs{\Phi_t f(x)-\Phi_t f(y)}\lesssim \frac{\rho(x,y)}{t}\fint_{B(x,b t)}\abs{f-\ave{f}_{B(x,b t)}}\ud\mu
\end{equation*}
as claimed. The claim concerning $\lip\Phi_t f(x)$ is an immediate consequence.
\end{proof}

We are now ready to prove the macroscopic Poincar\'e inequality:

%The usual Poincar\'e inequality \eqref{eq:Poincare} is about controlling macroscopic oscillations by infinitesimal ones, i.e., oscillations on a limiting scale $t\to 0$. We now provide a version comparing the oscillations between two macroscopic scales. We believe that this might have independent interest elsewhere.

\begin{theorem}\label{thm:macroPoincare}
Let $p\in[1,\infty)$ and let $(X,\rho,\mu)$ be doubling metric measure space supporting a $(1,p)$-Poincar\'e inequality with constants $(\lambda,c_P)$. Then for all $f\in L^1_{\loc}(\mu)$, all $z\in X$, and all $0<r<s<\infty$, we have 
\begin{equation*}
  m_f(z,s)\lesssim\frac{s}{r}\Big(\fint_{B(z,\tilde\lambda s)} m_f(x,r)^p\ud\mu(x)\Big)^{\frac1p},
\end{equation*}
where $\tilde\lambda:=\max(2,\lambda)$ and the implied constant depends only on the doubling and Poincar\'e data.
\end{theorem}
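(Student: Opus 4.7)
The plan is to approximate $f$ by a locally Lipschitz function $\Phi_{r'}f$ from \eqref{eq:Phi-tf} at a small scale $r'=r/C$, with $C\geq 1$ a constant (depending only on the doubling and Poincar\'e data) to be fixed at the end, and to split
\[
m_f(z,s)\leq 2\fint_{B(z,s)}|f-\Phi_{r'}f|\,\ud\mu+2\,m_{\Phi_{r'}f}(z,s)
\]
by replacing $\ave{f}_{B(z,s)}$ inside the definition of $m_f(z,s)$ with the constant $\ave{\Phi_{r'}f}_{B(z,s)}$ and using the triangle inequality. The problem thus reduces to a Poincar\'e estimate on the Lipschitz main term and an oscillation estimate on the approximation error; the $s/r$ factor in the conclusion will come entirely from the main term.

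For the main term, Lemma \ref{lem:PhitDiff} gives the pointwise bound $\lip\Phi_{r'}f(x)\lesssim r'^{-1}m_f(x,br')$, so Definition \ref{def:Poincare} applied to $\Phi_{r'}f\in\Lip_{\loc}$ yields
\[
m_{\Phi_{r'}f}(z,s)\lesssim\frac{s}{r'}\Big(\fint_{B(z,\lambda s)}m_f(x,br')^p\,\ud\mu(x)\Big)^{1/p}.
\]
For the error term, the identity $\sum_i\phi_i\equiv 1$ yields $f(x)-\Phi_{r'}f(x)=\sum_i\phi_i(x)(f(x)-\ave{f}_{B(x_i,r')})$. Integrating over $B(z,s)$, swapping sum and integral, and using $\supp\phi_i\subset B(x_i,(1+\gamma)r')$, each summand is bounded by $\int_{B(x_i,(1+\gamma)r')}|f-\ave{f}_{B(x_i,r')}|\,\ud\mu\lesssim\mu(B(x_i,(1+\gamma)r'))\,m_f(x_i,(1+\gamma)r')$ via a standard oscillation comparison between concentric balls. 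Only indices with $x_i\in B(z,s+(1+\gamma)r')$ contribute; the balls $B(x_i,r'/2)$ are disjoint and of measure comparable to $\mu(B(x_i,(1+\gamma)r'))$; and for $x\in B(x_i,r'/2)$ one has $m_f(x_i,(1+\gamma)r')\lesssim m_f(x,(\tfrac32+\gamma)r')$ by a second oscillation comparison based on $B(x_i,(1+\gamma)r')\subset B(x,(\tfrac32+\gamma)r')$. Repackaging the sum as a single integral over $\bigcup_i B(x_i,r'/2)\subset B(z,(2+\gamma)s)$ and using H\"older, I obtain
\[
\fint_{B(z,s)}|f-\Phi_{r'}f|\,\ud\mu\lesssim\Big(\fint_{B(z,(2+\gamma)s)}m_f(x,Cr')^p\,\ud\mu\Big)^{1/p}.
\]

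Combining the two bounds, absorbing the error term into the main one using $s\geq r'$, and merging the radii $(2+\gamma)s$ and $\lambda s$ into $\tilde\lambda s=\max(2,\lambda)s$ by doubling (with $\gamma$ chosen small), I arrive at $m_f(z,s)\lesssim(s/r')(\fint_{B(z,\tilde\lambda s)}m_f(x,Cr')^p\,\ud\mu)^{1/p}$. Finally, choosing $r':=r/C$ with $C$ large enough to absorb all the preceding scale constants $b$, $(1+\gamma)$, $(\tfrac32+\gamma)$ produces exactly $m_f(x,r)$ on the right-hand side, while multiplying $s/r'$ only by a harmless universal constant. The main obstacle is not conceptual but notational: careful bookkeeping of the scales $r'$, $br'$, $(1+\gamma)r'$, $Cr'$ and of the basepoint switch between $x_i$ and $x\in B(x_i,r'/2)$; beyond this, the proof rests on the approximate identity of Lemmas \ref{lem:partUnity}--\ref{lem:PhitDiff}, the classical Poincar\'e inequality, and the doubling property.
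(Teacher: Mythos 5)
Your proposal is correct and follows essentially the same route as the paper's own proof: the same decomposition $m_f(z,s)\lesssim\fint_{B(z,s)}|f-\Phi_t f|+m_{\Phi_t f}(z,s)$ with the mollification $\Phi_t f$ at scale $t\approx r$, the Poincar\'e inequality plus Lemma \ref{lem:PhitDiff} for the mollified main term, the partition-of-unity identity $f-\Phi_t f=\sum_i\phi_i(f-c_i)$ for the error term, and a final choice of the mollification scale to align the radii. The only (harmless) imprecision is attributing the merging of $B(z,(2+\gamma)s)$ into $B(z,\tilde\lambda s)$ to taking $\gamma$ small; as in the paper, the correct mechanism is that the support condition and $r'\le r\le s$ give containment in $B(z,2s)$ directly, since doubling alone does not let one pass from an average over a larger ball to one over a strictly smaller ball.
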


\begin{proof}
It is enough to prove the claim with some number $c$ in place of $\ave{f}_{B(z,s)}$ in the expression
\begin{equation*}
  m_f(z,s)=\fint_{B(z,s)}\abs{f-\ave{f}_{B(z,s)}}\ud\mu.
\end{equation*}
For $t\sim r$ to be specified, we consider an approximate identity $\Phi_t$ at scale $t$ and estimate
\begin{equation*}
  \fint_{B(z,s)}\abs{f-c}\ud\mu
  \leq\fint_{B(z,s)}\abs{f-\Phi_t f}\ud\mu+\fint_{B(z,s)}\abs{\Phi_t f-c}\ud\mu=:I+II.
\end{equation*}

We deal with term $II$ first. Taking $c=\ave{\Phi_t f}_{B(z,s)}$, we have
\begin{equation}\label{eq:macroII}
\begin{split}
  II &\lesssim s\Big(\fint_{B(z,\lambda s)}[\Lip\Phi_t f(x)]^p\ud\mu(x)\Big)^{\frac1p}\qquad\text{(Poincar\'e)} \\
  &\lesssim s\Big(\fint_{B(z,\lambda s)}\Big[\frac{1}{t}m_f(x,bt)\Big]^p\ud\mu(x)\Big)^{\frac1p}
  \quad\text{(Lemma \ref{lem:PhitDiff})}.
\end{split}
\end{equation}

We then turn to term $I$. Denoting $c_i:=\ave{f}_{B(x_i,t)}$, we have
\begin{equation*}
  f-\Phi_t f=\sum_i\phi_i(f-c_i),
\end{equation*}
where $\phi_i$ is a partition of unity at scale $t$ as in Lemma \ref{lem:partUnity}.
Hence
\begin{equation*}
  I\lesssim \frac{1}{\mu(B(z,s))}\sum_{i:B(x_i,(1+\gamma) t)\cap B(z,s)\neq\varnothing}\mu(B(x_i,(1+\gamma)t))\fint_{B(x_i,(1+\gamma) t)}\abs{f-c_i}\ud\mu.
\end{equation*}

For each $x\in B(x_i,\frac12 t)$, we have $$B(x_i,(1+\gamma)t)\subset B(x,(\frac32+\gamma) t)\subset B(x_i,(2+\gamma)t),$$ and hence
\begin{equation*}
   \fint_{B(x_i,(1+\gamma)t)}\abs{f-\ave{f}_{B(x_i,t)}}\ud\mu
   \lesssim \fint_{B(x,\beta t)}\abs{f-\ave{f}_{B(x,\beta t)}}\ud\mu,\quad \beta:=\frac32+\gamma.
\end{equation*}
Thus (abbreviating the summation condition $B(x_i,(1+\gamma)t)\cap B(z,s)\neq\varnothing$ by ``$*$'')
\begin{equation*}
\begin{split}
  I &\lesssim\sum_i^*\frac{\mu(B(x_i,\frac12 t))}{\mu(B(z,s))}\inf_{x\in B(x_i,\frac12 t)}m_f(x,\beta t) \\
  &\lesssim\frac{1}{\mu(B(z,s))}\sum_i^*\int_{B(x_i,\frac12 t)}m_f(x,\beta t)\ud\mu(x)
\end{split}
\end{equation*}
The balls $B(x_i,\frac12 t)$ are disjoint and, under the assumption that $B(x_i,(1+\gamma)t)\cap B(z,s)\neq\varnothing$, contained in $B(z,s+(\frac32+\gamma) t)=B(z,s+\beta t)$. Hence
\begin{equation*}
  I \lesssim \frac{1}{\mu(B(z,s))}\int_{B(z,s+\beta t)}m_f(x,\beta t)  \ud\mu(x).
\end{equation*}

Let us now fix choose $c:=\max(b,\beta)$ and $t:=c^{-1}r$. Then $s+\beta t\leq s+r\leq 2s$, and we have proved that
\begin{equation*}
\begin{split}
  II &\lesssim\frac{s}{r}\Big(\fint_{B(z,\lambda s)} m_f(x,r)^p
  \ud\mu(x)\Big)^{\frac1p}, \\
  I &\lesssim \fint_{B(z,2s)}m_f(x,r)\ud\mu(x) 
   \leq \frac{s}{r}\Big(\fint_{B(z,2s)}m_f(x,r)^p\ud\mu(x)\Big)^{\frac1p}
\end{split}
\end{equation*}
by H\"older's inequality and the trivial bound $1\leq s/r$ in the last step.

By yet another application of doubling, we can dominate the averages over both $B(z,\lambda s)$ and $B(z,2s)$ by one over $B(z,\max(2,\lambda) s)$. Thus both terms satisfy the claimed bound.
\end{proof}

\begin{remark}\label{rem:macroPoincare}
In the proof of Theorem \ref{thm:macroPoincare}, the assumed $(1,p)$-Poincar\'e inequality was only used in \eqref{eq:macroII} in a weaker form involving the larger $\Lip$ on the right-hand side, instead of the smaller $\lip$ as in Definition \eqref{def:Poincare}. By Lemma \ref{lem:Poincares}, these versions are equivalent if $X$ is complete, but completeness is not assumed in Theorem \ref{thm:macroPoincare}.
\end{remark}

\section{How to recognise constants}\label{sec:constants}

With the macroscopic Poincar\'e inequality in our toolbox, we are in a position to give a relatively quick proof of the characterisation of constants via Besov-type conditions as in Theorem \ref{thm:const-intro}. We have the following slightly stronger version of case \eqref{it:B1dd=const} of the said theorem:

\begin{proposition}\label{prop:constBesov}
Let $p\in[1,\infty)$ and let $(X,\rho,\mu)$ be a doubling metric measure space supporting the $(1,p)$-Poincar\'e inequality.
If $f\in L^1_{\loc}(\mu)$ satisfies
\begin{equation}\label{eq:criticalBesov}
  \Big[(x,y)\mapsto  \frac{\abs{f(x)-f(y)}^p}{\rho (x,y)^p}\frac{1}{V(x,y)}\Big]\in L^1_{\loc}(\mu\times\mu),
\end{equation}
then $f$ is equal to a constant almost everywhere. In particular, the Besov space defined in Theorem \ref{thm:const-intro} satisfies
\begin{equation*}
  \dot B^1_{p,p}(\mu)=\{\operatorname{constants}\}.
\end{equation*}
\end{proposition}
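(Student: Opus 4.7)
The plan is to show, under the local integrability hypothesis \eqref{eq:criticalBesov}, that $m_f(z,s)=0$ for every ball $B(z,s)$; this forces $f$ to be essentially constant on each ball, and hence on each connected component of $X$ (which is $X$ itself under the ``usual assumptions''). The macroscopic Poincar\'e inequality (Theorem \ref{thm:macroPoincare}) will be the bridge between the small-scale mean oscillations and the Besov-type integrand $F(u,v):=|f(u)-f(v)|^p/(\rho(u,v)^p V(u,v))$.

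The first step is a pointwise comparison. By Jensen's inequality,
\begin{equation*}
  m_f(x,r)^p\leq\fint_{B(x,r)}\fint_{B(x,r)}\abs{f(u)-f(v)}^p\ud\mu(u)\ud\mu(v).
\end{equation*}
For $u,v\in B(x,r)$ one has $\rho(u,v)\leq 2r$ and, by two applications of doubling, $V(u,v)=\mu(B(u,\rho(u,v)))\lesssim V(u,2r)\lesssim V(x,r)$. Writing $|f(u)-f(v)|^p=F(u,v)\rho(u,v)^p V(u,v)$ therefore gives
\begin{equation*}
  m_f(x,r)^p\lesssim\frac{r^p}{V(x,r)}\int_{B(x,r)\times B(x,r)}F(u,v)\ud\mu(u)\ud\mu(v).
\end{equation*}
Plugging this into the $p$th power of Theorem \ref{thm:macroPoincare} and applying Fubini, I would get
\begin{equation*}
  m_f(z,s)^p\lesssim\frac{s^p}{V(z,\tilde\lambda s)}\iint F(u,v)\Big(\int_{B(z,\tilde\lambda s)\cap B(u,r)\cap B(v,r)}\frac{\ud\mu(x)}{V(x,r)}\Big)\ud\mu(u)\ud\mu(v).
\end{equation*}
For $x\in B(u,r)$, doubling yields $V(x,r)\approx V(u,r)$, so the inner integral is uniformly bounded, and the integrand is supported on $(u,v)\in B(z,(\tilde\lambda+1)s)^2$ with $\rho(u,v)<2r$. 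This leaves
\begin{equation*}
  m_f(z,s)^p\lesssim\frac{s^p}{V(z,\tilde\lambda s)}\iint_{A_r(z,s)}F(u,v)\ud\mu(u)\ud\mu(v),
  \qquad A_r(z,s):=\{(u,v)\in B(z,(\tilde\lambda+1)s)^2:\rho(u,v)<2r\}.
\end{equation*}

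The last step is to let $r\to 0$. The sets $A_r(z,s)$ decrease to the diagonal in $B(z,(\tilde\lambda+1)s)^2$, which has zero $\mu\times\mu$-measure because the assumptions force a positive lower dimension $d>0$ (as recalled in the introduction), making $\mu$ atomless: indeed $V(x,r)\to 0$ as $r\to 0$ for every $x$. Since $F\in L^1_{\loc}(\mu\times\mu)$, dominated convergence (or absolute continuity of the integral) gives $\iint_{A_r(z,s)}F\to 0$, whence $m_f(z,s)=0$ for every $z\in X$ and every $s>0$. This means $f=\ave{f}_{B(z,s)}$ almost everywhere on each ball; since overlapping balls must share the same constant and $X$ is connected under the ``usual assumptions'' (via \cite[Proposition 4.2]{BB:book}), $f$ is almost everywhere equal to a single constant. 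The ``in particular'' statement then follows, as $\|f\|_{\dot B^1_{p,p}(V)}<\infty$ entails the local integrability \eqref{eq:criticalBesov}.

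The main obstacle is the Fubini/volume-comparison computation in the second step: one must verify that, despite the highly nonlinear denominator $V(u,v)=\mu(B(u,\rho(u,v)))$, the geometric overlaps of the balls $B(x,r)$, $B(u,r)$, $B(v,r)$ are controlled purely by the doubling constant, so that after exchanging the orders of integration the surviving region really is the small diagonal strip $A_r(z,s)$, to which absolute continuity can be applied. Everything else is a straightforward application of Jensen, Theorem \ref{thm:macroPoincare}, and the no-atoms consequence of $d>0$.
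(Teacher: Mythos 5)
Your proof is correct and follows essentially the same route as the paper's: apply Jensen's inequality to bound $m_f(x,r)^p$ by a double average of $|f(u)-f(v)|^p$, feed this into Theorem~\ref{thm:macroPoincare}, exchange orders of integration using doubling to recognise the local Besov integrand restricted to the strip $\{\rho(u,v)<2r\}$, and let $r\to 0$ via dominated convergence. The only cosmetic difference is that you factor out the Besov integrand $F$ immediately after Jensen (using $\rho(u,v)\leq 2r$ and $V(u,v)\lesssim V(x,r)$ at that stage), whereas the paper carries $|f(y)-f(v)|^p$ through the Fubini manipulation and inserts the factor $(2r/\rho(v,y))^p\geq 1$ only at the end; you also justify the vanishing of $\iint_{A_r}F$ by the diagonal having measure zero rather than by noting directly that the integrand vanishes there since $|f(u)-f(v)|=0$, but both observations are valid.
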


\begin{proof}
Note that $f\in \dot B^1_{p,p}(\mu)$ means, by definition, that the function in \eqref{eq:criticalBesov} is not only $L^1_{\loc}(\mu\times\mu)$ but globally in $L^1(\mu\times\mu)$; hence it is enough to prove the first assertion.

It suffices to show that, for every ball $B(z,s)\subset X$, we have $f\equiv\ave{f}_{B(z,s)}$ almost everywhere in $B(z,s)$. We fix one such ball and apply Theorem \ref{thm:macroPoincare} to see that, for every $r\in(0,s)$, and using the notation of the said proposition,
%\begin{equation*}\begin{split}
\begin{align*}
  &\fint_{B(z,s)}\abs{f-\ave{f}_{B(z,s)}}\ud\mu \\
  &\lesssim\frac{s}{r}\Big(\fint_{B(z,\tilde\lambda s)}\Big[\fint_{B(x,r)}\abs{f(y)-\ave{f}_{B(x,r)}}\ud\mu(y)\Big]^p\ud\mu(x)\Big)^{\frac1p} \\
  &\leq\frac{s}{r}\Big(\fint_{B(z,\tilde\lambda s)}\fint_{B(x,r)}\fint_{B(x,r)}\abs{f(y)-f(v)}^p\ud\mu(y)\ud\mu(v)\ud\mu(x)\Big)^{\frac1p} \\
  &=\frac{s}{r}\Big(\int_{x\in B(z,\tilde\lambda s)}\int_{v\in B(x,r)}\int_{y\in B(x,r)}\frac{\abs{f(y)-f(v)}^p}{V(x,r)^2}
    \frac{\ud\mu(y)\ud\mu(v)\ud\mu(x)}{V(z,\tilde\lambda s)}\Big)^{\frac1p} \\
  &\lesssim\frac{s}{r}\Big(\int_{y\in B(z,\tilde\lambda s+r)}\int_{v\in B(y,2r)}\int_{x\in B(y,r)}\frac{\abs{f(y)-f(v)}^p}{V(y,r)^2}\ud\mu(x)
  \frac{\ud\mu(v)\ud\mu(y)}{V(z,\tilde\lambda s)}\Big)^{\frac1p} \\
  &=\frac{s}{r}\Big(\int_{y\in B(z,(\tilde\lambda+1)s)}\int_{v\in B(y,2r)}
  \frac{\abs{f(y)-f(v)}^p}{V(y,r)}\frac{\ud\mu(v)\ud\mu(y)}{V(z,\tilde\lambda s)}\Big)^{\frac1p} \\
  &\lesssim\frac{s}{r}\Big(\int_{y\in B(z,(\tilde\lambda+1)s)}\int_{v\in B(y,2r)}\Big(\frac{2r}{\rho(v,y)}\Big)^p
  \frac{\abs{f(y)-f(v)}^p}{V(v,y)}\frac{\ud\mu(v)\ud\mu(y)}{V(z,\tilde\lambda s)}\Big)^{\frac1p} \\
  &\lesssim s\Big(\iint_{(v,y)\in B(z,(\tilde\lambda+3)s)^2}1_{\{\rho(v,y)< 2r\}}
  \frac{\abs{f(y)-f(v)}^p}{\rho(v,y)^p V(v,y)}\frac{\ud\mu(v)\ud\mu(y)}{V(z,\tilde\lambda s)}\Big)^{\frac1p}.
\end{align*}
%\end{split}\end{equation*}
In the rightmost form, the integrand is pointwise dominated by the function 
\begin{equation*}
  (v,y)\mapsto\frac{\abs{f(y)-f(v)}^p}{\rho(v,y)^p}\frac{1}{V(v,y)},
\end{equation*}
which is integrable over the bounded set $B(z,(\tilde\lambda+3)s)^2\subset X\times X$ by assumption \eqref{eq:criticalBesov}. As $r\to 0$, it is evident that we have $1_{\{\rho(v,y)<2r\}}\to 0$ pointwise (except when $v=y$, but in this case the integrand vanishes anyway due to the factor $\abs{f(y)-f(v)}^p$), and hence the integral above converges to $0$ by dominated convergence. This shows that
\begin{equation*}
   \fint_{B(z,s)}\abs{f-\ave{f}_{B(z,s)}}\ud\mu = 0,
\end{equation*}
and hence $f\equiv \ave{f}_{B(z,s)}$ almost everywhere in $B(x,s)$, as we wanted to show.
\end{proof}

Under the additional assumption of {\em lower dimension} $d$ as in Definition \ref{def:dims} we also obtain the following variant of the constancy criterion, which turns out to be natural in view of the commutator applications in \cite{Hyt:Sp}.

\begin{corollary}\label{cor:constBesov}
Let $1\leq d<\infty$ and let $(X,\rho,\mu)$ be a doubling metric measure space having lower dimension $d$ and supporting the $(1,d)$-Poincar\'e inequality.
If $0<p\leq d$ and $f\in L^p_{\loc}(\mu)$ satisfies
\begin{equation}\label{eq:criticalBesov2}
  \Big[(x,y)\mapsto  \frac{\abs{f(x)-f(y)}^p}{V(x,y)^2}\Big]\in L^1_{\loc}(\mu\times\mu),
\end{equation}
then $f$ is equal to a constant almost everywhere. In particular, the Besov space defined in Theorem \ref{thm:const-intro} satisfies
\begin{equation*}
   \dot B^p(\mu)=\{\operatorname{constants}\}\qquad\forall\ p\in(0,d].
\end{equation*}
\end{corollary}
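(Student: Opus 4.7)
The strategy is to reduce the corollary to Proposition \ref{prop:constBesov}, by showing that the hypothesis \eqref{eq:criticalBesov2} plus the lower dimension bound $p\leq d$ implies the hypothesis \eqref{eq:criticalBesov} of that proposition. Once this reduction is in place, Proposition \ref{prop:constBesov} directly yields that $f$ is constant almost everywhere, and the ``in particular'' statement follows by observing that if $f \in \dot B^p(\mu)$ then the function in \eqref{eq:criticalBesov2} is globally integrable, hence locally integrable.

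To carry out the reduction, fix a ball $B = B(z_0, s)$, and consider any pair $(x,y) \in B \times B$, so that $\rho(x,y) \leq 2s$. The plan is to estimate $V(x,y)/\rho(x,y)^p$ from above in terms of a constant depending on the ball. Applying the lower dimension inequality from \eqref{eq:dD} with radii $\rho(x,y) \leq 2s$, we obtain
\begin{equation*}
  V(x,y) = V(x, \rho(x,y)) \lesssim \Big(\frac{\rho(x,y)}{s}\Big)^d V(x, s),
\end{equation*}
and by doubling we may replace $V(x, s)$ by $V(z_0, s)$ at the cost of another constant. Since $p \leq d$ and $\rho(x,y) \leq 2s$, we have $\rho(x,y)^d \leq (2s)^{d-p} \rho(x,y)^p$, and hence
\begin{equation*}
  V(x,y) \lesssim \frac{V(z_0, s)}{s^p} \, \rho(x,y)^p.
\end{equation*}

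Dividing the integrand of \eqref{eq:criticalBesov2} by $V(x,y)$ and using this bound in reverse, we get on $B \times B$ that
\begin{equation*}
  \frac{\abs{f(x)-f(y)}^p}{\rho(x,y)^p \, V(x,y)}
  = \frac{V(x,y)}{\rho(x,y)^p} \cdot \frac{\abs{f(x)-f(y)}^p}{V(x,y)^2}
  \lesssim \frac{V(z_0, s)}{s^p} \cdot \frac{\abs{f(x)-f(y)}^p}{V(x,y)^2},
\end{equation*}
so that local integrability on $B \times B$ transfers from the expression in \eqref{eq:criticalBesov2} to the one in \eqref{eq:criticalBesov}. Since $B$ was arbitrary, \eqref{eq:criticalBesov} holds, and Proposition \ref{prop:constBesov} concludes the argument.

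There is no real obstacle here: the whole point of including the lower dimension hypothesis and the restriction $p \leq d$ is precisely to be able to absorb one power of $V(x,y)$ into the weight $\rho(x,y)^p$ on bounded sets. The only subtlety worth checking is that the constant in the comparison above depends on the ball $B$, which is harmless since the conclusion of Proposition \ref{prop:constBesov} is itself local.
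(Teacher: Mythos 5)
Your proof is correct and follows essentially the same route as the paper: fix a ball, use the lower dimension bound and $p\leq d$ to show that $V(x,y)/\rho(x,y)^p$ is bounded on that ball by a constant depending only on the ball, thereby transferring local integrability from the integrand of \eqref{eq:criticalBesov2} to that of \eqref{eq:criticalBesov}, and then invoke Proposition \ref{prop:constBesov}. The only cosmetic difference is that the paper starts from the ratio $\mu(B_0)/V(x,y)$ and bounds it below, while you bound $V(x,y)$ above directly, but these are the same inequality rearranged.
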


\begin{proof}
\textbf{Case $p=d$.} Fix a ball $B_0:=B(x_0,R)$. For all $x,y\in B_0$, we have
\begin{equation*}
  \frac{\mu(B_0)}{V(x,y)}\sim\frac{V(x,R)}{V(x,y)}
  \gtrsim\Big(\frac{R}{\rho(x,y)}\Big)^d
\end{equation*}
by doubling in the first step and lower dimension $d$ in the second one.
%\begin{equation*}
%\begin{split}
%  \frac{\mu(B_0)}{V(x,y)}&\sim\frac{V(x,R)}{V(x,y)}\quad\text{by doubling}\\
%  &\gtrsim\Big(\frac{R}{\rho(x,y)}\Big)^d\quad\text{by lower dimension}.
%%  &\gtrsim\Big(\frac{R}{\rho(x,y)}\Big)^p\quad\text{by }d\geq p\text{ and }\frac{R}{\rho(x,y)}\gtrsim 1.
%\end{split}
%\end{equation*}
Hence
\begin{equation*}
  \frac{1}{\rho(x,y)^d}\lesssim\frac{1}{V(x,y)}\frac{\mu(B_0)}{R^d},
\end{equation*}
and thus
\begin{equation*}
  \frac{\abs{f(x)-f(y)}^d}{\rho(x,y)^d V(x,y)}
  \lesssim\frac{\abs{f(x)-f(y)}^d}{V(x,y)^2}\frac{\mu(B_0)}{R^d}.
\end{equation*}
Under the assumption \eqref{eq:criticalBesov2}, the right-hand side is integrable over $B_0\times B_0$, and hence is the left-hand side. Since this is true for every ball $B_0$, we have verified assumption \eqref{eq:criticalBesov} of Proposition \ref{prop:constBesov} with $p=d$, and the said proposition shows that $f$ is constant.

\textbf{Case $p\in(0,d)$.} We reduce this to the previous case with the help of the following elementary observation. For $z\in\C$, or more generally an element of a normed space, we define its truncation at level $K>0$ by
\begin{equation}\label{eq:zK}
  z_K:=\begin{cases} z, & \text{if }\abs{z}\leq K, \\ \displaystyle K\frac{z}{\abs{z}}, & \text{else}.\end{cases}
\end{equation}
Then $\abs{z_K}\leq K$ (obviously), and it is easy to check that $\abs{a_K-b_K}\leq 2\abs{a-b}$ for any two elements $a,b$. Hence
\begin{equation*}
  \abs{a_K-b_K}^d=\abs{a_K-b_K}^{d-p}\abs{a_K-b_K}^p\leq(2K)^{d-p}(2\abs{a-b})^p= 2^d K^{d-p}\abs{a-b}^p.
\end{equation*}

Given $f\in L^p_{\loc}(\mu)$, we consider the truncated functions $f_K$, applying \eqref{eq:zK} pointwise. If $f$ satisfies \eqref{eq:criticalBesov2} for some $p\in(0,d)$, then $f_K\in L^\infty_{\loc}(\mu)\subseteq L^d_{\loc}(\mu)$ satisfies
\begin{equation*}
  \frac{\abs{f_K(x)-f_K(y)}^d}{V(x,y)^2}
  \leq 2^d K^{d-p}\frac{\abs{f(x)-f(y)}^p}{V(x,y)^2}\in L^1_{\loc}(\mu\times\mu).
\end{equation*}
By the case $p=d$ that we already proved, it follows that $f_K$ is equal to some constant $c_K$ almost everywhere. Since $f_K=f=f_{K'}$ on $\{f\leq\max(K,K')\}$, it follows that $c_K=c_{K'}=:c$ is independent of $K$ as soon as $K$ is large enough so that $\{f\leq K\}$ has positive measure. Since $f_K\to f$ pointwise as $K\to\infty$, it follows that $f=c$ almost everywhere.
\end{proof}

We record the following corollary in a specific measure space, improving a recent result of \cite[Proposition 6.4]{FLLX}; see Remark \ref{rem:Bessel} below for comparison and motivation.

\begin{corollary}\label{cor:Bessel}
Let $n\geq 0$ and $\lambda>0$, and consider the measure $\ud m_\lambda(x)=x_{n+1}^{2\lambda}\ud x$ on $\R^{n+1}_+$. Then
\begin{equation*}
   \dot B^p(m_\lambda)=\{\operatorname{constants}\}\qquad\forall\ p\in(0,n+1].
\end{equation*}
\end{corollary}

\begin{proof}
By \cite[Proposition 4.2]{Hyt:Sp}, $(\R^{n+1}_+,\abs{x-y},m_\lambda)$ is a doubling metric measure space of lower dimension $n+1$ and supporting the $(1,1)$ (and hence in particular the $(1,n+1)$) Poincar\'e inequality. Hence Corollary \ref{cor:Bessel} is a specialisation of Corollary \ref{cor:constBesov}, with $d=n+1$, to this concrete setting.
\end{proof}

\begin{remark}\label{rem:Bessel}
The metric measure space $(\R^{n+1},\abs{x-y},m_\lambda)$ of Corollary \ref{cor:Bessel} is the so-called ``Bessel setting'' arising from the analysis of the Bessel differential operator
\begin{equation*}
  \Delta_\lambda:=-\sum_{j=1}^{n+1}\frac{\partial^2}{\partial x_j^2}-\frac{2\lambda}{x_{n+1}}\frac{\partial}{\partial x_{n+1}}.
\end{equation*}
As a tool for related commutator estimates, \cite[Proposition 6.4]{FLLX} recently proved that
\begin{equation}\label{eq:FLLX}
   \dot B^p(m_\lambda)\cap C^2(\R_+^{n+1})=\{\operatorname{constants}\}\qquad\forall\ p\in(0,n+1].
\end{equation}
Corollary \ref{cor:Bessel} shows that the restriction to $C^2$ functions can be removed, illustrating the power of our abstract methods, even when specialised to a concrete case like the Bessel setting.

This is used for commutator estimates as follows. For so-called Bessel--Riesz transforms $R_{\lambda,j}:=\partial_j\Delta_\lambda^{-\frac12}$, \cite[Proposition 5.2]{FLLX} proves that
\begin{equation*}
  \Norm{b}{\dot B^p(m_\lambda)}\lesssim\Norm{[b,R_{\lambda,j}]}{S^p(L^2(m_\lambda))}.
\end{equation*}
In combination with \eqref{eq:FLLX}, this shows that $[b,R_{\lambda,j}]\in S^p(L^2(m_\lambda))$ if and only if $b$ is constant---under the {\em a priori} assumption that $b\in C^2(\R^{n+1}_+)$. With Corollary \ref{cor:Bessel} in place of \eqref{eq:FLLX}, this {\em a priori} assumption may be dropped. Using the abstract Corollary \ref{cor:constBesov} instead, this reasoning is extended to more general domains $(X,\rho,\mu)$ and operators $T$ in place of $(\R_+^{n+1},\abs{x-y},m_\lambda)$ and $R_{\lambda,j}$ in \cite{Hyt:Sp}.

For these commutator applications, one only needs case $p=d>1$ of Corollary \ref{cor:constBesov} (which is all that we had in an earlier version of this paper). Namely, if $[b,T]\in S^p(L^2(\mu))$ for some $p\in(0,d]$ and $d>1$, then we can use the elementary inclusion $S^p\subseteq S^q$ for $p\leq q$ of the Schatten classes to conclude that $[b,T]\in S^d(L^2(\mu))$ as well. Then it suffices to know that $\Norm{b}{\dot B^d(\mu)}\lesssim\Norm{[b,T]}{S^d(\mu)}$ (as is proved under quite general assumptions in \cite{Hyt:Sp}; this needs, however, $d>1$ with strict inequality) and that $\dot B^d(\mu)=\{\operatorname{constants}\}$ (guaranteed by case $p=d$ of Corollary \ref{cor:constBesov}; this is valid for $d\geq 1$, with equality allowed) to conclude that $b=\operatorname{constant}$.
\end{remark}

%%%% Old Part I starts %%%%

\section{The lower bound for the Sobolev norm}\label{sec:MpLowerBd}

Our aim in the rest of the paper is to prove the characterisation of the Haj\l{}asz--Sobolev space formulated Theorem \ref{thm:Rupert}, our main result. We begin with the easy direction. This short section is a relatively straightforward adaptation of \cite[Section 2]{Frank}. Moreover, this direction is valid under much more general assumptions than the full Theorem \ref{thm:Rupert}.

\begin{proposition}\label{prop:MpLowerBd}
Let $(X,\rho,\mu)$ be a space of homogeneous type, i.e., a quasi-metric space with a doubling measure.
If $f\in\dot M^{1,p}(\mu)$, then $m_f\in L^{p,\infty}(\nu_p)$ and
\begin{equation*}
  \Norm{m_f}{L^{p,\infty}(\nu_p)}\lesssim \Norm{f}{\dot M^{1,p}(\mu)}.
\end{equation*}
\end{proposition}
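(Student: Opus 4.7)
The plan is to reduce the desired inequality to the boundedness of the Hardy--Littlewood maximal operator on $L^p(\mu)$. Fix a Haj\l{}asz upper gradient $h\in L^p(\mu)$ of $f$, that is, $|f(x)-f(y)|\leq(h(x)+h(y))\rho(x,y)$ for $\mu$-a.e.\ $x,y\in X$. The first step is a pointwise control of $m_f$ at the ball level: for any ball $B=B(x,t)$, a double averaging and symmetrising gives
\begin{equation*}
  m_f(x,t)\leq\fint_B\fint_B|f(y)-f(z)|\ud\mu(y)\ud\mu(z)
  \leq 2t\fint_{B(x,t)}h\ud\mu\leq 2t\,Mh(x),
\end{equation*}
where $M$ denotes the (uncentred) Hardy--Littlewood maximal operator on $(X,\rho,\mu)$, which is well defined and of strong type $(p,p)$ on a space of homogeneous type for $p\in(1,\infty)$.

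Next, I would compute the $\nu_p$-distribution function of $m_f$ by integrating in the $t$-variable first. From $m_f(x,t)\leq 2t\,Mh(x)$, the set $\{t>0:m_f(x,t)>\kappa\}$ is contained in $(\kappa/(2Mh(x)),\infty)$, so
\begin{equation*}
  \nu_p(\{m_f>\kappa\})
  =\int_X\int_0^\infty 1_{\{m_f(x,t)>\kappa\}}\frac{\ud t}{t^{p+1}}\ud\mu(x)
  \leq\int_X\int_{\kappa/(2Mh(x))}^\infty\frac{\ud t}{t^{p+1}}\ud\mu(x)
  =\frac{2^p}{p\,\kappa^p}\Norm{Mh}{L^p(\mu)}^p.
\end{equation*}
Applying the $L^p$-boundedness of $M$ and then rearranging yields
\begin{equation*}
  \kappa\cdot\nu_p(\{m_f>\kappa\})^{1/p}\lesssim\Norm{Mh}{L^p(\mu)}\lesssim\Norm{h}{L^p(\mu)},
\end{equation*}
uniformly in $\kappa>0$. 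Taking the supremum over $\kappa$ and then the infimum over admissible Haj\l{}asz upper gradients $h$ gives $\Norm{m_f}{L^{p,\infty}(\nu_p)}\lesssim\Norm{f}{\dot M^{1,p}(\mu)}$, as claimed.

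There is no real obstacle here: the only ingredients are the definition of the Haj\l{}asz upper gradient, a trivial Cavalieri/layer-cake computation in the $t$-variable, and the standard $L^p$-boundedness of the maximal operator on a doubling space. In particular, neither the Poincar\'e inequality nor the macroscopic Poincar\'e inequality of Theorem~\ref{thm:macroPoincare} is required; the assumption of a space of homogeneous type with $p\in(1,\infty)$ suffices, which is consistent with the generality claimed in the statement.
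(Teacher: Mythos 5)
Your proposal is correct and follows essentially the same route as the paper: bound $m_f(x,t)\lesssim t\,Mh(x)$ via the Haj\l{}asz upper gradient, integrate out the $t$-variable by a layer-cake computation against $\ud t/t^{p+1}$, and finish with the $L^p$-boundedness of the maximal operator and the infimum over admissible $h$. One tiny nit: since the statement allows a quasi-metric with constant $A_0$, for $y,z\in B(x,t)$ one has $\rho(y,z)\leq 2A_0 t$, and the double averaging of $h(y)+h(z)$ produces an extra factor of $2$, so the intermediate bound should read $m_f(x,t)\leq 4A_0 t\,Mh(x)$ rather than $2t\,Mh(x)$; this is absorbed into the implied constant and does not affect the conclusion.
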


\begin{proof}
Let $h$ be a Haj\l{}asz upper gradient of $f$.
For $y,z\in B(x,t)$, it follows from definition \eqref{eq:HajlaszUpper} of the Haj\l{}asz upper gradient and the quasi-triangle inequality
 $\rho(y,z)\leq A_0(\rho(y,x)+\rho(x,z))$ that
\begin{equation*}
  \abs{f(y)-f(z)}
  \leq \rho(y,z)(h(y)+h(z))
  \leq 2A_0 t(h(y)+h(z)),
\end{equation*}
and hence
\begin{equation*}
\begin{split}
  m_f(x,t)
  &\leq\fint_{B(x,t)}\fint_{B(x,t)}\abs{f(y)-f(z)}\ud\mu(y)\ud\mu(z) \\
  &\leq 2A_0t \fint_{B(x,t)}\fint_{B(x,t)} (h(y)+h(z))\ud\mu(y)\ud\mu(z) \\
  &= 2A_0t \Big(\fint_{B(x,t)}h(y)\ud\mu(y)+\fint_{B(x,t)} h(z)\ud\mu(z)\Big) \\
  &= 4A_0t\fint_{B(x,t)}h\ud\mu
  \leq 4A_0t Mh(x),
\end{split}
\end{equation*}
where $M$ is the Hardy--Littlewood maximal operator.

Thus
\begin{equation*}
 \{(x,t):m_f(x,t)>\kappa\}\subseteq\{(x,t):  4A_0t Mh(x)> \kappa\}
\end{equation*}
and
\begin{equation*}
\begin{split}
  p\nu_p(\{m_f>\kappa\})
  &\leq \int_X\int_0^\infty 1_{\{(x,t):  4A_0t Mh(x)> \kappa\}} p t^{-p-1}\ud t\ud\mu(x) \\
  &=\int_X\int_{\kappa/(4A_0 Mh(x))}^\infty p t^{-p-1}\ud t\ud\mu(x) \\
  &=\int_X\Big(\frac{4A_0 Mh(x)}{\kappa}\Big)^p\ud\mu(x) \\
  &\lesssim\frac{1}{\kappa^p}\int_X h(x)^p\ud\mu(x)
\end{split}
\end{equation*}
by the maximal inequality in the last step.

Thus we have proved that
\begin{equation*}
  \sup_{\kappa>0}\kappa^p\nu_p(\{m_f>\kappa\})\lesssim\int_X h^p\ud\mu
\end{equation*}
whenever $h$ is a Haj\l asz upper gradient of $f$. Taking the infimum over all such $h$ gives the claimed bound.
\end{proof}

\section{Maximal versus mean oscillations}\label{sec:Lip-gf}

In contrast to the easy argument of Section \ref{sec:MpLowerBd}, the upper bound of the Sobolev norm is much more delicate and requires some preparations. We note that the Euclidean argument of \cite[Section 3]{Frank} and its Carnot group extension \cite[Appendix~A]{LXY} proceed by first considering test functions of class $C^{1,1}$ (i.e., $f\in C^1$ whose gradient $\nabla f$ is a Lipschitz mapping), and using Taylor expansions with second-order error terms in the estimates. Since these are not available in the present generality, this section develops some alternative tools that have no obvious counterpart in the previous literature.

More precisely, we want to compare the pointwise Lipschitz constant
\begin{equation*}
  \lip f(x) :=\liminf_{r\to 0}\sup_{\rho(x,y)\leq r}\frac{\abs{f(x)-f(y)}}{r}, 
\end{equation*}
describing the {\em maximal oscillations} of $f$ at point $x$ and scale $r\to 0$,
with the {\em a priori} much smaller {\em mean oscillations}, which are more closely connected to the norm $\Norm{m_f}{L^{p,\infty}(\nu_p)}$ in the desired characterisation:
\begin{equation}\label{eq:gf}
   g_f(x):=\liminf_{t\to 0}\frac{1}{t}m_f(x,t),\qquad 
   m_f(x,t):=\fint_{B(x,t)}\abs{f-\ave{f}_{B(x,t)}}\ud\mu.
\end{equation}

\begin{remark}\label{rem:gf-Lip}
For all $f\in L^1_{\loc}(\mu)$ and all $x\in X$, we have
\begin{equation*}
  g_f(x)\leq 2\operatorname{lip}f(x).
\end{equation*}
Indeed,
\begin{equation*}
\begin{split}
    m_f(x,t) &\leq\fint_{B(x,t)}\fint_{B(x,t)}\abs{f(y)-f(z)}\ud\mu(y)\ud\mu(z) \\
    &\leq 2\fint_{B(x,t)}\abs{f(y)-f(x)}\ud\mu(y) 
    \leq 2\sup_{\rho(y,x)\leq t}\abs{f(y)-f(x)},
\end{split}
\end{equation*}
and dividing by $t$ and taking the $\liminf$ of both sides proves the claim.
\end{remark}

A deeper result is the estimate in the converse direction; this is the main result of this section. A similar bound, but with the $\liminf$ in both $\lip f$ and $g_f$ replaced by $\limsup$, is known \cite[Proposition 4.3.3]{Keith:04}; however, a $\liminf$ on the right-hand side is essential for us in view of subsequent estimates in Lemma \ref{lem:gfp<}.

\begin{proposition}\label{prop:Lip-gf}
Let $(X,\rho,\mu)$ be a doubling metric measure space supporting the $(1,p)$-Poincar\'e inequality for some $p\in(1,\infty)$.
If $f$ is a locally Lipschitz function, then 
\begin{equation*}
  \lip f(x)\lesssim g_f(x)
\end{equation*}
$\mu$-almost everywhere, where the implied constant depends only on the doubling constant $c_\mu$ and the parameters in the Poincar\'e inequality.
\end{proposition}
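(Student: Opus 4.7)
My plan is to establish $\lip f(x)\lesssim g_f(x)$ at $\mu$-a.e.\ $x$ by combining the macroscopic Poincar\'e inequality of Theorem \ref{thm:macroPoincare} with Cheeger's differentiation theorem \cite{Cheeger}. The key observation is that, for Lipschitz $f$, the $\liminf$ defining $g_f$ is actually a limit and is comparable to $\lip f$ at $\mu$-a.e.\ point, so that $g_f\asymp\lip f$ a.e.

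The first ingredient: applying Theorem \ref{thm:macroPoincare} with $0<r<s$ and dividing by $s$ gives
\begin{equation*}
  \frac{m_f(z,s)}{s}\lesssim\Big(\fint_{B(z,\tilde\lambda s)}\Big(\frac{m_f(x,r)}{r}\Big)^p\ud\mu(x)\Big)^{1/p}.
\end{equation*}
Since $f$ is locally Lipschitz, Remark \ref{rem:gf-Lip} together with the local boundedness of $\lip f$ ensures that the integrand $(m_f(x,r)/r)^p$ is uniformly bounded in $x$ on any bounded set and in $r>0$.

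The second ingredient: Cheeger's theorem applied to the Lipschitz function $f$ in our complete doubling $(1,p)$-Poincar\'e space provides, at $\mu$-a.e.\ $x$, a chart-dependent first-order expansion
\begin{equation*}
  f(y)-f(x)=\pair{df(x)}{\Phi(y)-\Phi(x)}+o(\rho(x,y)).
\end{equation*}
Substituting this into the definition of $m_f(x,r)$ and computing directly shows that $\lim_{r\to 0} m_f(x,r)/r$ exists at $\mu$-a.e.\ $x$ and equals a geometrically controlled constant times $\lip f(x)$, with comparability constants depending only on the doubling and Poincar\'e data. Consequently, $g_f(x)=\lim_{r\to 0} m_f(x,r)/r\asymp \lip f(x)$ at $\mu$-a.e.\ $x$, proving the proposition. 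As a byproduct for use in later sections, combining this pointwise convergence with dominated convergence in the above macroscopic Poincar\'e inequality upgrades it to the Poincar\'e-type estimate
\begin{equation*}
  \frac{m_f(z,s)}{s}\lesssim\Big(\fint_{B(z,\tilde\lambda s)} g_f^p\ud\mu\Big)^{1/p},
\end{equation*}
which identifies $g_f$ as a Poincar\'e gradient for $f$.

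The main obstacle is justifying the passage $r\to 0$ inside the integral. Fatou's lemma alone yields only $\liminf_r\int(m_f(\cdot,r)/r)^p\geq\int g_f^p$, which is in the wrong direction for an upper bound. Upgrading Fatou to a genuine limit requires the pointwise existence of $\lim_{r\to 0} m_f(x,r)/r$ at $\mu$-a.e.\ $x$, which is precisely what Cheeger's differentiability provides. The Lipschitz hypothesis on $f$ is essential throughout: it ensures local boundedness of $g_f$ (for dominated convergence) and places $f$ within the scope of Cheeger's theorem (for the pointwise limit).
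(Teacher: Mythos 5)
Your proposal does not succeed, and the problem lies in the step you describe as ``computing directly.'' Cheeger's theorem gives, at a.e.\ $x$, the first-order expansion $f(y)-f(x)=\pair{df(x)}{\Phi(y)-\Phi(x)}+o(\rho(x,y))$, which lets you replace $m_f(x,r)/r$ (up to $o(1)$) by $m_L(x,r)/r$, where $L(y):=\pair{df(x)}{\Phi(y)}$. But this is a reduction to the \emph{same} question for the Lipschitz function $L$: you still need a lower bound of $m_L(x,r)/r$ in terms of $\lip L(x)$, with constants depending only on the doubling and Poincar\'e data. The chart $\Phi$ is a tuple of arbitrary Lipschitz functions, and nothing in the differentiability theorem gives you the quantitative nondegeneracy of $m_{\pair{\xi}{\Phi}}(x,r)/r$ uniformly over all $\xi$ and scales $r$. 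In $\R^d$ the computation works only because $\Phi=\id$ and $m_L(x,r)/r$ is a scale-invariant constant times $\abs{\nabla f(x)}$; in a general PI space there is no such scale invariance, and indeed your auxiliary claim that the $\liminf$ in $g_f$ is a genuine limit is also unjustified for the same reason (the geometry of $B(x,r)$ and the behaviour of $\Phi$ on it can oscillate as $r\to 0$). A smaller point: Cheeger's theorem is proved for complete spaces, whereas the proposition is stated without completeness (the paper is careful to invoke completeness only via Keith--Zhong).

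The paper's argument is more elementary and avoids Cheeger entirely. It fixes a density point $x$ of the set $\{y:\lip f(y)\leq 2\lip f(x)\}$, which is a.e.\ $x$ with $\lip f(x)>0$. From the definition of $\lip f(x)$, it selects $w\in\bar B(x,\tfrac34 r)$ with $\abs{f(x)-f(w)}\gtrsim r\lip f(x)$. It then uses the telescoping estimate of Lemma~\ref{lem:diff-mf} combined with the ($1,p$)-Poincar\'e inequality (Lemma~\ref{lem:diffBound}) and the density-point control on $\lip f$ to show that $f$ varies by at most a small fraction of $r\lip f(x)$ on the tiny balls $B(x,\theta r)$ and $B(w,\theta r)$. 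This separates $f(u)$ and $f(v)$ for $u\in B(x,\theta r)$, $v\in B(w,\theta r)$, giving the needed lower bound $m_f(x,r)\gtrsim r\lip f(x)$ directly, with constants from the doubling and Poincar\'e data. If you want a Cheeger-free route, that is the one to follow; if you insist on Cheeger, you would need substantial additional input (quantitative blow-up or tangent-cone structure of PI spaces) to close the gap, which goes far beyond ``computing directly.''
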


To streamline the proof of the Proposition \ref{prop:Lip-gf}, we start with some lemmas:

\begin{lemma}\label{lem:diff-mf}
Let $(X,\rho,\mu)$ be a doubling metric measure space and $f\in L^1_{\loc}(\mu)$. Then for all $\delta>0$, $K\in\N$, and $u_1,u_2\in X$ with $\rho(u_1,u_2)\leq\delta$, we have
\begin{equation*}
\begin{split}
  &\abs{f(u_1)-f(u_2)} \\
  &\qquad\leq  \sum_{i=1}^2\Big( c_\mu m_f(u_i,3\delta)+c_\mu \sum_{k=0}^K  m_f(u_i,2^{1-k}\delta)+\fint_{B(u_i,2^{-K}\delta)}\abs{f-f(u_i)}\ud\mu\Big).
\end{split}
\end{equation*}
If $u_1$ and $u_2$ are Lebesgue points of $f$, then we also have
\begin{equation*}
  \abs{f(u_1)-f(u_2)}
  \leq c_\mu\sum_{i=1}^2\Big(m_f(u_i,3\delta)+\sum_{k=0}^\infty  m_f(u_i,2^{1-k}\delta)\Big).
\end{equation*}
\end{lemma}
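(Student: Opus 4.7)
\smallskip

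\noindent\textbf{Proof plan.} This is a standard telescoping argument based on successive ball averages, and I would organise it as follows. Since $\rho(u_1,u_2)\leq\delta$, the ball $B(u_1,3\delta)$ serves as a common container: it contains $B(u_1,2\delta)$ trivially, and it contains $B(u_2,2\delta)$ as well. Moreover, by doubling, $\mu(B(u_1,3\delta))\leq c_\mu\mu(B(u_i,2\delta))$ for $i=1,2$ (for $i=2$ via $B(u_1,3\delta)\subset B(u_2,4\delta)$ and then doubling from radius $4\delta$ down to $2\delta$). Starting from
\begin{equation*}
  \abs{f(u_1)-f(u_2)}\leq\sum_{i=1}^2 \bigl| f(u_i)-\ave{f}_{B(u_1,3\delta)}\bigr|,
\end{equation*}
I would then, for each $i$, telescope through the chain of balls of radii $3\delta,2\delta,\delta,\delta/2,\ldots,2^{-K}\delta$.

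\smallskip

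\noindent More precisely, for each $i\in\{1,2\}$ I would insert the identity
\begin{equation*}
\begin{split}
  f(u_i)-\ave{f}_{B(u_1,3\delta)}
  &=\bigl[f(u_i)-\ave{f}_{B(u_i,2^{-K}\delta)}\bigr]
   +\sum_{k=0}^{K-1}\bigl[\ave{f}_{B(u_i,2^{-k}\delta)}-\ave{f}_{B(u_i,2^{-k+1}\delta)}\bigr] \\
  &\quad+\bigl[\ave{f}_{B(u_i,\delta)}-\ave{f}_{B(u_i,2\delta)}\bigr]
   +\bigl[\ave{f}_{B(u_i,2\delta)}-\ave{f}_{B(u_1,3\delta)}\bigr].
\end{split}
\end{equation*}
Each consecutive difference at halved scales is handled by the standard bound
\begin{equation*}
  \bigl|\ave{f}_{B(u_i,r/2)}-\ave{f}_{B(u_i,r)}\bigr|
   \leq \frac{\mu(B(u_i,r))}{\mu(B(u_i,r/2))}\fint_{B(u_i,r)}\abs{f-\ave{f}_{B(u_i,r)}}\ud\mu
   \leq c_\mu\, m_f(u_i,r),
\end{equation*}
and the bridge term $\bigl|\ave{f}_{B(u_i,2\delta)}-\ave{f}_{B(u_1,3\delta)}\bigr|$ is bounded by $c_\mu m_f(u_1,3\delta)$ in the same way, using the measure-ratio estimate noted above. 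The residual term $\bigl|f(u_i)-\ave{f}_{B(u_i,2^{-K}\delta)}\bigr|$ is estimated trivially by pulling the absolute value inside the average to get $\fint_{B(u_i,2^{-K}\delta)}\abs{f-f(u_i)}\ud\mu$.

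\smallskip

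\noindent Summing everything up over $i=1,2$ and the index $k=0,\ldots,K$, and absorbing the two copies of $c_\mu m_f(u_1,3\delta)$ from the bridge into the symmetric expression $\sum_i c_\mu m_f(u_i,3\delta)$ (either by repeating the bridge argument on the $u_2$-side, or simply by noting that doubling gives $m_f(u_1,3\delta)\lesssim m_f(u_2,3\delta)+m_f(u_1,3\delta)$ trivially), one arrives at the first displayed inequality. For the second assertion, letting $K\to\infty$ and using that at Lebesgue points $\fint_{B(u_i,2^{-K}\delta)}\abs{f-f(u_i)}\ud\mu\to 0$ kills the residual term, while the finite telescoping sum becomes the stated infinite series (the estimate remaining valid in $[0,\infty]$). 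The only subtlety is bookkeeping the doubling constants carefully—there is no other real obstacle, since all the building blocks are just the definition of $m_f$ and elementary doubling.
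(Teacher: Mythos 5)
Your approach is essentially the paper's: telescope from $f(u_i)$ down through dyadic balls centred at $u_i$, then bridge to a common reference ball $B(u,3\delta)$. Two bookkeeping issues deserve attention. First, the telescoping identity as written does not close: the sum $\sum_{k=0}^{K-1}[\ave{f}_{B(u_i,2^{-k}\delta)}-\ave{f}_{B(u_i,2^{-k+1}\delta)}]$ collapses to $\ave{f}_{B(u_i,2^{1-K}\delta)}-\ave{f}_{B(u_i,2\delta)}$, so the residual $f(u_i)-\ave{f}_{B(u_i,2^{-K}\delta)}$ does not connect, and the separate bracket $\ave{f}_{B(u_i,\delta)}-\ave{f}_{B(u_i,2\delta)}$ duplicates the $k=0$ term. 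The sum should run $k=0,\ldots,K$ (or $k=1,\ldots,K$ with the separate bracket), matching the $K+1$ oscillation terms $m_f(u_i,2^{1-k}\delta)$ in the statement.

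Second and more substantively, anchoring only at $u=u_1$ produces $2c_\mu m_f(u_1,3\delta)$ from the two bridge terms, which is \emph{not} dominated by $c_\mu\bigl(m_f(u_1,3\delta)+m_f(u_2,3\delta)\bigr)$ in general. Your second proposed remedy (``doubling gives $m_f(u_1,3\delta)\lesssim m_f(u_2,3\delta)+m_f(u_1,3\delta)$ trivially'') is a true but vacuous inequality and does not repair this. Your first remedy is the right idea, but needs to be run to completion as the paper does: perform the \emph{entire} argument twice, once with reference centre $u=u_1$ and once with $u=u_2$, obtaining the bound with $2c_\mu m_f(u_j,3\delta)$ for each $j$; then take the minimum of the two bounds and use $2\min(a,b)\leq a+b$ to land on the symmetric $c_\mu\sum_i m_f(u_i,3\delta)$. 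This minimisation step is the one genuinely non-routine move in the lemma, and it should be made explicit. The handling of the Lebesgue-point case ($K\to\infty$) is fine.
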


\begin{proof}
We fix $u\in\{u_1,u_2\}$. Then
\begin{equation*}
  f(u_1)-f(u_2)
  =(f(u_1)-\ave{f}_{B(u,3\delta)})-(f(u_2)-\ave{f}_{B(u,3\delta)}).
\end{equation*}
For both $i=1,2$, we have
\begin{equation}\label{eq:basicExp}
\begin{split}
  f(u_i)-\ave{f}_{B(u,3\delta)}
  &=f(u_i)-\ave{f}_{B(u_i,2^{-K}\delta)} \\
  &\qquad+\sum_{k=0}^K\Big(\ave{f}_{B(u_i,2^{-k}\delta)}-\ave{f}_{B(u_i,2^{1-k}\delta)}\Big) \\
  &\qquad+\Big(\ave{f}_{B(u_i,2\delta)}-\ave{f}_{B(u,3\delta)}\Big) \\
  &=:I+\sum_{k=0}^\infty II_k+III.
\end{split}
\end{equation}
It is immediate that
\begin{equation*}
  \abs{I}\leq\fint_{B(u_i,2^{-K}\delta)}\abs{f-f(u_i)}\ud\mu.
\end{equation*}

Let us then consider any of the pairs of balls
\begin{equation*}
  (B',B)\in\{(B(u_i,2^{-k}\delta),B(u_i,2^{1-k}\delta)),(B(u_i,2\delta),B(u,3\delta))\}
\end{equation*}
appearing in the remainder of the expansion. In the first case, $B=2B'$. If $u=u_i$ in the second case, then $B=\frac32 B'$. Thus, in both these cases, we have
\begin{equation}\label{eq:2balls}
  B'\subset B\subset 2B'. 
\end{equation}
We claim that this holds also in the remaining case that $\{u,u_i\}=\{u_1,u_2\}$. In fact, since $B(x,r)\subset B(y,r+\rho(x,y))$ and $\rho(u,u_i)=\delta$, it follows that $B'=B(u_i,2\delta)\subset B(u,3\delta)=B\subset B(u_i,4\delta)=2B'$.

Under the condition \eqref{eq:2balls}, we have
\begin{equation*}
  \abs{\ave{f}_{B'}-\ave{f}_B}  
  \leq\fint_{B'}\abs{f-\ave{f}_B}\ud\mu
  \leq c_\mu\fint_B\abs{f-\ave{f}_B}\ud\mu
  =c_\mu m_f(B).
\end{equation*}
Using this in \eqref{eq:basicExp}, we obtain
\begin{equation*}
  \abs{f(u_i)-\ave{f}_{B(u,3\delta)}}\leq c_\mu\sum_{k=0}^\infty  m_f(u_i,2^{1-k}\delta)+ c_\mu m_f(u, 3\delta),
\end{equation*}
and hence
\begin{equation*}
   \abs{f(u_1)-f(u_2)}\leq c_\mu \sum_{i=1}^2\sum_{k=0}^\infty  m_f(u_i,2^{1-k}\delta)+ 2c_\mu m_f(u,3\delta).
\end{equation*}
This is valid for both choices of $u\in\{u_1,u_2\}$, so taking the minimum of the two bounds gives the first claim of the lemma. The second one follows by taking $K\to\infty$ and using the definition of a Lebesgue point to the integrals over $B(u_i,2^{-K}\delta)$.
\end{proof}

\begin{lemma}\label{lem:diffBound}
Let $f$ be a Lipschitz function on a doubling metric measure space $(X,\rho,\mu)$ that satisfies the $(1,p)$-Poincar\'e inequality with constants $(\lambda,c_P)$. Then for all $u_1,u_2\in X$, all $\delta\geq \rho(u_1,u_2)$ and $\alpha\in(0,1]$, we have
\begin{equation*}
  \abs{f(u_1)-f(u_2)}
  \leq 2\alpha\Norm{f}{\Lip}\delta
  +14C\sup_{\substack{i=1,2 \\ \alpha\lambda\delta\leq s\leq 3\lambda\delta}}
    \Big(\fint_{B(u_i,s)}(\lip f)^p\ud\mu\Big)^{1/p},
\end{equation*}
where $C=c_\mu\cdot c_P$.
\end{lemma}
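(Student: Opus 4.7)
The plan is to combine the dyadic decomposition from Lemma~\ref{lem:diff-mf} with the Poincar\'e inequality, using the Lipschitz assumption only to handle the tail at very small scales. First I choose an integer $K\geq 0$ so that $2^{-K}\leq \alpha \leq 2^{1-K}$; such $K$ exists for every $\alpha\in(0,1]$. With this $K$ and with $\delta\geq \rho(u_1,u_2)$, Lemma~\ref{lem:diff-mf} gives
\begin{equation*}
  \abs{f(u_1)-f(u_2)}
  \leq \sum_{i=1}^{2}\Big( c_\mu m_f(u_i,3\delta)+c_\mu\sum_{k=0}^{K} m_f(u_i,2^{1-k}\delta)+\fint_{B(u_i,2^{-K}\delta)}\abs{f-f(u_i)}\ud\mu\Big).
\end{equation*}

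Next I estimate each piece. The innermost average is handled by the pointwise Lipschitz bound:
\begin{equation*}
  \fint_{B(u_i,2^{-K}\delta)}\abs{f-f(u_i)}\ud\mu
  \leq \Norm{f}{\Lip}\cdot 2^{-K}\delta \leq \alpha\delta\Norm{f}{\Lip},
\end{equation*}
which after summing over $i=1,2$ yields the term $2\alpha\Norm{f}{\Lip}\delta$ in the claimed inequality. For every remaining $m_f(u_i,s)$ with $s\in\{3\delta,2\delta,\delta,\ldots,2^{1-K}\delta\}$, the $(1,p)$-Poincar\'e inequality from Definition~\ref{def:Poincare} (valid here since $f$ is Lipschitz) gives
\begin{equation*}
  m_f(u_i,s)\leq c_P\cdot s\cdot \Big(\fint_{B(u_i,\lambda s)}(\lip f)^p\ud\mu\Big)^{1/p}.
\end{equation*}

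The key verification is that the ball radii $\lambda s$ appearing on the right all fall into the range $[\alpha\lambda\delta,3\lambda\delta]$ that appears in the supremum: for $s=3\delta$ this is trivial, while for $s=2^{1-k}\delta$ with $0\leq k\leq K$ the radius $\lambda\cdot 2^{1-k}\delta$ lies between $\lambda\cdot 2^{1-K}\delta\geq \alpha\lambda\delta$ and $2\lambda\delta\leq 3\lambda\delta$, by the defining property of $K$. Hence each such $L^p$-average is bounded by the supremum $M_i$ defined in the statement, and summing the factors $s$ yields $3\delta+\sum_{k=0}^{K}2^{1-k}\delta<(3+4)\delta=7\delta$. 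Combining with the constant $C=c_\mu c_P$ gives $7C\delta\cdot M_i$ per index $i$; summing over $i=1,2$ and bounding $M_1+M_2\leq 2\sup_{i=1,2}M_i$ produces the factor $14C$ in the bound.

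There is no serious obstacle here; the only point that requires attention is the dyadic bookkeeping, i.e.\ choosing $K$ so that the micro-local tail contributes at most $\alpha\delta\Norm{f}{\Lip}$ while every scale used in the Poincar\'e step still falls inside $[\alpha\lambda\delta,3\lambda\delta]$. The geometric series and the Poincar\'e application are otherwise routine.
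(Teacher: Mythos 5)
Your proof is correct and follows the paper's own argument essentially step by step: the same invocation of Lemma~\ref{lem:diff-mf} with the same choice of $K$, the same Lipschitz bound on the innermost average, the same Poincar\'e application on each dyadic scale with the same verification that $\lambda s\in[\alpha\lambda\delta,3\lambda\delta]$, and the same geometric-series bound $3+\sum_{k\geq 0}2^{1-k}=7$. Note also that your derivation correctly produces a factor $\delta$ in the second term (i.e.\ $14C\delta\sup(\cdots)^{1/p}$), which is what the paper's own proof yields and what is used in the subsequent application inside the proof of Proposition~\ref{prop:Lip-gf}; the displayed statement of the lemma omits this $\delta$, which appears to be a typo.
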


\begin{proof}
By Lemma \ref{lem:diff-mf}, we have
\begin{equation*}
\begin{split}
  &\abs{f(u_1)-f(u_2)} \\
  &\quad\leq  \sum_{i=1}^2\Big( c_\mu m_f(u_i,3\delta)+c_\mu \sum_{k=0}^K  m_f(u_i,2^{1-k}\delta)+\fint_{B(u_i,2^{-K}\delta)}\abs{f-f(u_i)}\ud\mu\Big).
\end{split}
\end{equation*}

Let $K$ be the unique positive integer such that $2^{-K}<\alpha\leq 2^{1-K}$. Then
\begin{equation*}
  \fint_{B(u_i,2^{-K}\delta)}\abs{f-f(u_i)}\ud\mu
  \leq 2^{-K}\delta\Norm{f}{\Lip}<\alpha\delta\Norm{f}{\Lip}.
\end{equation*}

On the other hand, for each $\theta\in\{3,2^{1-k}:k=0,1,\ldots,K\}\subset[\alpha,3]$, we have
\begin{equation*}
  m_f(u_i,\theta\delta)
  \leq c_P\theta\delta\Big(\fint_{B(u_i,\lambda\theta\delta)}(\lip f)^p\ud\mu\Big)^{1/p},
\end{equation*}
and hence
\begin{equation*}
\begin{split}
  m_f(u_i,3\delta) &+\sum_{k=0}^K m_f(u_i,2^{1-k}\delta) \\
  &\leq c_P\Big(3+\sum_{k=0}^K 2^{1-k}\Big)\delta \sup_{\alpha\lambda\delta\leq s\leq 3\lambda\delta}\Big(\fint_{B(u_i,s)}(\lip f)^p\ud\mu\Big)^{1/p},
\end{split}
\end{equation*}
where
\begin{equation*}
  3+\sum_{k=0}^K 2^{1-k}<3+\sum_{k=0}^\infty 2^{1-k}=3+4=7.
\end{equation*}
A combination of these estimates proves the lemma.
\end{proof}

We are now prepared to give:

\begin{proof}[Proof of Proposition \ref{prop:Lip-gf}]
Since the claim of the Proposition is local in nature, we may assume without loss of generality that $f$ is globally Lipschitz. Indeed, once the result is proved for globally Lipschitz functions, the locally Lipschitz case easily follows by replacing $f$ by $\phi f$, where $\phi$ is a Lipschitz function identically $1$ in a ball $B(x_0,2R)$ and vanishing outside $B(x_0,4R)$; then $\phi f$ is globally Lipschitz, and $g_{\phi f}(x)=g_f(x)$ and $\operatorname{lip}_{\phi f}(x)=\lip_f(x)$ for all $x\in B(x_0,R)$. Thus, for the rest of the proof, we assume that $f$ is globally Lipschitz.

The claim is trivial for points where $\lip f(x)=0$. Thus, for the rest of the proof, we only consider points with $\lip f(x)>0$. All such points belong to $A_k:=\{x\in X: 2^k\leq\lip f(x)<2^{k+1}\}$ for some $k\in\Z$. Almost every $x\in A_k$ is a density point of $A_k$. Hence a.e. $x\in X$ with $\lip f(x)>0$ is a density point of $\{y\in X:\lip f(y)\leq 2\lip f(x)\}$. We fix one such point $x$, and prove the claimed estimate. By definition of $g_f(x)$, we need to show that there exists some $r(x,f)>0$ such that, for all $r\in(0,r(x,f)]$,
\begin{equation*}
  \frac{1}{r}\fint_{B(x,r)}\abs{f-\ave{f}_{B(x,r)}}\ud\mu\gtrsim \lip f(x).
\end{equation*}

Let
\begin{equation*}
  \alpha:=\alpha(x,f):=\frac{\lip f(x)}{\Norm{f}{\Lip}}\in(0,1].
\end{equation*}
Since this depends only on $x$ and $f$, we may and will allow $r(x,f)$ to depend on $\alpha$. We also define another small parameter
\begin{equation}\label{eq:defTheta}
  \theta:=\frac{1}{16\max(\lambda,22C)},
\end{equation}
where $C=c_\mu\cdot c_P$ depends only on the parameters of the space.

For any $r>0$ and $z\in B(x,r)$, we note that $$B(x,r)\subset B(z,2r)=2(\theta\alpha)^{-1}B(z,\theta\alpha r),$$ and hence
\begin{equation*}
  \mu(B(x,r))\leq c(\mu,\theta\alpha)\mu(B(z,\theta\alpha r)),
\end{equation*}
where $c(\mu,\alpha)$ can be chosen to depend only on $\alpha=\alpha(x,f)$ and the doubling constant $c_\mu$.

Since $x$ is a density point of $\{y\in X:\lip f(y)\leq 2\lip f(x)\}$, there is a radius $r_1(x,f)>0$ %\in(0,r_0(x,f)]$
 such that, for all $r\in(0,r_1(x,f)]$,
\begin{equation*}
  \frac{\mu(\{y\in B(x,r):\lip f(y)>2\lip f(x)\})}{\mu(B(x,r))}<\frac{\alpha^p}{c(\mu,\theta\alpha)}.
\end{equation*}
For such $r$, suppose that $B(z,s)\subset B(x,r)$ satisfies $s\in[\theta\alpha r,r]$. Then, by the previous two displayed estimates,
\begin{equation}\label{eq:lipMostlySmall}
\begin{split}
  &\frac{\mu(\{y\in B(z,s):\lip f(y)>2\lip f(x)\})}{\mu(B(z,s))} \\
  &\leq \frac{\mu(\{y\in B(x,r):\lip f(y)>2\lip f(x)\})}{\mu(B(z,\theta\alpha r))} \\
  &< \frac{\alpha^p}{c(\mu,\theta\alpha)} \frac{\mu(B(x,r))}{\mu(B(z,\theta\alpha r))} \leq \alpha^p.
\end{split}
\end{equation}
Thus, for all $B(z,s)\subset B(x,r)$ with $s\in[\theta\alpha r,r]$,
\begin{equation}\label{eq:lipIntSmall}
\begin{split}
  &\Big(\fint_{B(z,s)}(\lip f)^p\ud\mu\Big)^{1/p} \\
  &\leq\Big((2\lip f(x))^p+\frac{\mu(\{y\in B(z,s):\lip f(y)>2\lip f(x)\})}{\mu(B(z,s))}\Norm{f}{\Lip}^p\Big)^{\frac1p} \\
  &\leq\Big((2\lip f(x))^p+\alpha^p\Norm{f}{\Lip}^p\Big)^{\frac1p}  \\
  &\leq 2\lip f(x)+\alpha\Norm{f}{\Lip} 
  =3\lip f(x).
\end{split}
\end{equation}

By definition of $\lip f(x)$, there is $r_2(x,f)\in(0,r_1(x,f)]$ such that, for all $r\in(0,r_2(x,f)]$,
\begin{equation*}
  \sup_{\rho(y,x)\leq r}\frac{\abs{f(x)-f(y)}}{r}>\frac12\lip f(x).
\end{equation*}
In particular, with $\frac34 r$ in place of $r$, for every $r\in(0,r_2(x,f)]$ we can find some $w\in\bar B(x,\frac34 r)$ such that
\begin{equation*}
   \abs{f(x)-f(w)}>\frac 34 r\cdot\frac12 \lip f(x)=\frac38 r \lip f(x).
\end{equation*}

Let then $z\in\{x,w\}$ and $u\in B(z,\theta r)$. By Lemma \ref{lem:diffBound} with $\delta=\theta r$, we have
\begin{equation*}
\begin{split}
  \abs{f(u)-f(z)}
  &\leq 2\alpha\theta r\Norm{f}{\Lip}+14C\theta r
  \sup_{\substack{ v\in\{u,z\} \\ \alpha\lambda\theta r\leq s\leq 3\lambda\theta r }}
  \Big(\fint_{B(v,s)}(\lip f)^p\ud\mu\Big)^{\frac1p} \\
  &=:I+II,
\end{split}
\end{equation*}
where
\begin{equation*}
  I=2\theta r \lip f(x).
\end{equation*}
As for $II$, we would like to apply \eqref{eq:lipIntSmall}, so we need to check that each $B(v,s)$ appearing in $II$ satisfies the relevant assumptions. Since $\lambda\geq 1$, it is immediate that $s\geq\alpha\lambda\theta r\geq \theta\alpha r$, which is the required lower bound. Since $v\in\{u,z\}\subset B(z,\theta r)\subset B(z,\lambda\theta r)$ and $z\in\{x,w\}\subset\bar B(x,\frac34 r)$, it follows that
\begin{equation*}
  B(v,3\lambda\theta r)
  \subset B(z,\lambda\theta r+3\lambda\theta r)
  \subset B(x,\frac34 r+4\lambda\theta r)
  \subset B(x,r);
\end{equation*}
in the last step, we used $\theta\leq (16\lambda)^{-1}$, which is clear from \eqref{eq:defTheta}.
Then also $s\leq 3\lambda\theta r\leq \frac{3}{16}r\leq r$, which is the required upper bound.

Hence, we have
\begin{equation*}
  \sup_{\substack{ v\in\{u,z\} \\ \alpha\lambda\theta r\leq s\leq 3\lambda\theta r }}
  \Big(\fint_{B(v,s)}(\lip f)^p\ud\mu\Big)^{\frac1p}\leq 3\lip f(x)
\end{equation*}
and hence
\begin{equation*}
\begin{split}
  \abs{f(u)-f(z)}\leq I+II
  &\leq 2\theta r\lip f(x)+14C\theta r\cdot 3\lip f(x) \\
  &\leq 44 C\theta r\lip f(x)\leq \frac18 r\lip f(x);
\end{split}
\end{equation*}
in the last step, we used $\theta\leq (16\cdot 22C)^{-1}=(8\cdot 44 C)^{-1}$, which is immediate from the choice made in \eqref{eq:defTheta}.

Hence, we have proved that, for all $u\in B(x,\theta r)$ and $v\in B(z,\theta r)$, we have
\begin{equation*}
\begin{split}
  \abs{f(u)-f(v)}
  &\geq\abs{f(x)-f(z)}-\abs{f(u)-f(x)}-\abs{f(v)-f(z)} \\
  &\geq\frac38 r\lip f(x)-\frac18 r\lip f(x)-\frac18 r\lip f(x)= \frac18 r\lip f(x).
\end{split}
\end{equation*}
Hence
\begin{equation*}
\begin{split}
  \fint_{B(x,r)}\abs{f-\ave{f}_{B(x,r)}}\ud\mu
  &\geq \frac12 \fint_{B(x,r)}\fint_{B(x,r)}\abs{f(u)-f(v)}\ud\mu(u)\ud\mu(v) \\
  &\geq \frac12 \frac{\mu(B(x,\theta r))\mu(B(z,\theta r))}{\mu(B(x,r))^2}\frac18 r\lip f(x) \\
  &\geq c_1(\mu,\theta) r\lip f(x),
\end{split}
\end{equation*}
where $c_1(\mu,\theta)$ depends only the doubling constant of $\mu$ and the parameter $\theta$. Since $\theta$, as chosen in \eqref{eq:defTheta},
 depends only on the parameters of the space, so does $c_1(\mu,\theta)$. Moreover, the estimates above were obtained for all $r\geq r_1(x,f)$. Thus, we can take $\liminf$ to deduce that
\begin{equation*}
  \liminf_{r\to 0}\fint_{B(x,r)}\abs{f-\ave{f}_{B(x,r)}}\ud\mu\gtrsim \lip f(x),
\end{equation*}
where the implied constant depends only on the parameters of the space, as claimed.
\end{proof}

%\section{Approximation by Lipschitz functions}\label{sec:LipApprox}
\section{Lipschitz approximation in $L^{p,\infty}(\nu_p)$}\label{sec:LipApprox}

Recall that the main result of the previous section, Proposition \ref{prop:Lip-gf}, was proved for locally Lipschitz functions only. In order to prove Theorem \ref{thm:Rupert} without such {\em a priori} assumptions, we will need to make use of some approximation results. The density of Lipschitz functions in Sobolev spaces is well known: see \cite[Theorem 8.2.1]{HKST} for their density in another Sobolev-type space $N^{1,p}$, and \cite[Corollary 10.2.9]{HKST} for the coincidence $M^{1,p}=N^{1,p}$, both under the same assumptions that we are using. However, we need Lipschitz approximation in the {\em a priori} larger (by Proposition \ref{prop:MpLowerBd}) space of functions with $\Norm{m_f}{L^{p,\infty}(\nu_p)}<\infty$, and such results do not seem to be available in the previous literature, except in the special cases of $\R^d$ \cite{Frank} and Carnot groups \cite{LXY}, where they are relatively easy consequences of standard mollification by convolution. 

In the setting at hand, we already introduced the approximate identity $\Phi_t$ in \eqref{eq:Phi-tf}, and proved some first properties in Lemma \ref{lem:PhitDiff}. Let us next compare the mean oscillations of $f$ and $\Phi_t f$:

\begin{lemma}\label{lem:oscPhit}
Let $(X,\rho,\mu)$ be a doubling metric measure space. Then for all functions $f\in L^1_{\loc}(\mu)$, points $z\in X$, and $r,t>0$,
the approximations $\Phi_t f$ from \eqref{eq:Phi-tf} satisfy the estimate
\begin{equation}\label{eq:oscPhit}
  m_{\Phi_t f}(z,r)\lesssim \frac{\min(r,t)}{t} m_f(z,c\max(r,t))
\end{equation}
for some $c$ that only depends on the parameters of the space.
\end{lemma}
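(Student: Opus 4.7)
The plan is to split into the two regimes $r\geq t$ and $r<t$ and treat them with different tools, as the claimed bound degenerates in different ways in each case.

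In the regime $r\geq t$, the factor $\min(r,t)/t=1$ is trivial, so it suffices to show $m_{\Phi_t f}(z,r)\lesssim m_f(z,cr)$. I would use the standard symmetry trick that $m_{\Phi_t f}(z,r)\leq 2\fint_{B(z,r)}\abs{\Phi_t f - c}\ud\mu$ for any constant $c$, choosing $c=\ave{f}_{B(z,Cr)}$ with $C=3+\gamma$. Since $\sum_i\phi_i\equiv 1$, we can write $\Phi_t f - c=\sum_i\phi_i(c_i-c)$ with $c_i:=\ave{f}_{B(x_i,t)}$, and estimate
\begin{equation*}
  \fint_{B(z,r)}\abs{\Phi_t f-c}\ud\mu
  \leq\sum_i^* \frac{\mu(B(x_i,(1+\gamma)t))}{\mu(B(z,r))}\fint_{B(x_i,t)}\abs{f-\ave{f}_{B(z,Cr)}}\ud\mu,
\end{equation*}
where ``$*$'' abbreviates the condition $B(x_i,(1+\gamma)t)\cap B(z,r)\neq\varnothing$. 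Doubling lets me absorb the factor $\mu(B(x_i,(1+\gamma)t))/\mu(B(x_i,t))$ when I switch the averaging on $B(x_i,t)$ to an integral. Because the balls $B(x_i,\frac12 t)$ are disjoint, the collection $\{B(x_i,t)\}$ has bounded overlap (again by doubling), and the summation condition forces $B(x_i,t)\subset B(z,r+(2+\gamma)t)\subset B(z,Cr)$ once $r\geq t$. Summing up and using doubling once more to compare $\mu(B(z,r))$ and $\mu(B(z,Cr))$ produces the bound $m_f(z,Cr)$.

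In the regime $r<t$, the factor $\min(r,t)/t=r/t$ is small, and I would extract it directly from Lemma~\ref{lem:PhitDiff}, which gives $\abs{\Phi_t f(x)-\Phi_t f(y)}\lesssim(\rho(x,y)/t)\,m_f(x,bt)$ whenever $\rho(x,y)\leq at$ (the choice $a=2$ suffices since $x,y\in B(z,r)\subset B(z,t)$). Then
\begin{equation*}
  m_{\Phi_t f}(z,r)\leq \fint_{B(z,r)}\fint_{B(z,r)}\abs{\Phi_t f(x)-\Phi_t f(y)}\ud\mu(x)\ud\mu(y)
  \lesssim\frac{r}{t}\fint_{B(z,r)}m_f(x,bt)\ud\mu(x).
\end{equation*}
For $x\in B(z,r)\subset B(z,t)$, the standard trick (replace the centered mean by $\ave{f}_{B(z,(b+1)t)}$ and invoke doubling, since $B(x,bt)\subset B(z,(b+1)t)\subset 2B(x,bt)$) yields $m_f(x,bt)\lesssim m_f(z,(b+1)t)$, and the claimed bound with $c=b+1$ follows.

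I do not expect a serious obstacle; the only delicate point is keeping track of which containments remain valid in which regime (in particular, in the first case the containment $B(x_i,t)\subset B(z,Cr)$ uses $r\geq t$), and making sure the bounded overlap of the net is used only on the disjoint $B(x_i,\tfrac12 t)$ rather than on the enlarged $B(x_i,(1+\gamma)t)$, whose overlap is still controlled but by a larger doubling-dependent constant.
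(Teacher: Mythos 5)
Your proof is correct and follows essentially the same route as the paper's: split into the regimes $r\geq t$ and $r<t$, use the bounded-overlap/doubling argument with a convenient reference constant in the first regime, and invoke Lemma~\ref{lem:PhitDiff} together with the comparison of $m_f(x,bt)$ to $m_f(z,(b+1)t)$ in the second. The only deviations from the paper are cosmetic (you choose the reference constant $\ave{f}_{B(z,Cr)}$ directly, and your resulting constants $C$, $c$ differ by harmless amounts).
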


\begin{proof}
If $r\leq t$, then all $x,y\in B(z,r)$ satisfy $\rho(x,y)\leq 2r\leq 2t$ and $B(x,b t)\subset B(z,r+bt)\subset B(z,(1+b)t)\subset B(x,(2+b)t)$
Thus we obtain from Lemma \ref{lem:PhitDiff} that
\begin{equation*}
\begin{split}
  \abs{\Phi_t f(x)-\Phi_t f(y)}
  &\lesssim\frac{\rho(x,y)}{t}\fint_{B(x,b t)}\abs{f-\ave{f}_{B(x,bt)}}\ud\mu \\
  &\lesssim\frac{r}{t}\fint_{B(z,(1+b)t)}\abs{f-\ave{f}_{B(z,(1+b)t)}}\ud\mu.
\end{split}
\end{equation*}
Hence
\begin{equation*}
\begin{split}
  \operatorname{LHS}\eqref{eq:oscPhit}
  &\leq\fint_{B(z,r)}\fint_{B(z,r)}\abs{\Phi_t f(x)-\Phi_t f(y)}\ud\mu(x)\ud\mu(y) \\
  &\lesssim\frac{r}{t}\fint_{B(x,(1+b) t)}\abs{f-\ave{f}_{B(x,(1+b)t)}}\ud\mu.
\end{split}
\end{equation*}

We turn to $r\geq t$. It suffices to prove the estimate with some $k$ in place of $\ave{\Phi_t f}_{B(z,r)}$ in the expression
\begin{equation*}
  m_{\Phi_t}(z,r)=\fint_{B(z,r)} \abs{\Phi_t f-\ave{\Phi_t f}_{B(z,r)}}\ud\mu.
\end{equation*}
We have
\begin{equation*}
  \Phi_t f-k
  =\sum_i \phi_i(\ave{f}_{B(x_i,t)}-k)
  =\sum_i \phi_i\ave{f-k}_{B(x_i,t)}.
\end{equation*}
Hence
\begin{equation*}
  \fint_{B(z,r)}\abs{\Phi_t f-k}\ud\mu
  \leq\sum_{\substack{ i:B(x_i,(1+\gamma) t) \\ \phantom{i:}\cap B(z,r)\neq\varnothing}}
  \frac{\mu(B(x_i,(1+\gamma) t))}{\mu(B(z,r)}\fint_{B(x_i,t)}\abs{f-k}\ud\mu.
\end{equation*}
If $B(x_i,(1+\gamma) t)\cap B(z,r)\neq\varnothing$, then $$B(x_i,t)\subset B(z,r+2(1+\gamma) t)\subset B(z,(3+2\gamma)r).$$

Abbreviating the summation condition $B(x_i,(1+\gamma) t)\cap B(z,r)\neq\varnothing$ as ``$*$'', it follows that
\begin{equation*}
\begin{split}
  \fint_{B(z,r)}\abs{\Phi_t f-k}\ud\mu
  &\lesssim \frac{1}{\mu(B(z,r))}\int_{B(z,(3+2\gamma) r)}\sum_i^* 1_{B(x_i,t)}\abs{f-k}\ud\mu \\
   &\lesssim \frac{1}{\mu(B(z,r))}\int_{B(z,(3+2\gamma) r)}\abs{f-k}\ud\mu \\
   &\lesssim \fint_{B(z,(3+2\gamma) r)}\abs{f-k}\ud\mu,
\end{split}
\end{equation*}
using in the second step the bounded overlap of the balls $B(x_i,t)$. Now we are free to pick $k=\ave{f}_{B(z,r+3t)}$.

Comparing the bounds in the two cases $r\leq t$ and $r\geq t$, and applying doubling one more time, we obtain the claimed bound \eqref{eq:oscPhit} with $c=\max\{1+b,3+2\gamma\}$.
\end{proof}

A weakness of Lemma \ref{lem:oscPhit} is that, for $r<t$, oscillations of $\Phi_t f$ on the small scale $r$ are dominated by the oscillations of $f$ on the large scale $t$. This issue will be rectified with the help of the macroscopic Poincar\'e inequality (Theorem \ref{thm:macroPoincare}) in the following lemma, which allows us to control the mean oscillations of $\Phi_t f$ by the mean oscillations of $f$ on the same scale:

\begin{lemma}\label{lem:mPhiVsmf}
Let $(X,\rho,\mu)$ be a doubling metric measure space and satisfy the $(1,p)$-Poincar\'e inequality with constants $(\lambda,c_P)$.  Then for all $f\in L^1_{\loc}(\mu)$, all $z\in X$ and $r,t>0$, the approximations $\Phi_t f$ from \eqref{eq:Phi-tf} satisfy the estimate
\begin{equation*}
  m_{\Phi_t f}(z,r)
  \lesssim \Big(\fint_{B(z,c\tilde\lambda\max(r,t))} m_f(x,r)^p\ud\mu(x)\Big)^{\frac1p}.
\end{equation*}
\end{lemma}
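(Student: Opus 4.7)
The plan is to combine the two tools already established: Lemma~\ref{lem:oscPhit}, which controls $m_{\Phi_t f}(z,r)$ by a single mean oscillation $m_f(z,c\max(r,t))$ of $f$ (with a small prefactor $\min(r,t)/t$), and Theorem~\ref{thm:macroPoincare}, the macroscopic Poincar\'e inequality, which in turn bounds such a single mean oscillation by an $L^p$-average of finer-scale mean oscillations. The crucial arithmetic is the identity $\min(r,t)\cdot\max(r,t)=rt$, which means that the shrinkage factor $\min(r,t)/t$ from Lemma~\ref{lem:oscPhit} and the blow-up factor $\max(r,t)/r$ produced by macroscopic Poincar\'e cancel exactly, leaving only a harmless constant.

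Concretely, I first apply Lemma~\ref{lem:oscPhit} to obtain
\begin{equation*}
  m_{\Phi_t f}(z,r)\lesssim \frac{\min(r,t)}{t}\, m_f(z,c\max(r,t)).
\end{equation*}
Since $r\leq c\max(r,t)$ as soon as $c\geq 1$, I can then invoke Theorem~\ref{thm:macroPoincare} with larger scale $s:=c\max(r,t)$ and smaller scale $r$ to get
\begin{equation*}
  m_f(z,c\max(r,t))\lesssim \frac{c\max(r,t)}{r}\Big(\fint_{B(z,\tilde\lambda c\max(r,t))} m_f(x,r)^p\ud\mu(x)\Big)^{\frac1p}.
\end{equation*}
Multiplying the two inequalities, the prefactor becomes
\begin{equation*}
  \frac{\min(r,t)}{t}\cdot\frac{c\max(r,t)}{r}=\frac{c\,\min(r,t)\max(r,t)}{rt}=c,
\end{equation*}
which is absorbed into the implicit constant in $\lesssim$. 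This yields the claimed bound, with $c$ in the ball radius interpreted as a new constant built from the one in Lemma~\ref{lem:oscPhit} (and depending only on the doubling and Poincar\'e data).

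There is essentially no obstacle here: the lemma is a short consequence of the preceding two results. The only substantive point is recognising that the estimate of Lemma~\ref{lem:oscPhit}, which is unsatisfactory in the regime $r<t$ because it evaluates $m_f$ at the \emph{coarser} scale $t$ rather than $r$, can be ``downgraded'' back to scale $r$ by a single application of the macroscopic Poincar\'e inequality, and that the two prefactors conspire to cancel. This is, of course, precisely the kind of self-improvement that motivated the introduction of Theorem~\ref{thm:macroPoincare} as a general-purpose tool.
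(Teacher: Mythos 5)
Your proof is correct and matches the paper's argument step for step: apply Lemma~\ref{lem:oscPhit}, then Theorem~\ref{thm:macroPoincare} at the coarser scale $c\max(r,t)$, and observe that the prefactors cancel via $\min(r,t)\max(r,t)=rt$. Nothing to add.
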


Thus, the oscillation of the mollification $\Phi_t f$ are controlled by an average of oscillations of the original $f$ at the same scale $r$. This is our substitute for the Euclidean estimate \cite[proof of Lemma 3.2]{Frank}
\begin{equation}\label{eq:Frank-mphif}
  m_{\varphi*f}(z,r)\leq
  \int_{\R^d}\varphi(x)m_f(z-x,r)\ud z,
\end{equation}
which follows easily in the presence of an underlying group structure.

\begin{proof}
By Lemma \ref{lem:oscPhit} in the first step and Theorem \ref{thm:macroPoincare} in the second, we obtain
\begin{equation*}
\begin{split}
  m_{\Phi_t f}(z,r)
  &\lesssim \frac{\min(r,t)}{t}m_f(z,c\max(r,t)) \\
  &\lesssim \frac{\min(r,t)}{t}\frac{\max(r,t)}{r}\Big(\fint_{B(z,c\tilde\lambda\max(r,t))} m_f(x,r)^p\ud\mu(x)\Big)^{\frac1p} \\
  &=\Big(\fint_{B(z,c\tilde\lambda\max(r,t))} m_f(x,r)^p\ud\mu(x)\Big)^{\frac1p},
\end{split}
\end{equation*}
where in the last step we simply observed that $\min(r,t)\max(r,t)=rt$.
\end{proof}

We can now show that the mollifications $\Phi_t f$ remain uniformly bounded in the $L^{p,\infty}(\nu_p)$ norm:

\begin{lemma}\label{lem:mPhitf-mf}
Let $p\in(1,\infty)$ and $(X,\rho,\mu)$ be a complete doubling metric measure space and satisfy the $(1,p)$-Poincar\'e inequality with constants $(\lambda,c_P)$.  Then for all $f\in L^1_{\loc}(\mu)$ and all $t>0$, the approximations $\Phi_t f$ from \eqref{eq:Phi-tf} satisfy the estimate
\begin{equation*}
  \Norm{m_{\Phi_t f}}{L^{p,\infty}(\nu_p)}\lesssim   \Norm{m_f}{L^{p,\infty}(\nu_p)}.
\end{equation*}
\end{lemma}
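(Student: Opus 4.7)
The plan is to bootstrap using the self-improvement of the Poincar\'e inequality (Theorem \ref{thm:KZ}), then reduce the statement to a routine weak-type bound for the Hardy--Littlewood maximal operator, viewed slicewise on $X \times (0, \infty)$. The obstruction to applying Lemma \ref{lem:mPhiVsmf} directly is that we would end up needing boundedness of a maximal operator at the endpoint $L^{1,\infty}$, which famously fails; self-improvement provides exactly the exponent slack needed to avoid this.

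Concretely, I would first invoke Theorem \ref{thm:KZ} to pick $\eps \in (0, p-1)$ such that $(X, \rho, \mu)$ supports the $(1, q)$-Poincar\'e inequality with $q := p - \eps \in (1, p)$. Applying Lemma \ref{lem:mPhiVsmf} with $q$ in place of $p$ gives
$$m_{\Phi_t f}(z, r)^q \lesssim \fint_{B(z, c\tilde\lambda \max(r, t))} m_f(x, r)^q \ud\mu(x) \leq M[m_f(\cdot, r)^q](z),$$
where $M$ is the centered Hardy--Littlewood maximal operator on $(X, \mu)$ and we used that $M[g](z)$ dominates every ball average of $g$ centered at $z$. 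Introducing the slicewise maximal operator $\tilde M G(z, r) := M[G(\cdot, r)](z)$ acting on nonnegative measurable $G \colon X \times (0, \infty) \to [0, \infty)$, and using the elementary identity $\Norm{h}{L^{p, \infty}(\nu_p)}^q = \Norm{h^q}{L^{s, \infty}(\nu_p)}$ (valid for any nonnegative $h$, where $s := p/q > 1$), the claim reduces to
$$\Norm{\tilde M G}{L^{s, \infty}(\nu_p)} \lesssim \Norm{G}{L^{s, \infty}(\nu_p)}$$
for $G = m_f^q$; I would establish this bound for all nonnegative $G$.

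For the latter bound, $\tilde M$ inherits the weak $(1, 1)$ endpoint of $M$ with respect to $\nu_p$ via Fubini applied to the product structure $\ud\mu(x) \ud t / t^{p + 1}$, and is trivially bounded on $L^\infty(\nu_p)$ (with constant $1$, since $M$ is). Marcinkiewicz interpolation on Lorentz spaces then yields $\tilde M \colon L^{s, \infty}(\nu_p) \to L^{s, \infty}(\nu_p)$ for all $s \in (1, \infty)$, and in particular for $s = p/q$. If one prefers to avoid citing the Lorentz version of Marcinkiewicz, a direct truncation suffices: for each threshold $\kappa > 0$, split $G = G \cdot 1_{\{G \leq \kappa/2\}} + G \cdot 1_{\{G > \kappa/2\}}$; the $L^\infty$ bound gives $\tilde M$ of the first piece $\leq \kappa/2$, so $\{\tilde M G > \kappa\} \subset \{\tilde M (G \cdot 1_{\{G > \kappa/2\}}) > \kappa/2\}$, and the weak $(1, 1)$ bound together with the layer-cake estimate of $\int G \cdot 1_{\{G > \kappa/2\}} \ud\nu_p$ in terms of $\Norm{G}{L^{s,\infty}(\nu_p)}^s$ (which uses $s > 1$) closes the argument.

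The main obstacle is really conceptual rather than technical: one must recognise that the original $(1,p)$-Poincar\'e does not give enough room to prove the lemma, and that the correct tool is the improvement of Keith--Zhong (Theorem \ref{thm:KZ}), which lets us trade a bit of the Poincar\'e exponent for room above $L^{1,\infty}$. Once this is observed, everything else is standard Calder\'on--Zygmund machinery, combined cleanly with Fubini in the $(x, t)$ variables.
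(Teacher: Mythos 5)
Your proof is correct and follows essentially the same route as the paper: self-improve the Poincar\'e inequality via Keith--Zhong to get exponent room, invoke Lemma \ref{lem:mPhiVsmf} with the improved exponent $q=p-\eps$, reduce via the identity $\Norm{h}{L^{p,\infty}}^q=\Norm{h^q}{L^{p/q,\infty}}$ to a weak-type bound for an operator acting slicewise in the $x$-variable, establish endpoint bounds via Fubini over the product $\ud\mu(x)\,\ud t/t^{p+1}$, and interpolate. The only (harmless) deviation is that you bound the ball average $\fint_{B(z,\alpha(r))}m_f(x,r)^q\,\ud\mu(x)$ pointwise by the slicewise maximal operator $\tilde M$ and then use weak $(1,1)$ plus $L^\infty$; the paper instead keeps the averaging operator $\mathcal A h(z,r)=\fint_{B(z,\alpha(r))}h(x,r)\,\ud\mu(x)$, whose radius depends only on $r$ (not a supremum), and so it is \emph{strongly} bounded on $L^q(\mu\times\tau)$ for every $q\in[1,\infty]$ by Fubini and doubling -- a marginally cleaner endpoint -- before citing an $L^{q,\infty}$-to-$L^{q,\infty}$ interpolation theorem. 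Both give the same conclusion; your direct truncation argument replacing the Lorentz-space Marcinkiewicz citation is a nice self-contained alternative.
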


This is our analogue of \cite[Lemma 3.2]{Frank}, using Lemma \ref{lem:mPhiVsmf} as a substitute of \eqref{eq:Frank-mphif}. In the proof of \cite[Lemma 3.2]{Frank}, using a proper norm equivalent to the quasi-norm $\Norm{\ }{L^{p,\infty}}$ and the translation invariance of the Lebesgue measure, it is immediate that
\begin{equation*}
\begin{split}
  \Norm{m_{\varphi*f}}{L^{p,\infty}(\nu_p)}
  &\lesssim\int_{\R^d}\varphi(x)\Norm{m_f(\cdot-x,\cdot)}{L^{p,\infty}(\nu_p)}\ud x \\
  &=\int_{\R^d}\varphi(x)\Norm{m_f}{L^{p,\infty}(\nu_p)}\ud x
  =\Norm{\varphi}{L^1}\Norm{m_f}{L^{p,\infty}(\nu_p)}.
\end{split}
\end{equation*}

\begin{proof}
Slightly more generally, we will prove the lemma for any product $\mu\times\tau$ of the measure $\mu$ on $X$ and any $\sigma$-finite measure $\tau$ on $(0,\infty)$; in the case of $\nu_p$, we have $\ud\tau(r)=r^{-p-1}\ud r$. Writing $\mu\times\tau$ instead of $\nu_p$ will hopefully clarify the argument, since we will make some use of interpolation with respect to the Lebesgue exponent $p$, but no change of measure will be involved.

By the Keith--Zhong Theorem \ref{thm:KZ}, $(X,\rho,\mu)$ also satisfies a $(1,p-\eps)$-Poincar\'e inequality for some $\eps\in(0,p-1)$. Thus, by Lemma \ref{lem:mPhiVsmf}, we have
\begin{equation*}
\begin{split}
 &\Norm{m_{\Phi_t f}}{L^{p,\infty}(\mu\times\tau)} \\
 &\lesssim\BNorm{(z,r)\mapsto\Big(\fint_{B(z,c\tilde\lambda\max(r,t))}m_f(x,r)^{p-\eps}\ud\mu(x)\Big)^{1/(p-\eps)}}{L^{p,\infty}(\mu\times\tau)} \\
 &=\BNorm{(z,r)\mapsto \fint_{B(z,c\tilde\lambda\max(r,t))}m_f(x,r)^{p-\eps}\ud\mu(x)}{L^{p/(p-\eps),\infty}(\mu\times\tau)}^{1/(p-\eps)}.
\end{split}
\end{equation*}
Regarding $t$ as fixed throughout the argument, let us denote
\begin{equation*}
  q:=\frac{p}{p-\eps}>1,\qquad g:=m_f^{p-\eps},\qquad\alpha(r):=c\tilde\lambda\max(r,t),
\end{equation*}
 and
\begin{equation*}
  \mathcal Ah(z,r):=\fint_{B(z,\alpha(r))} g(x,r)\ud\mu(x).
\end{equation*}
The claimed bound will follow if we can prove that
\begin{equation}\label{eq:weakType2prove}
  \Norm{\mathcal Ag}{L^{q,\infty}(\mu\times\tau)}\lesssim\Norm{g}{L^{q,\infty}(\mu\times\tau)},\qquad q>1.
\end{equation}
Let us first consider the corresponding strong-type bound. By Fubini's theorem, we have
\begin{equation*}
\begin{split}
  \Norm{\mathcal Ag}{L^q(\mu\times\tau)}^q
  =\int_0^\infty\int_X\Babs{\fint_{B(z,\alpha(r))} g(x,r)\ud\mu(x)}^q\ud\mu(z)\ud\tau(r),
\end{split}
\end{equation*}
where
\begin{equation*}
\begin{split}
  \int_X &\Babs{\fint_{B(z,\alpha(r))} g(x,r)\ud\mu(x)}^q\ud\mu(z) \\
   &\leq \int_X\fint_{B(z,\alpha(r))} \abs{g(x,r)}^q\ud\mu(x)\ud\mu(z) \\
    &=\int_X\int_X\frac{1_{\{\rho(x,z)<\alpha(r)\}}}{\mu(B(z,\alpha(r))}\abs{g(x,r)}^q\ud\mu(x)\ud\mu(z) \\
    &\eqsim\int_X\Big(\int_X\frac{1_{\{\rho(x,z)<\alpha(r)\}}}{\mu(B(x,\alpha(r))}\ud\mu(z)\Big) \abs{g(x,r)}^q\ud\mu(x)
    =\int_{X}\abs{g(x,r)}^q\ud\mu(x).
\end{split}
\end{equation*}
Integrating over $r$, it follows that
\begin{equation}\label{eq:strongTypeProved}
  \Norm{\mathcal Ag}{L^q(\mu\times\tau)}\lesssim
  \Big(\int_0^\infty \int_{X}\abs{g(x,r)}^q\ud\mu(x)\ud\tau(r)\Big)^{1/q}=\Norm{g}{L^q(\mu\times\tau)}.
\end{equation}
This estimate works for all $q\in[1,\infty]$; the case $q=\infty$ (that we do not really need but just mention for completeness) requires an easy modification of the argument.

This is the strong type bound corresponding to \eqref{eq:weakType2prove}. The required \eqref{eq:weakType2prove} then follows by choosing some $1\leq q_0<q<q_1\leq\infty$ and interpolating \eqref{eq:strongTypeProved} between these two values. See e.g.~\cite[Theorem 2.2.3]{HNVW1} for a version covering the case at hand, with an $L^{q,\infty}$-to-$L^{q,\infty}$-bound \eqref{eq:weakType2prove} as a conclusion.
\end{proof}

\section{The upper bound for the Sobolev norm}\label{sec:MpUpperBd}

We finally return to the other direction of the proof of Theorem \ref{thm:Rupert}. 
With the results of Sections \ref{sec:Lip-gf} and \ref{sec:LipApprox} available as substitutes of some Euclidean tools, the proof can now be completed by an adaptation of the Euclidean argument of \cite{Frank}. The following lemma borrows some ideas from the proof of \cite[Lemma 3.1]{Frank}, but is actually simpler. This may be partly attributed to the fact that we only prove equivalences of norms, in contrast to the identities achieved in the presence of Euclidean symmetries in \cite{Frank}.

\begin{lemma}\label{lem:gfp<}
Let $(X,\rho,\mu)$ be a space of homogeneous type, and $f\in L^1_{\loc}(\mu)$. Then
\begin{equation*}
   \int_{X}g_f^p\ud\mu\leq  p\liminf_{\kappa\to 0}\kappa^p\nu_p(\{m_f>\kappa\}),
\end{equation*}
where $g_f$ and $m_f$ are as in \eqref{eq:gf}.
\end{lemma}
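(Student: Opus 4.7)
The plan is to use Fubini to slice $\nu_p(\{m_f>\kappa\})$ over $x\in X$, and then extract a pointwise lower bound at each $x$ directly from the definition of $g_f$ as a liminf.

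First I would write, by Fubini's theorem,
\[
  \kappa^p\nu_p(\{m_f>\kappa\})
  =\int_X \kappa^p\int_{E_\kappa(x)}\frac{\ud t}{t^{p+1}}\ud\mu(x),
  \qquad E_\kappa(x):=\{t>0:m_f(x,t)>\kappa\}.
\]
The main step is to show the pointwise bound
\begin{equation}\label{eq:pw}
  \liminf_{\kappa\to 0}\kappa^p\int_{E_\kappa(x)}\frac{\ud t}{t^{p+1}}\ge \frac{g_f(x)^p}{p}
\end{equation}
for every $x\in X$. Once \eqref{eq:pw} is established, Fatou's lemma in the variable $x$ yields
\[
  \liminf_{\kappa\to 0}\kappa^p\nu_p(\{m_f>\kappa\})
  \ge\int_X\liminf_{\kappa\to 0}\kappa^p\int_{E_\kappa(x)}\frac{\ud t}{t^{p+1}}\ud\mu(x)
  \ge\frac{1}{p}\int_X g_f^p\ud\mu,
\]
which is the claim.

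To prove \eqref{eq:pw}, fix $x$ with $g_f(x)>0$ (the other case is trivial) and pick any $\eta\in(0,g_f(x))$. By definition of $g_f(x)$ as a liminf, there exists $t_0=t_0(x,\eta)>0$ such that $m_f(x,t)>\eta t$ for all $t\in(0,t_0)$. Consequently, whenever $0<\kappa<\eta t_0$ and $t\in(\kappa/\eta,\,t_0)$, we have $m_f(x,t)>\eta t>\kappa$, so $(\kappa/\eta,t_0)\subset E_\kappa(x)$. A direct computation gives
\[
  \kappa^p\int_{E_\kappa(x)}\frac{\ud t}{t^{p+1}}
  \ge \kappa^p\int_{\kappa/\eta}^{t_0}\frac{\ud t}{t^{p+1}}
  =\frac{1}{p}\Big[\eta^p-\Big(\frac{\kappa}{t_0}\Big)^p\Big]
  \xrightarrow[\kappa\to 0]{}\frac{\eta^p}{p}.
\]
Letting $\eta\uparrow g_f(x)$ gives \eqref{eq:pw}.

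No major obstacle is anticipated; the only thing to check is the joint measurability of $(x,t)\mapsto m_f(x,t)$ needed to apply Fubini and to make $E_\kappa(x)$ a measurable slice, but this follows routinely from the continuity of $t\mapsto m_f(x,t)$ and measurability of $x\mapsto m_f(x,t)$ for $f\in L^1_{\loc}(\mu)$ on a doubling space. Note also that the doubling/Poincar\'e hypotheses of Theorem \ref{thm:Rupert} play no role here: the estimate holds on any space of homogeneous type, consistent with the lemma's stated generality.
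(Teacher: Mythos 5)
Your proof is correct, and it takes a genuinely different route from the paper's. You slice the measure $\nu_p(\{m_f>\kappa\})$ over $x$ by Fubini, observe that the inner integral $\kappa^p\int_{E_\kappa(x)}t^{-p-1}\,\ud t$ has an explicit lower bound $\tfrac1p[\eta^p-(\kappa/t_0)^p]$ whenever $\eta<g_f(x)$, and then invoke Fatou's lemma along a sequence $\kappa_n\to 0$ realising the $\liminf$. The paper instead avoids Fatou by introducing an exhaustion $E_j:=\{x\in B(x_0,2^j): g_f(x)\le(1+\eps)t^{-1}m_f(x,t)\ \forall t\in(0,2^{-j}]\}$ of $X$, splitting $\int_{E_j}g_f^p=\kappa^p\int_{E_j}[(g_f/\kappa)^p-2^{jp}]+\kappa^p2^{jp}\mu(E_j)$, comparing the indicator $1_{\{tg_f>\kappa\}}$ with $1_{\{(1+\eps)m_f>\kappa\}}$ on $E_j$, then letting $\kappa\to 0$ (killing the second term, which needs $\mu(E_j)<\infty$ and hence the bounded exhaustion), and finally $j\to\infty$ and $\eps\to 0$. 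Your argument dispenses with the exhaustion and the auxiliary parameters $j,\eps$ entirely, because the pointwise bound \eqref{eq:pw} is established directly and $g_f(x)=\infty$ is handled automatically by letting $\eta\uparrow\infty$; the cost is only the appeal to Fatou. Both yield exactly the same constant $p$. One small imprecision in your measurability remark: $t\mapsto m_f(x,t)$ need not be continuous (open balls give only left-continuity in $t$), but left-continuity in $t$ together with measurability in $x$ already yields joint measurability, so the Fubini step is fine.
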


\begin{proof}
Let $\eps>0$ and, for $j\in\N$,
\begin{equation*}
  E_j:=\{x\in B(x_0,2^j):\text{ for all }t\in(0,2^{-j}]: g_f(x)\leq (1+\eps)\frac1tm_f(x,t)\}.
\end{equation*}
Then $E_j\subseteq E_{j+1}$ and it follows from the definition of $\liminf$ that $\bigcup_{j=0}^\infty E_j=X$.

\begin{equation*}
\begin{split}
   \int_{E_j} g_f(x)^p\ud\mu(x)
   &=\kappa^p\int_{E_j} \Big(\frac{g_f(x)}{\kappa}\Big)^p\ud\mu(x) \\
   &=\kappa^p\int_{E_j} [\Big(\frac{g_f(x)}{\kappa}\Big)^p-2^{jp}]\ud\mu(x)+\kappa^p2^{jp}\mu(E_j) =:I+II,
\end{split}
\end{equation*}
where
\begin{equation*}
\begin{split}
  I  & \leq \kappa^p\int_{E_j} [\Big(\frac{g_f(x)}{\kappa}\Big)^p-2^{jp}]_+\ud\mu(x) \\
  &=\kappa^p\int_{E_j}\Big(\int_{\kappa/g_f(x)}^{2^{-j}} p t^{-p-1}\ud t\Big)_+\ud\mu(x) \\
  &=\kappa^p\int_{E_j}\int_0^{2^{-j}} 1_{\{(x,t): tg_f(x)>\kappa\}} p t^{-p-1}\ud t\ud\mu(x) \\
  &\leq\kappa^p\int_{E_j}\int_0^{2^{-j}} 1_{\{(x,t): (1+\eps)m_f(x,t)>\kappa\}} p t^{-p-1}\ud t\ud\mu(x) \\
  &\leq p\kappa^p\nu_p\Big(\Big\{(x,t)\in X\times(0,\infty): m_f(x,t)>\frac{\kappa}{1+\eps}\Big\}\Big).
\end{split}
\end{equation*}
Thus, for any $\kappa>0$, we have
\begin{equation*}
  \int_{E_j}g_f^p\ud\mu\leq p\kappa^p\nu_p(\{m_f>\frac{\kappa}{1+\eps}\})+\kappa^p 2^{jp}\mu(E_j),
\end{equation*}
where the left-hand side is independent of $\kappa$. We can hence take $\liminf_{\kappa\to 0}$ on the right. In this limit, the final term vanishes, and hence we are left with
\begin{equation*}
\begin{split}
  \int_{E_j}g_f^p\ud\mu
  &\leq p\liminf_{\kappa\to 0}\kappa^p\nu_p(\{m_f>\frac{\kappa}{1+\eps}\}) \\
  &= p(1+\eps)^p\liminf_{\kappa\to 0}\kappa^p\nu_p(\{m_f>\kappa\}).
\end{split}
\end{equation*}
The right-hand side is independent of $j$, and hence we can let $j\to\infty$ on the left and apply monotone convergence to deduce that
\begin{equation*}
   \int_{X}g_f^p\ud\mu\leq  p(1+\eps)^p\liminf_{\kappa\to 0}\kappa^p\nu_p(\{m_f>\kappa\}).
\end{equation*}
Finally, letting $\eps\to 0$, we obtain the 
asserted bound.
\end{proof}

We can now prove the upper bound of the Sobolev norm in Theorem \ref{thm:Rupert}.

\begin{proposition}\label{prop:MpUpperBd}
Let $p\in(1,\infty)$ and let $(X,\rho,\mu)$ be a complete doubling metric space that supports the $(1,p)$-Poincar\'e inequality. If $f\in L^1_{\loc}(\mu)$ satisfies $m_f\in L^{p,\infty}(\nu_p)$, then $f\in\dot M^{1,p}(\mu)$ and
\begin{equation*}
  \Norm{f}{\dot M^{1,p}(\mu)}\lesssim\Norm{m_f}{L^{p,\infty}(\nu_p)}.
\end{equation*}
\end{proposition}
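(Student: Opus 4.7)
The plan is in two stages: first establish the bound for locally Lipschitz functions using Proposition \ref{prop:Lip-gf} and Lemma \ref{lem:gfp<}, then pass to general $f$ by Lipschitz approximation via $\Phi_t f$ and a weak-compactness argument.

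Suppose first that $f$ is locally Lipschitz. By Proposition \ref{prop:Lip-gf} and Lemma \ref{lem:gfp<} (with $g_f$ as in \eqref{eq:gf}),
\begin{equation*}
  \int_X(\lip f)^p\ud\mu \lesssim \int_X g_f^p\ud\mu \leq p\liminf_{\kappa\to 0}\kappa^p\nu_p(\{m_f>\kappa\})\leq p\Norm{m_f}{L^{p,\infty}(\nu_p)}^p.
\end{equation*}
By Lemma \ref{lem:Poincares}, $\lip f$ is an upper gradient of $f$ with $L^p$ norm controlled as above. The identification of the Haj\l{}asz and Newtonian Sobolev spaces under the present assumptions (Remark \ref{rem:SobolevSpaces}, in its homogeneous form) then gives
\begin{equation*}
  \Norm{f}{\dot M^{1,p}(\mu)}\lesssim\Norm{\lip f}{L^p(\mu)}\lesssim\Norm{m_f}{L^{p,\infty}(\nu_p)}.
\end{equation*}

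For general $f\in L^1_{\loc}(\mu)$ with $m_f\in L^{p,\infty}(\nu_p)$, the approximations $\Phi_t f$ from \eqref{eq:Phi-tf} are locally Lipschitz by Lemma \ref{lem:PhitDiff}, so the preceding step combined with Lemma \ref{lem:mPhitf-mf} gives the uniform-in-$t$ bound
\begin{equation*}
  \Norm{\Phi_t f}{\dot M^{1,p}(\mu)}\lesssim\Norm{m_{\Phi_t f}}{L^{p,\infty}(\nu_p)}\lesssim\Norm{m_f}{L^{p,\infty}(\nu_p)}.
\end{equation*}
A routine Lebesgue-point argument, using doubling to compare $\fint_{B(x_i,t)}$ with $\fint_{B(x,Ct)}$ when $\rho(x,x_i)\lesssim t$, shows $\Phi_t f(x)\to f(x)$ at every Lebesgue point of $f$, so $\Phi_{t_n} f\to f$ $\mu$-a.e.\ along any $t_n\to 0$.

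To pass to the limit, select Haj\l{}asz upper gradients $h_{t_n}$ of $\Phi_{t_n} f$ with $\Norm{h_{t_n}}{L^p(\mu)}$ nearly realising the Haj\l{}asz norm of $\Phi_{t_n} f$. Since $p>1$, boundedness in $L^p(\mu)$ yields a weakly convergent subsequence $h_{t_n}\rightharpoonup h$ with $\Norm{h}{L^p(\mu)}\lesssim\Norm{m_f}{L^{p,\infty}(\nu_p)}$. Mazur's lemma produces convex combinations $\tilde h_k$ converging strongly in $L^p(\mu)$, and (after a further subsequence) $\mu$-a.e., to $h$; the matching convex combinations $\tilde f_k$ of the $\Phi_{t_n} f$ still converge $\mu$-a.e.\ to $f$ while preserving the Haj\l{}asz inequality
\begin{equation*}
  \abs{\tilde f_k(x)-\tilde f_k(y)}\leq(\tilde h_k(x)+\tilde h_k(y))\rho(x,y).
\end{equation*}
Passing to the pointwise limit on a full-measure set of pairs identifies $h$ as a Haj\l{}asz upper gradient of $f$, completing the proof. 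The main obstacle is precisely this weak-compactness step: reconciling weak $L^p$ convergence of the gradients with $\mu$-a.e.\ convergence of the functions, handled via Mazur and subsequence diagonalisation; the condition $p>1$ is essential here (and likewise in the equivalence invoked in the Lipschitz step).
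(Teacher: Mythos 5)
Your proposal is correct and reaches the same estimate, but the route differs from the paper's in two interesting ways, and a comparison is worthwhile.

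In the Lipschitz step, you invoke the identification $\dot N^{1,p}=\dot M^{1,p}$ (via Remark~\ref{rem:SobolevSpaces}) together with the fact that $\operatorname{\it Lip}f\leq\lip f$ is an upper gradient of a (locally) Lipschitz $f$. (Strictly speaking, the upper-gradient fact comes from Cheeger, cited inside the proof of Lemma~\ref{lem:Poincares}, rather than from the lemma's statement itself---a small citation slip.) This imports as a black box precisely the content that the paper rederives inline: the paper first applies the $(1,p-\eps)$-Poincar\'e inequality (available by Keith--Zhong, Theorem~\ref{thm:KZ}) and then the telescoping Lemma~\ref{lem:diff-mf} to manufacture the fractional maximal function $M_{p-\eps}g$ as a Haj\l{}asz upper gradient. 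The two arguments are ultimately the same mathematics, but the paper's version is self-contained and makes the dependence on Keith--Zhong explicit, while yours delegates both to the $N^{1,p}=M^{1,p}$ equivalence (which needs Keith--Zhong under the hood, so the hypothesis is indeed used, just implicitly).

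In the limiting step, your argument and the paper's both extract a weakly convergent subsequence of controlled gradients, but the objects and the mechanism for passing to the limit genuinely differ. You extract near-optimal Haj\l{}asz gradients $h_{t_n}$ of $\Phi_{t_n}f$, apply Mazur's lemma to obtain tail convex combinations converging strongly (hence a.e.) in $L^p$, note that convex combinations preserve the Haj\l{}asz inequality and that tail convex combinations of the $\Phi_{t_n}f$ still converge a.e.\ to $f$ at Lebesgue points, and pass to the limit pointwise. The paper instead works with $(\lip\Phi_{t_j}f)^{p-\eps}$ in the reflexive $L^{p/(p-\eps)}$, tests the weak limit against $1_{\lambda B}$ to pass the $(1,p-\eps)$-Poincar\'e inequality to the limit function $g$, and then re-invokes Lemma~\ref{lem:diff-mf} to turn $M_{p-\eps}g$ into a Haj\l{}asz gradient. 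Your Mazur route is a clean, standard piece of functional analysis and avoids the second use of Lemma~\ref{lem:diff-mf}; the paper's route avoids pointwise a.e.\ convergence of $\Phi_t f$ (only $L^1_{\loc}$ convergence is used) and keeps the gradient concrete throughout. Both are sound; yours is arguably more modular, the paper's more explicit.
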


\begin{proof}
We approximate $f$ by the locally Lipschitz functions $\Phi_t f$ from \eqref{eq:Phi-tf}.
These satisfy
\begin{equation}\label{eq:lipPhi<mf}
\begin{split}
  \Norm{\lip(\Phi_t f)}{L^p(\mu)}
  &\lesssim\Norm{g_{\Phi_t f}}{L^p(\mu)}\qquad\text{(Proposition \ref{prop:Lip-gf})} \\
  &\lesssim\Norm{m_{\Phi_t f}}{L^{p,\infty}(\nu_p)}\qquad\text{(Lemma \ref{lem:gfp<})} \\
  &\lesssim\Norm{m_{f}}{L^{p,\infty}(\nu_p)}\qquad\text{(Lemma \ref{lem:mPhitf-mf})}.
\end{split}
\end{equation}
By the Keith--Zhong Theorem \ref{thm:KZ}, $(X,\rho,\mu)$ satisfies the $(1,p-\eps)$-Poincar\'e inequality for some $\eps>0$. By estimate \eqref{eq:lipPhi<mf}, the functions $(\lip(\Phi_t f))^{p-\eps}$ form a bounded set in the reflexive space $L^{\frac{p}{p-\eps}}(\mu)$. Hence we can find a subsequence $(\lip(\Phi_{t_j} f))^{p-\eps}$, with $t_j\to 0$, that converges weakly in $L^{\frac{p}{p-\eps}}(\mu)$ to some function $h$, so that $g:=h^{\frac{1}{p-\eps}}\in L^p(\mu)$. From the properties of weak limits, we have
\begin{equation}\label{eq:g-mf}
\begin{split}
  \Norm{g}{L^p(\mu)}
  =\Norm{h}{L^{\frac{p}{p-\eps}}(\mu)}^{\frac{1}{p-\eps}} 
  &\leq\liminf_{j\to\infty}\Norm{(\lip(\Phi_{t_j} f))^{p-\eps}}{L^{\frac{p}{p-\eps}}(\mu)}^{\frac{1}{p-\eps}} \\
  &=\liminf_{j\to\infty}\Norm{\lip(\Phi_{t_j} f)}{L^p(\mu)} 
  \lesssim\Norm{m_f}{L^{p,\infty}(\nu_p)}.
\end{split}
\end{equation}

Since also $\Phi_{t_j} f\to f$ in $L^1_{\loc}(\mu)$, it follows that
\begin{equation*}
\begin{split}
    \frac{1}{r(B)} &\fint_B\abs{f-\ave{f}_B} \\
    &=\lim_{j\to\infty}\frac{1}{r(B)}\fint_B\abs{\Phi_{t_j}f-\ave{\Phi_{t_j}f}_B}\quad\text{since }\Phi_{t_j}f\to f\text{ in }L^1_{\loc}(\mu) \\
    &\lesssim\liminf_{j\to\infty}\Big(\fint_{\lambda B}(\lip\Phi_{t_j}f)^{p-\eps}\Big)^{\frac{1}{p-\eps}}\quad\text{by $(1,p-\eps)$-Poincar\'e} \\
    &=\Big(\fint_{\lambda B} g^{p-\eps}\Big)^{\frac{1}{p-\eps}}
\end{split}  
\end{equation*}
by the weak convergence $(\lip\Phi_{t_j}f)^{p-\eps}\rightharpoonup g^{p-\eps}$ in $L^{\frac{p}{p-\eps}}(\mu)$ and the observation that $1_{\lambda B}\in L^{\frac{p}{\eps}}(\mu)=(L^{\frac{p}{p-\eps}}(\mu))^*$. By Lemma \ref{lem:diff-mf}, we have, for all Lebesgue points $u_1,u_2\in X$ of $f$ at distance $\delta=\rho(u_1,u_2)$,
\begin{equation}\label{eq:diff-Mg}
\begin{split}
  \abs{f(u_1)-f(u_2)}
  &\leq  c_\mu \sum_{i=1}^2\Big[ m_f(u_i,3\delta)+\sum_{k=0}^\infty  m_f(u_i,2^{1-k}\delta)\Big] \\
  &\lesssim \sum_{i=1}^2\Big[ 3\delta\Big(\fint_{B(u_i,3\lambda\delta)} g^{p-\eps}\Big)^{\frac{1}{p-\eps}} \\
  &\qquad +\sum_{k=0}^\infty 2^{1-k}\lambda\delta \Big(\fint_{B(u_i,2^{1-k}\lambda\delta)} g^{p-\eps}\Big)^{\frac{1}{p-\eps}}\Big] \\
  &\leq\sum_{i=1}^2\Big(3+\sum_{k=0}^\infty 2^{1-k}\Big)\delta (M_{p-\eps}g)(u_i) \\
  &=7\rho(u_1,u_2)\sum_{i=1}^2 (M_{p-\eps}g)(u_i),
\end{split}
\end{equation}
where $M_{p-\eps}g:=[M(g^{p-\eps})]^{\frac{1}{p-\eps}}$ is the rescaled maximal function.

This shows that $c\cdot M_{p-\eps}g$, for some constant $c$ depending only on the parameters of the space, is a Haj\l asz upper gradient of $f$, and hence
\begin{equation*}
\begin{split}
  \Norm{f}{\dot M^{1,p}(\mu)}
  &\lesssim\Norm{M_{p-\eps}g}{L^p(\mu)}\qquad\text{(definition of $\dot M^{1,p}(\mu)$)} \\
  &\lesssim\Norm{g}{L^p(\mu)}\qquad\text{(maximal inequality)} \\
  &\lesssim\Norm{m_f}{L^{p,\infty}(\nu_p)}\qquad\text{(by \eqref{eq:g-mf})}.
\end{split}
\end{equation*}
This completes the proof.
\end{proof}

\section{The limit relation}\label{sec:limit}

Combining Propositions \ref{prop:MpLowerBd} and \ref{prop:MpUpperBd}, we obtain the norm equivalence \eqref{eq:CST} of Theorem \ref{thm:Rupert}. To complete the proof of the said theorem, it remains to establish the limit relation \eqref{eq:Rupert}. Since $\liminf$ is obviously bounded by $\sup$, we only need to consider the estimate ``$\lesssim$''.

We will need the following density result, which is most likely known. Since most sources deal with the non-homogeneous space $M^{1,p}$ instead of $\dot M^{1,p}$ (see e.g. \cite[Lemma 10.2.7]{HKST}), we provide the short proof for completeness.

\begin{proposition}\label{prop:LipDense}
Let $(X,\rho,\mu)$ be a doubling metric measure space.
Then Lip\-schitz functions are dense in $\dot M^{1,p}(\mu)$.
\end{proposition}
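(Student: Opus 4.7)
The plan is to approximate any $f \in \dot M^{1,p}(\mu)$ by McShane extensions of $f$ from sublevel sets of a Haj\l{}asz upper gradient. Fix $\eps > 0$ and choose a Haj\l{}asz upper gradient $h \in L^p(\mu)$ of $f$ with $\Norm{h}{L^p(\mu)} \leq \Norm{f}{\dot M^{1,p}(\mu)} + \eps$; after modifying $f$ and $h$ on a null set, we may assume that \eqref{eq:HajlaszUpper} holds for every pair $x, y \in X$. For $\lambda > 0$, set $E_\lambda := \{x \in X : h(x) \leq \lambda\}$; then \eqref{eq:HajlaszUpper} gives $\abs{f(x) - f(y)} \leq 2\lambda\rho(x,y)$ for $x, y \in E_\lambda$, so $f|_{E_\lambda}$ is $2\lambda$-Lipschitz, and the McShane formula
\begin{equation*}
   f_\lambda(x) := \inf_{y \in E_\lambda}\bigl(f(y) + 2\lambda\rho(x,y)\bigr)
\end{equation*}
defines a globally $2\lambda$-Lipschitz function on $X$ that coincides with $f$ on $E_\lambda$. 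Finiteness of the infimum follows because any fixed $y_0 \in E_\lambda$ and the Lipschitz property of $f|_{E_\lambda}$ give the lower bound $f(y)+2\lambda\rho(x,y) \geq f(y_0)-2\lambda\rho(x,y_0)$ uniformly in $y \in E_\lambda$; note that $E_\lambda \neq \varnothing$ for all sufficiently large $\lambda$, since $h$ is finite $\mu$-a.e.

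The key step is to verify that $h_\lambda := 4h \cdot 1_{X \setminus E_\lambda}$ is a Haj\l{}asz upper gradient of $f - f_\lambda$. Since $f - f_\lambda$ vanishes on $E_\lambda$, only pairs with at least one point outside $E_\lambda$ need to be checked. For $x \in X \setminus E_\lambda$ and $y \in E_\lambda$, combining $\abs{f(x) - f(y)} \leq \rho(x,y)(h(x) + \lambda)$ with $\abs{f_\lambda(x) - f(y)} = \abs{f_\lambda(x) - f_\lambda(y)} \leq 2\lambda\rho(x,y)$ gives $\abs{(f - f_\lambda)(x)} \leq \rho(x,y)(h(x) + 3\lambda)$; since $h(x) > \lambda$ on $X \setminus E_\lambda$, this is bounded by $4\rho(x,y)h(x) = \rho(x,y)\bigl(h_\lambda(x) + h_\lambda(y)\bigr)$. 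For $x, x' \in X \setminus E_\lambda$, the triangle inequality together with the $2\lambda$-Lipschitz property of $f_\lambda$ yields $\abs{(f - f_\lambda)(x) - (f - f_\lambda)(x')} \leq \rho(x,x')(h(x) + h(x') + 2\lambda)$, which is at most $\rho(x,x')\bigl(h_\lambda(x) + h_\lambda(x')\bigr)$ because $h(x), h(x') > \lambda$.

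To finish, observe that $\Norm{h_\lambda}{L^p(\mu)}^p = 4^p \int_{\{h > \lambda\}} h^p \ud\mu \to 0$ as $\lambda \to \infty$ by dominated convergence with integrable majorant $h^p$. Therefore $\Norm{f - f_\lambda}{\dot M^{1,p}(\mu)} \leq \Norm{h_\lambda}{L^p(\mu)} \to 0$, so the Lipschitz functions $f_\lambda$ are the desired approximants. The main obstacle is essentially bookkeeping: one must select representatives for which \eqref{eq:HajlaszUpper} holds pointwise rather than almost everywhere, calibrate the constant $4$ so that the single choice $h_\lambda$ supported in $X \setminus E_\lambda$ handles all three case distinctions above, and verify finiteness of the McShane infimum as indicated. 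No property of $(X,\rho,\mu)$ beyond measurability of $E_\lambda$ is used in the argument.
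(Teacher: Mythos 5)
Your proof is correct and takes essentially the same route as the paper's: truncate at a sublevel set $\{h \leq \lambda\}$ of a Haj\l{}asz gradient, apply the McShane extension to the $2\lambda$-Lipschitz restriction, and verify that a suitable multiple of $h\cdot 1_{\{h>\lambda\}}$ is a Haj\l{}asz upper gradient of the difference, which tends to zero in $L^p$ by dominated convergence. The paper phrases the error bound slightly differently (via a $\max$ over $\{x,y\}$ and a constant $2+c$ depending on the target space) and cites the McShane extension theorem abstractly rather than writing the explicit infimum formula, but these are cosmetic differences; your case analysis and the constant $4$ match the real-valued case of the paper exactly.
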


\begin{proof}
Let $h\in L^p(\mu)$ be a Haj\l asz upper gradient of $f\in\dot M^{1,p}$. Given $L>0$, consider the restriction $f|\{h\leq L\}$. For $x,y\in\{h\leq L\}$, we have
\begin{equation*}
  \abs{f(x)-f(y)}\leq \rho(x,y)(h(x)+h(y))\leq 2L \rho(x,y),
\end{equation*}
so this restriction is $2L$-Lipschitz. Hence it can be extended to a $cL$-Lipschitz function $\tilde f$ on all of $X$. (For real-valued functions, the classical McShane extension \cite[page 99]{HKST} preserves the Lipschitz constant, so we may take $c=2$; for $\C\simeq\R^2$ -valued functions, $c=2\sqrt{2}$ will fo \cite[Corollary 4.1.6]{HKST}. In fact, this extension is possible even for Banach space $V$-valued functions, but then the constant $c$ may also depend on the parameters of the domain space $X$ \cite[Theorem 4.1.12]{HKST}.)

Now consider $f':=f-\tilde f$. We claim that 
\begin{equation}\label{eq:Haj2show}
   \abs{f'(x)-f'(y)}\lesssim \rho(x,y)\max_{z\in\{x,y\}}h(z)1_{\{h(z)>L\}},
\end{equation}
which shows that $h':=c h\cdot 1_{\{h>L\}}$ is a Haj\l asz upper gradient of $f'$. If $x,y\in\{h\leq L\}$, then \eqref{eq:Haj2show} holds trivially in the form $0\leq 0$. Otherwise, we always have
\begin{equation*}
\begin{split}
  &\abs{f'(x)-f'(y)}
  \leq\abs{f(x)-f(y)}+\abs{\tilde f(x)-\tilde f(y)} \\
  &\quad\leq \rho(x,y)(h(x)+h(y))+\rho(x,y) cL
  =\rho(x,y)(h(x)+h(y)+cL),
\end{split}
\end{equation*}
If at least one of $x,y$ belongs to $\{h>L\}$, then $\max\{h(x),h(y),L\}=h(z)>L$ for some $z\in\{x,y\}$. Thus
\begin{equation*}
  h(x)+h(y)+cL\leq (2+c)h(z)=(2+c)h(z)1_{\{h(z)>L\}},
\end{equation*}
as required for \eqref{eq:Haj2show}. Thus
\begin{equation*}
  \Norm{f-\tilde f}{\dot M^{1,p}}\lesssim\Norm{h\cdot 1_{\{h>L\}}}{L^p(\mu)},
\end{equation*}
and the right-hand side converges to zero as $L\to\infty$ by dominated convergence.
\end{proof}

We can now prove the following result, which provides the last missing piece \eqref{eq:Rupert} of Theorem \ref{thm:Rupert}.

\begin{proposition}\label{prop:RupertLimit}
Let $(X,\rho,\mu)$ be a complete doubling metric measure space that supports the $(1,p)$-Poincar\'e inequality.
For $f\in\dot M^{1,p}(\mu)$, we have
\begin{equation*}
  \Norm{f}{\dot M^{1,p}(\mu)}\lesssim\liminf_{\kappa\to 0}\kappa\nu_p(\{m_f>\kappa\})^{1/p}.
\end{equation*}
\end{proposition}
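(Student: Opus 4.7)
The plan is to first establish the inequality for Lipschitz $f$, where the proposition becomes an immediate consequence of what has already been proved, and then pass to general $f \in \dot M^{1,p}(\mu)$ by Lipschitz approximation (Proposition \ref{prop:LipDense}), using the fact that $f \mapsto m_f$ is subadditive to control the error.

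For the Lipschitz case, since $f$ is Lipschitz, Proposition \ref{prop:Lip-gf} gives $\lip f(x) \lesssim g_f(x)$ for $\mu$-a.e.~$x$, and then Lemma \ref{lem:gfp<} yields
\[
  \int_X (\lip f)^p \ud\mu \lesssim \int_X g_f^p \ud\mu \lesssim \liminf_{\kappa\to 0}\kappa^p\nu_p(\{m_f>\kappa\}).
\]
To convert the $L^p$ bound on $\lip f$ into a bound on $\Norm{f}{\dot M^{1,p}(\mu)}$, I would repeat the last step of the proof of Proposition \ref{prop:MpUpperBd}: by Theorem \ref{thm:KZ} one has a $(1,p-\eps)$-Poincar\'e inequality, and then Lemma \ref{lem:diff-mf} applied to $f$ with $g=\lip f$ (as in \eqref{eq:diff-Mg}) shows that a constant multiple of $M_{p-\eps}(\lip f)$ is a Haj\l{}asz upper gradient of $f$. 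The vector-valued maximal inequality (applied to $(\lip f)^{p-\eps}\in L^{p/(p-\eps)}(\mu)$ with $p/(p-\eps)>1$) gives
\[
  \Norm{f}{\dot M^{1,p}(\mu)} \lesssim \Norm{M_{p-\eps}(\lip f)}{L^p(\mu)} \lesssim \Norm{\lip f}{L^p(\mu)}
  \lesssim \liminf_{\kappa\to 0}\kappa\,\nu_p(\{m_f>\kappa\})^{1/p}.
\]

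For general $f\in\dot M^{1,p}(\mu)$, Proposition \ref{prop:LipDense} supplies Lipschitz $f_n$ with $\Norm{f-f_n}{\dot M^{1,p}(\mu)}\to 0$, and Proposition \ref{prop:MpLowerBd} then gives $\Norm{m_{f-f_n}}{L^{p,\infty}(\nu_p)}\to 0$. The subadditivity $m_{f_n}(x,t)\leq m_f(x,t)+m_{f-f_n}(x,t)$ (immediate from $(f-g)-\ave{f-g}_B = (f-\ave{f}_B)-(g-\ave{g}_B)$) yields
\[
  \{m_{f_n}>\kappa\}\subseteq\{m_f>\kappa/2\}\cup\{m_{f-f_n}>\kappa/2\},
\]
and multiplying by $\kappa^p$ and taking $\liminf_{\kappa\to 0}$ (using $\liminf(a+b)\leq \liminf a+\limsup b$ and $\limsup_\kappa \kappa^p\nu_p(\{m_{f-f_n}>\kappa/2\}) \leq 2^p\Norm{m_{f-f_n}}{L^{p,\infty}(\nu_p)}^p$) gives
\[
  \liminf_{\kappa\to 0}\kappa^p\nu_p(\{m_{f_n}>\kappa\})
  \lesssim \liminf_{\kappa\to 0}\kappa^p\nu_p(\{m_f>\kappa\}) + \Norm{m_{f-f_n}}{L^{p,\infty}(\nu_p)}^p.
\]
Applying the Lipschitz case to $f_n$ on the left, and letting $n\to\infty$, so that $\Norm{f_n}{\dot M^{1,p}(\mu)}\to\Norm{f}{\dot M^{1,p}(\mu)}$ and the error term vanishes, delivers the claim.

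The main obstacle is precisely the passage from the Lipschitz case to the general case, because the quantity $\liminf_{\kappa\to 0}\kappa\,\nu_p(\{m_f>\kappa\})^{1/p}$ is not itself a quasi-norm and behaves poorly under triangle-type decompositions. The key trick is the asymmetric relation $\liminf(a_\kappa+b_\kappa)\leq \liminf a_\kappa+\limsup b_\kappa$, which lets us combine the weaker $\liminf$ quantity for $f$ with the stronger $\sup$ (i.e.~$L^{p,\infty}(\nu_p)$-norm) quantity for the error $f-f_n$; the latter is controlled by $\Norm{f-f_n}{\dot M^{1,p}(\mu)}$ thanks to the already-established Proposition \ref{prop:MpLowerBd}, closing the loop.
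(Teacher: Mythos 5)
Your proposal is correct and follows essentially the same route as the paper: Lipschitz approximation via Proposition~\ref{prop:LipDense}, the chain $\lip f_n\lesssim g_{f_n}$ (Proposition~\ref{prop:Lip-gf}) combined with Lemma~\ref{lem:gfp<} and the Keith--Zhong / maximal-function step to bound $\Norm{f_n}{\dot M^{1,p}(\mu)}$, the subadditivity $m_{f_n}\leq m_f+m_{f-f_n}$, and Proposition~\ref{prop:MpLowerBd} to absorb the error. The only cosmetic difference is that you isolate the Lipschitz case as a lemma and make the $\liminf(a_\kappa+b_\kappa)\leq\liminf a_\kappa+\limsup b_\kappa$ step explicit, whereas the paper reaches the same conclusion by replacing the error term with its $\kappa$-independent bound $\Norm{m_{f-f_n}}{L^{p,\infty}(\nu_p)}^p$ before taking the $\liminf$; both are equally valid.
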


\begin{proof}
By Proposition \ref{prop:LipDense}, we can find a sequence of Lipschitz functions $f_n$ such that $\Norm{f-f_n}{\dot M^{1,p}(\mu)}\to 0$.
We start with
\begin{equation}\label{eq:limStart}
  \Norm{f}{\dot M^{1,p}}
  \leq\Norm{f_n}{\dot M^{1,p}}+\Norm{f-f_n}{\dot M^{1,p}},\qquad f_n\in\Lip.
\end{equation}
By the Keith--Zhong Theorem \ref{thm:KZ}, the space supports the $(1,p-\eps)$-Poincar\'e inequality for some $\eps>0$, and this says that
\begin{equation*}
  m_{f_n}(x,t)\lesssim t\Big(\fint_{B(x,\lambda t)}(\lip f_n)^{p-\eps}\ud\mu\Big)^{\frac{1}{p-\eps}}.
\end{equation*}
Then, from a computation like \eqref{eq:diff-Mg}, we see that $c M_{p-\eps}(\lip f_n)$ is a Haj\l asz upper gradient of $f_n$, and hence
\begin{equation}\label{eq:fnMp<lim}
\begin{split}
  \Norm{f_n}{\dot M^{1,p}}
  &\lesssim\Norm{M_{p-\eps}(\lip f_n)}{L^p}\qquad\text{(definition of $\dot M^{1,p}$)} \\
  &\lesssim\Norm{\lip f_n}{L^p}\qquad\text{(maximal inequality)} \\
  &\lesssim\Norm{g_{f_n}}{L^p}\qquad\text{(Proposition \ref{prop:Lip-gf})} \\
  &\lesssim\liminf_{\kappa\to 0}\kappa\nu_p(\{m_{f_n}>\kappa\})^{1/p}\qquad\text{(Lemma \ref{lem:gfp<})}.
\end{split}
\end{equation}
Now $m_{f_n}\leq m_f+m_{f-f_n}$ and hence
\begin{equation*}
  \kappa^p\nu_p(\{m_{f_n}>\kappa\})
  \leq \kappa^p\nu_p(\{m_{f}>\kappa/2\})
  +\kappa^p\nu_p(\{m_{f-f_n}>\kappa/2\}),
\end{equation*}
where
\begin{equation*}
\begin{split}
  \kappa^p\nu_p(\{m_{f-f_n}>\kappa/2\})
  &\lesssim\Norm{m_{f-f_n}}{L^{p,\infty}(\nu_p)}^p\qquad\text{(definition of $L^{p,\infty}$ norm)} \\ 
  &\lesssim\Norm{f-f_n}{\dot M^{1,p}(\mu)}^p\qquad\text{(Proposition \ref{prop:MpLowerBd})}.
\end{split}
\end{equation*}
Thus
\begin{equation}\label{eq:mfn<mf+pert}
  \liminf_{\kappa\to 0}\kappa\nu_p(\{m_{f_n}>\kappa\})^{1/p}
  \lesssim \liminf_{\kappa\to 0}\kappa\nu_p(\{m_{f}>\kappa\})^{1/p}+\Norm{f-f_n}{\dot M^{1,p}(\mu)}^p.
\end{equation}
Combining the estimates, we obtain
\begin{equation*}
\begin{split}
  \Norm{f}{\dot M^{1,p}(\mu)}
  &\leq\Norm{f_n}{\dot M^{1,p}}+\Norm{f-f_n}{\dot M^{1,p}}\qquad\text{(by \eqref{eq:limStart})}\\
  &\lesssim\liminf_{\kappa\to 0}\kappa\nu_p(\{m_{f_n}>\kappa\})^{1/p}+\Norm{f-f_n}{\dot M^{1,p}}
  \qquad\text{(by \eqref{eq:fnMp<lim})}\\
  &\lesssim\liminf_{\kappa\to 0}\kappa\nu_p(\{m_{f}>\kappa\})^{1/p}+\Norm{f-f_n}{\dot M^{1,p}}
  \qquad\text{(by \eqref{eq:mfn<mf+pert})}.
\end{split}
\end{equation*}
Only the last term depends on $n$, so taking the limit $n\to\infty$ and noting that the last term vanishes in this limit by the choice of $f_n$, we obtain the claimed estimate.
\end{proof}

\section{Further remarks on the Poincar\'e inequality}\label{sec:final}

In the proof of our main Theorem \ref{thm:Rupert}, the Poincar\'e inequality was used both in its usual infinitesimal form and via the new macroscopic version derived from it in Theorem \ref{thm:macroPoincare}. In this final section, we complement this discussion by collecting some new equivalent characterisations of the Poincar\'e inequality.

\begin{proposition}\label{prop:newPoincare2}
Let $(X,\rho,\mu)$ be a doubling metric measure space, and $p\in[1,\infty)$. Then the following conditions are equivalent:
\begin{enumerate}[\rm(1)]
  \item\label{it:usualPP} $(X,\rho,\mu)$ supports the $(1,p)$-Poincar\'e inequality in the sense of Definition \ref{def:Poincare};
  \item\label{it:gfPP} all $f\in\Lip(X)$ satisfy \eqref{eq:uPoincare} with $g=g_f$, where $g_f$ is defined as in \eqref{eq:gf}.
\end{enumerate}
\end{proposition}

\begin{proof}
\eqref{it:usualPP}$\Rightarrow$\eqref{it:gfPP}: Under the assumption \eqref{it:usualPP}, Proposition \ref{prop:Lip-gf} guarantees the pointwise ($\mu$-a.e.) inequality $\lip f(x)\lesssim g_f(x)$. Combining this with a direct application of \eqref{it:usualPP}, it follows that
\begin{equation*}
  m_f(z,s)\lesssim s\Big(\fint_{B(z,\lambda s)}(\operatorname{lip}f)^p\ud\mu\Big)^{\frac1p}
  \lesssim s\Big(\fint_{B(z,\lambda s)}g_f^p\ud\mu\Big)^{\frac1p}.
\end{equation*}
This is exactly \eqref{it:gfPP}, and completes the proof of \eqref{it:usualPP}$\Rightarrow$\eqref{it:gfPP}.

\eqref{it:gfPP}$\Rightarrow$\eqref{it:usualPP}: This is immediate from the elementary pointwise bound $g_f(x)\leq 2\lip f(x)$ from Remark \ref{rem:gf-Lip}.
\end{proof}

\begin{proposition}\label{prop:newPoincare}
Let $(X,\rho,\mu)$ be a doubling metric measure space, and $p\in[1,\infty)$. Then the following conditions are equivalent:
\begin{enumerate}[\rm(1)]
  \item\label{it:usualP} $(X,\rho,\mu)$ supports the $(1,p)$-Poincar\'e inequality in the sense of Lemma \ref{lem:Poincares}\eqref{it:Lipf}, i.e., with an integral of $\Lip f$ on the right-hand side;
  \item\label{it:macroP} all $f\in L^1_{\loc}(\mu)$ satisfy the conclusion of Theorem \ref{thm:macroPoincare};
  \item\label{it:macroPL} all $f\in \Lip(X)$ satisfy the conclusion of Theorem \ref{thm:macroPoincare}.
\end{enumerate}
If, in addition, $X$ is complete, then all these are further equivalent to the $(1,p)$-Poincar\'e inequality in the sense of Definition \ref{def:Poincare}.
\end{proposition}

\begin{proof}
\eqref{it:usualP}$\Rightarrow$\eqref{it:macroP} is Theorem \ref{thm:macroPoincare}, taking into account Remark \ref{rem:macroPoincare} that only the version of the $(1,p)$-Poincar\'e inequality as in Lemma \ref{lem:Poincares}\eqref{it:Lipf}, with $\Lip f$ on the right, is actually used in the proof of Theorem \ref{thm:macroPoincare}.

\eqref{it:macroP}$\Rightarrow$\eqref{it:macroPL} is trivial.

\eqref{it:macroPL}$\Rightarrow$\eqref{it:usualP}:
Let $f\in\Lip(X)$. By assumption \eqref{it:macroPL}, we have
\begin{equation*}
  \frac{m_f(z,s)}{s}\leq \tilde c_P\Big(\fint_{B(z,\tilde\lambda s)} \Big(\frac{m_f(x,r)}{r}\Big)^p\ud\mu(x)\Big)^{\frac1p},
\end{equation*}
and hence
\begin{equation*}
  \fint_{B(z,\tilde\lambda s)}\Big[(2\Norm{f}{\Lip})^p-\Big(\frac{m_f(x,r)}{r}\Big)^p\Big]\ud\mu(x)
  \leq(2\Norm{f}{\Lip})^p-\Big(\frac{m_f(z,s)}{\tilde c_P\cdot s}\Big)^p.
\end{equation*}
From the computation in Remark \ref{rem:gf-Lip}, it follows that
\begin{equation}\label{eq:gf-Lip}
  \frac{m_f(x,r)}{r}
  \leq 2\sup_{\rho(y,x)\leq r}\frac{\abs{f(y)-f(x)}}{r}
  \leq 2\sup_{0<\rho(y,x)\leq r}\frac{\abs{f(y)-f(x)}}{\rho(x,y)}
  \leq 2\Norm{f}{\Lip},
\end{equation}
so in particular the integrand above is non-negative. Hence, we may apply Fatou's lemma to obtain
\begin{equation*}
\begin{split}
  (2\Norm{f}{\Lip})^p-\Big(\frac{m_f(z,s)}{\tilde c_P\cdot s}\Big)^p
  &\geq\liminf_{r\to 0}\fint_{B(z,\tilde\lambda s)}\Big[(2\Norm{f}{\Lip})^p-\Big(\frac{m_f(x,r)}{r}\Big)^p\Big]\ud\mu(x) \\
  &\geq\fint_{B(z,\tilde\lambda s)}\liminf_{r\to 0}\Big[(2\Norm{f}{\Lip})^p-\Big(\frac{m_f(x,r)}{r}\Big)^p\Big]\ud\mu(x) \\
  &=\fint_{B(z,\tilde\lambda s)}\Big[(2\Norm{f}{\Lip})^p-\limsup_{r\to 0}\Big(\frac{m_f(x,r)}{r}\Big)^p\Big]\ud\mu(x).
\end{split}
\end{equation*}
Thus
\begin{equation*}
\begin{split}
  \Big(\frac{m_f(z,s)}{\tilde c_P\cdot s}\Big)^p
  &\leq \fint_{B(z,\tilde\lambda s)}\limsup_{r\to 0}\Big(\frac{m_f(x,r)}{r}\Big)^p\ud\mu(x) \\
  &\leq \fint_{B(z,\tilde\lambda s)}\limsup_{r\to 0}\Big(2\sup_{0<\rho(x,y)\leq r}\frac{\abs{f(x)-f(y)}}{\rho(x,y)}\Big)^p\ud\mu(x)
  \quad\text{(by \eqref{eq:gf-Lip})} \\
  &=\fint_{B(z,\tilde\lambda s)}(2\Lip f(x))^p\ud\mu(x).
\end{split}
\end{equation*}
This is exactly the form of the $(1,p)$-Poincar\'e inequality in Lemma \ref{lem:Poincares}\eqref{it:Lipf}, and completes the proof of \eqref{it:macroPL}$\Rightarrow$\eqref{it:usualP}.

Finally, if $X$ is complete, then Lemma \ref{lem:Poincares} guarantees that \eqref{it:usualP} is equivalent to the $(1,p)$-Poincar\'e inequality in the sense of Definition \ref{def:Poincare}.
\end{proof}

Proposition \ref{prop:newPoincare} is somewhat in the spirit of \cite[Theorem 4.1]{DCJS}, which shows the equivalence of
\begin{enumerate}[\rm(a)]
 \item\label{it:DCJSold} a usual $(1,p)$-Poincar\'e inequality, and 
 \item\label{it:DCJSnew}a variant, where the right-hand side is an abstract functional $a_f(B)$ satisfying a list of postulates given in \cite[Definition 3.2]{DCJS}.
\end{enumerate}
However, a closer look shows that the similarity is only superficial. In \cite[Theorem 4.1]{DCJS}, the implication \eqref{it:DCJSold}$\Rightarrow$\eqref{it:DCJSnew} consists of showing that, denoting by $\mathfrak g_f$ a minimal $p$-weak upper gradient of $f$, the functional 
\begin{equation*}
  a_f(B):=C\cdot r(B)\Big(\fint_B \mathfrak g_f^p\ud\mu\Big)^{\frac1p}
\end{equation*}
satisfies the abstract postulates, so their ``new'' Poincar\'e inequality is just a version of the usual one (essentially, the one of Lemma \ref{lem:Poincares}\eqref{it:uppergrad}). In the other direction, where the usual Poincar\'e inequality is deduced from an abstract one, a key property of the abstract inequality is \cite[Definition 3.2($V_p$)]{DCJS}, which looks a bit like our macroscopic Poincar\'e inequality of Theorem \ref{thm:macroPoincare}, but involves a family of balls of arbitrary radius on the right-hand side, instead of the balls $B(x,r)$ of fixed radius $r$ as in Theorem \ref{thm:macroPoincare}. Hence, in both directions, \cite[Theorem 4.1]{DCJS} and Proposition \ref{prop:newPoincare} are essentially different.

As another detail, we also note that \cite[Theorem 4.1]{DCJS} only deals with complete spaces, while the main part of Proposition \ref{prop:newPoincare} is independent of this assumption.

%\section*{Declarations}
%
%On behalf of all authors, the corresponding author states that there is no conflict of interest.
%
%We do not analyse or generate any datasets, because our work proceeds within a theoretical and mathematical approach.

%\bibliography{metricW1p}

\begin{thebibliography}{10}

\bibitem{AMV:12}
R.~Alabern, J.~Mateu, and J.~Verdera.
\newblock A new characterization of {S}obolev spaces on {$\mathbb{R}^n$}.
\newblock {\em Math. Ann.}, 354(2):589--626, 2012.

\bibitem{Baudoin:24}
F.~Baudoin.
\newblock Korevaar-{S}choen-{S}obolev spaces and critical exponents in metric
  measure spaces.
\newblock {\em Ann. Fenn. Math.}, 49(2):487--527, 2024.

\bibitem{BBG}
F.~Baudoin, M.~Bonnefont, and N.~Garofalo.
\newblock A sub-{R}iemannian curvature-dimension inequality, volume doubling
  property and the {P}oincar\'e{} inequality.
\newblock {\em Math. Ann.}, 358(3-4):833--860, 2014.

\bibitem{BB:book}
A.~Bj\"orn and J.~Bj\"orn.
\newblock {\em Nonlinear potential theory on metric spaces}, volume~17 of {\em
  EMS Tracts in Mathematics}.
\newblock European Mathematical Society (EMS), Z\"urich, 2011.

\bibitem{BB:23}
A.~Bj\"orn and J.~Bj\"orn.
\newblock Sharp {B}esov capacity estimates for annuli in metric spaces with
  doubling measures.
\newblock {\em Math. Z.}, 305(3):Paper No. 41, 26, 2023.

\bibitem{BBS:22}
A.~Bj\"orn, J.~Bj\"orn, and N.~Shanmugalingam.
\newblock Extension and trace results for doubling metric measure spaces and
  their hyperbolic fillings.
\newblock {\em J. Math. Pures Appl. (9)}, 159:196--249, 2022.

\bibitem{BS:18}
M.~Bonk and E.~Saksman.
\newblock Sobolev spaces and hyperbolic fillings.
\newblock {\em J. Reine Angew. Math.}, 737:161--187, 2018.

\bibitem{BP:03}
M.~Bourdon and H.~Pajot.
\newblock Cohomologie {$l_p$} et espaces de {B}esov.
\newblock {\em J. Reine Angew. Math.}, 558:85--108, 2003.

\bibitem{BBM:01}
J.~Bourgain, H.~Brezis, and P.~Mironescu.
\newblock Another look at {S}obolev spaces.
\newblock In {\em Optimal control and partial differential equations}, pages
  439--455. IOS, Amsterdam, 2001.

\bibitem{BN:06}
J.~Bourgain and H.-M. Nguyen.
\newblock A new characterization of {S}obolev spaces.
\newblock {\em C. R. Math. Acad. Sci. Paris}, 343(2):75--80, 2006.

\bibitem{Brezis:02}
H.~Br\'ezis.
\newblock How to recognize constant functions. {A} connection with {S}obolev
  spaces.
\newblock {\em Uspekhi Mat. Nauk}, 57(4(346)):59--74, 2002.

\bibitem{BSVY}
H.~Brezis, A.~Seeger, J.~Van~Schaftingen, and P.-L. Yung.
\newblock Families of functionals representing {S}obolev norms.
\newblock {\em Anal. PDE}, 17(3):943--979, 2024.

\bibitem{BVY}
H.~Brezis, J.~Van~Schaftingen, and P.-L. Yung.
\newblock A surprising formula for {S}obolev norms.
\newblock {\em Proc. Natl. Acad. Sci. USA}, 118(8):Paper No. e2025254118, 6,
  2021.

\bibitem{Cheeger}
J.~Cheeger.
\newblock Differentiability of {L}ipschitz functions on metric measure spaces.
\newblock {\em Geom. Funct. Anal.}, 9(3):428--517, 1999.

\bibitem{CST}
A.~Connes, D.~Sullivan, and N.~Teleman.
\newblock Quasiconformal mappings, operators on {H}ilbert space, and local
  formulae for characteristic classes.
\newblock {\em Topology}, 33(4):663--681, 1994.
\newblock With an appendix in collaboration with S. Semmes.

\bibitem{DLYY}
F.~Dai, X.~Lin, D.~Yang, W.~Yuan, and Y.~Zhang.
\newblock Poincar\'e{} inequality meets {B}rezis-{V}an {S}chaftingen-{Y}ung
  formula on metric measure spaces.
\newblock {\em J. Funct. Anal.}, 283(9):Paper No. 109645, 52, 2022.

\bibitem{DMS:19}
S.~Di~Marino and M.~Squassina.
\newblock New characterizations of {S}obolev metric spaces.
\newblock {\em J. Funct. Anal.}, 276(6):1853--1874, 2019.

\bibitem{DCJS}
E.~Durand-Cartagena, J.~A. Jaramillo, and N.~Shanmugalingam.
\newblock First order {P}oincar\'e{} inequalities in metric measure spaces.
\newblock {\em Ann. Acad. Sci. Fenn. Math.}, 38(1):287--308, 2013.

\bibitem{FLL:23}
Z.~Fan, M.~Lacey, and J.~Li.
\newblock Schatten classes and commutators of {R}iesz transform on {H}eisenberg
  group and applications.
\newblock {\em J. Fourier Anal. Appl.}, 29(2):Paper No. 17, 28, 2023.

\bibitem{FLLX}
Z.~Fan, M.~Lacey, J.~Li, and X.~Xiong.
\newblock Schatten-{L}orentz characterization of {R}iesz transform commutator
  associated with {B}essel operators.
\newblock {\em J. Funct. Anal.}, 290(2):Paper No. 111233, 52, 2026.

\bibitem{FLMSZ}
Z.~Fan, J.~Li, E.~McDonald, F.~Sukochev, and D.~Zanin.
\newblock Endpoint weak {S}chatten class estimates and trace formula for
  commutators of {R}iesz transforms with multipliers on {H}eisenberg groups.
\newblock {\em J. Funct. Anal.}, 286(1):Paper No. 110188, 72, 2024.

\bibitem{FHK:99}
B.~Franchi, P.~Haj\l{}asz, and P.~Koskela.
\newblock Definitions of {S}obolev classes on metric spaces.
\newblock {\em Ann. Inst. Fourier (Grenoble)}, 49(6):1903--1924, 1999.

\bibitem{Frank}
R.~L. Frank.
\newblock A characterization of {$\dot W{}^{1,p}(\mathbb R^d)$}.
\newblock {\em Pure Appl. Funct. Anal.}, 9(1):53--68, 2024.

\bibitem{GKS:24}
R.~Gibara, R.~Korte, and N.~Shanmugalingam.
\newblock Solving a {D}irichlet problem on unbounded domains via a conformal
  transformation.
\newblock {\em Math. Ann.}, 389(3):2857--2901, 2024.

\bibitem{GKS:10}
A.~Gogatishvili, P.~Koskela, and N.~Shanmugalingam.
\newblock Interpolation properties of {B}esov spaces defined on metric spaces.
\newblock {\em Math. Nachr.}, 283(2):215--231, 2010.

\bibitem{GLW:23}
Z.~Gong, J.~Li, and B.~D. Wick.
\newblock Besov spaces, {S}chatten classes and weighted versions of the
  quantised derivative.
\newblock {\em Anal. Math.}, 49(4):971--1006, 2023.

\bibitem{Hajlasz:96}
P.~Haj\l{}asz.
\newblock Sobolev spaces on an arbitrary metric space.
\newblock {\em Potential Anal.}, 5(4):403--415, 1996.

\bibitem{HK:00}
P.~Haj\l{}asz and P.~Koskela.
\newblock Sobolev met {P}oincar\'e.
\newblock {\em Mem. Amer. Math. Soc.}, 145(688):x+101, 2000.

\bibitem{HXZ}
B.-X. Han, Z.-F. Xu, and Z.~Zhu.
\newblock A new characterization of {S}obolev spaces on {L}ipschitz
  differentiability spaces, 2025.
\newblock Preprint, arXiv:2504.16657.

\bibitem{HKT}
T.~Heikkinen, P.~Koskela, and H.~Tuominen.
\newblock Sobolev-type spaces from generalized {P}oincar\'{e} inequalities.
\newblock {\em Studia Math.}, 181(1):1--16, 2007.

\bibitem{Heinonen:book}
J.~Heinonen.
\newblock {\em Lectures on analysis on metric spaces}.
\newblock Universitext. Springer-Verlag, New York, 2001.

\bibitem{HKST}
J.~Heinonen, P.~Koskela, N.~Shanmugalingam, and J.~T. Tyson.
\newblock {\em Sobolev spaces on metric measure spaces}, volume~27 of {\em New
  Mathematical Monographs}.
\newblock Cambridge University Press, Cambridge, 2015.
\newblock An approach based on upper gradients.

\bibitem{HNVW1}
T.~Hyt\"{o}nen, J.~van Neerven, M.~Veraar, and L.~Weis.
\newblock {\em Analysis in {B}anach spaces. {V}ol. {I}. {M}artingales and
  {L}ittlewood-{P}aley theory}, volume~63 of {\em Ergebnisse der Mathematik und
  ihrer Grenzgebiete. 3. Folge}.
\newblock Springer, Cham, 2016.

\bibitem{Hyt:Sp}
T.~Hytönen.
\newblock Schatten properties of commutators on metric spaces, 2025.
\newblock Preprint, arXiv:2411.02613.

\bibitem{JW:82}
S.~Janson and T.~H. Wolff.
\newblock Schatten classes and commutators of singular integral operators.
\newblock {\em Ark. Mat.}, 20(2):301--310, 1982.

\bibitem{Keith:03}
S.~Keith.
\newblock Modulus and the {P}oincar\'e{} inequality on metric measure spaces.
\newblock {\em Math. Z.}, 245(2):255--292, 2003.

\bibitem{Keith:04}
S.~Keith.
\newblock A differentiable structure for metric measure spaces.
\newblock {\em Adv. Math.}, 183(2):271--315, 2004.

\bibitem{KZ:08}
S.~Keith and X.~Zhong.
\newblock The {P}oincar\'{e} inequality is an open ended condition.
\newblock {\em Ann. of Math. (2)}, 167(2):575--599, 2008.

\bibitem{KS:93}
N.~J. Korevaar and R.~M. Schoen.
\newblock Sobolev spaces and harmonic maps for metric space targets.
\newblock {\em Comm. Anal. Geom.}, 1(3-4):561--659, 1993.

\bibitem{KSS}
T.~Kumagai, N.~Shanmugalingam, and R.~Shimizu.
\newblock Finite dimensionality of {B}esov spaces and potential-theoretic
  decomposition of metric spaces.
\newblock {\em Ann. Fenn. Math.}, 50(1):347--369, 2025.

\bibitem{LPZ:JGA}
P.~Lahti, A.~Pinamonti, and X.~Zhou.
\newblock B{V} functions and nonlocal functionals in metric measure spaces.
\newblock {\em J. Geom. Anal.}, 34(10):Paper No. 318, 34, 2024.

\bibitem{LPZ:NA}
P.~Lahti, A.~Pinamonti, and X.~Zhou.
\newblock A characterization of {BV} and {S}obolev functions via nonlocal
  functionals in metric spaces.
\newblock {\em Nonlinear Anal.}, 241:Paper No. 113467, 14, 2024.

\bibitem{LXY}
J.~Li, X.~Xiong, and F.~Yang.
\newblock Schatten properties of {C}alder\'on-{Z}ygmund singular integral
  commutator on stratified {L}ie groups.
\newblock {\em J. Math. Pures Appl. (9)}, 188:73--113, 2024.

\bibitem{LMSZ}
S.~Lord, E.~McDonald, F.~Sukochev, and D.~Zanin.
\newblock Quantum differentiability of essentially bounded functions on
  {E}uclidean space.
\newblock {\em J. Funct. Anal.}, 273(7):2353--2387, 2017.

\bibitem{Nguyen}
H.-M. Nguyen.
\newblock Some new characterizations of {S}obolev spaces.
\newblock {\em J. Funct. Anal.}, 237(2):689--720, 2006.

\bibitem{Nguyen:08}
H.-M. Nguyen.
\newblock Further characterizations of {S}obolev spaces.
\newblock {\em J. Eur. Math. Soc. (JEMS)}, 10(1):191--229, 2008.

\bibitem{Nguyen:25}
H.-M. Nguyen.
\newblock Characterizations of the {S}obolev norms and the total variation via
  nonlocal functionals, and related problems.
\newblock {\em C. R. Math. Acad. Sci. Paris}, 363:1429--1455, 2025.

\bibitem{PP:04}
K.~Pietruska-Pa{\l}uba.
\newblock Heat kernels on metric spaces and a characterisation of constant
  functions.
\newblock {\em Manuscripta Math.}, 115(3):389--399, 2004.

\bibitem{RM:12}
A.~Ranjbar-Motlagh.
\newblock An integral type characterization of constant functions on
  metric-measure spaces.
\newblock {\em J. Math. Anal. Appl.}, 385(1):194--201, 2012.

\bibitem{RS:NWO}
R.~Rochberg and S.~Semmes.
\newblock Nearly weakly orthonormal sequences, singular value estimates, and
  {C}alderon-{Z}ygmund operators.
\newblock {\em J. Funct. Anal.}, 86(2):237--306, 1989.

\bibitem{Nages:00}
N.~Shanmugalingam.
\newblock Newtonian spaces: an extension of {S}obolev spaces to metric measure
  spaces.
\newblock {\em Rev. Mat. Iberoamericana}, 16(2):243--279, 2000.

\bibitem{Shimizu:25}
R.~Shimizu.
\newblock Characterizations of {S}obolev functions via {B}esov-type energy
  functionals in fractals.
\newblock {\em Potential Anal.}, 63(4):2121--2156, 2025.

\end{thebibliography}
%\bibliographystyle{abbrv}

\end{document}